\documentclass[11pt,reqno]{amsart}
\usepackage{amsmath,amsthm,amsfonts}

\usepackage[utf8]{inputenc}
\usepackage{fullpage}
\usepackage{bbm}
\usepackage{bm}
\usepackage{amssymb}
\usepackage{shuffle}
\usepackage{dirtytalk}
\usepackage{enumerate}
\usepackage{enumitem}
\usepackage{xcolor}
\usepackage{mathtools}
\usepackage{amsmath}
\usepackage{appendix}
\usepackage{comment}
\usepackage{soul}

\usepackage{amsxtra,setspace,xspace,lmodern,psfrag,color,latexsym}
\usepackage{pifont,mathrsfs,caption,microtype,accents}
\usepackage[bookmarks=true, colorlinks=true]{hyperref}
\hypersetup{urlcolor=blue,citecolor=red,linkcolor=black}
\usepackage{cancel}
\usepackage[normalem]{ulem}

\newcommand{\D}[0]{\mathcal{D}}
\newcommand{\M}[0]{\mathcal{M}}

\newcommand{\tr}[0]{\tilde{r}}
\newcommand{\teta}[0]{\tilde{\eta}}
\newcommand{\onabla}[0]{\overline{\nabla}}

\newcommand{\V}[0]{\mathcal{V}}
\newcommand{\R}[0]{\mathbb{R}}
\newcommand{\Rdminus}[0]{\mathbb{R}^d\backslash \{x\}}

\newcommand{\B}[0]{\mathcal{B}}
\renewcommand{\S}[0]{\mathcal{S}}
\renewcommand{\P}[0]{\mathcal{P}}

\newcommand{\X}[0]{\mathcal{X}}

\newcommand{\w}[0]{\omega}
\newcommand{\norm}[1]{\lVert #1 \rVert}

\newcommand{\dd}{\mathrm{d}}
\newcommand{\Rddiag}{{\R^{2d}_{\!\scriptscriptstyle\diagup}}}
\newcommand{\cMtv}{\mathcal{\M}_{\mathrm{TV}}}
\DeclareMathOperator{\TV}{{TV}}
\DeclareMathOperator{\AC}{AC}
\newcommand{\ACt}{\mathcal{AC}_T}

\DeclareMathOperator*{\esssup}{ess\,sup}
\DeclareMathOperator*{\supp}{supp}
\newcommand{\Rd}[0]{\R^d}

\newcommand{\Linfty}[0]{L_\mu^\infty(\Rd)}
\newcommand{\LinftyK}[0]{L_\mu^\infty(K)}
\newcommand{\Kepsminus}[0]{K^\varepsilon\backslash \{x\}}
\newcommand{\KminusKeps}[0]{K\backslash K^\varepsilon}
\newcommand{\Kdiag}{{K^2_{\!\scriptscriptstyle\diagup}}}
\DeclareMathOperator*{\argmax}{arg\,max}
\DeclareMathOperator*{\argmin}{arg\,min}
\newtheorem{definition}{Definition}[section]
\newtheorem{proposition}{Proposition}[section]
\newtheorem{lemma}{Lemma}[section]
\newtheorem{assumption}{Assumption}[section]
\newtheorem{corollary}{Corollary}[section]
\newtheorem{theorem}{Theorem}[section]
\newtheorem{remark}{Remark}[section]
\newtheorem{example}{Example}[section]

  {   \end{list} }
\newcounter{broj}

\setcounter{secnumdepth}{3}
\numberwithin{equation}{section}

\makeatletter
\@namedef{subjclassname@2020}{\textup{2020} Mathematics Subject Classification}
\makeatother

\title{Evolution equations on co-evolving graphs: long-time behaviour and the graph continuity equation}
\author{José Antonio Carrillo \and Antonio Esposito \and L\'aszl\'o Mikol\'as}
\address{J. A. Carrillo, L. Mikol\'as -- Mathematical Institute, University of Oxford, Woodstock Road, Oxford, OX2 6GG, United Kingdom.}
\email{carrillo@maths.ox.ac.uk}
\email{laszlomikolas@gmail.com}
\address{A. Esposito -- Department of Information Engineering, Computer Science and Mathematics, Università degli Studi dell'Aquila, Via Vetoio 1, Coppito, 67100 L'Aquila, Italy.}

\email{antonio.esposito3@univaq.it}

\begin{document}

\keywords{Co-evolving graphs, evolution on graphs, long-time behaviour, nonlocal equations}
\subjclass[2020]{35R02, 35A01, 35A02, 35A24, 35B40}







\begin{abstract}
We focus on evolution equations on co-evolving, infinite, graphs and establish a rigorous link with a class of nonlinear continuity equations, whose vector fields depend on the graphs considered. More precisely, weak solutions of the so-called graph-continuity equation are shown to be the push-forward of their initial datum through the flow map solving the associated characteristics' equation, which depends on the co-evolving graph considered. This connection can be used to prove contractions in a suitable distance, although the flow on the graphs requires a too limiting assumption on the overall flux. Therefore, we consider upwinding dynamics on graphs with pointwise and monotonic velocity and prove long-time convergence of the solutions towards the uniform mass distribution.
\end{abstract}

\maketitle

\section{Introduction}
In this manuscript we continue the study of evolutionary partial differential equations (PDEs) on \textit{co-evolving graphs} initiated in \cite{esposito_mikolas_2024}. There, we analysed a model describing the evolution of some quantity, e.g. mass, distributed on the vertices of a graph and flowing through its weighted edges. In this model, the dynamics of the mass are coupled with the dynamics of the edge-weights, meaning that the dynamics on the graph depends on those determining the graph itself. In many cases, the adaptive nature of \textit{co-evolving} graphs is better suited for modelling many real-life phenomena than their static or simply evolving counterparts --- when the edge weights also evolve in time, but their dynamics are not coupled with those on the graph. To mention a few examples, \textit{co-evolving} graphs have been useful in biology, modelling neuronal systems \cite{clopath2010connectivity} or physiological networks \cite{sawicki2022modeling}, in machine learning, for modelling the training process of deep neural networks \cite{triesch2005gradient}, and in epidemiological models, where individuals susceptible to some disease (the nodes) might want to break off the links (the edges) to other individuals because they are afraid of becoming infected \cite{demirel2017dynamics}. We refer the reader to the review~\cite{berner2023adaptive} for more applications of \textit{co-evolving} graphs.

In this article, we build on the setting of~\cite{Esposito_Pattachini_Schlichting_Slepcev,Esposito_on_a_class}, where the authors study the evolution of a (possibly negative) quantity on the vertices, a measure $\rho$ on a \textit{static} graph, defined as follows. 

\begin{definition}\label{def:intro_graph}
    A static graph is given by a pair $\mathcal{G}:= (\mu, \eta)$, where $\mu \in \M^{+}(\Rd)$ is a positive Radon measure whose support describes the set of vertices and $\eta \in C_b(\Rddiag)$ is the edge-weight function, continuous and bounded, on the possible set of edges $\Rddiag:= \{(x,y) \in \R^{2d} \ | \ x \neq y\}$.
\end{definition}
In view of the previous definition, self-loops are not allowed and, for example, any finite set of vertices, $\{x_1,\ldots,x_n\} \subset \R^{nd}$, corresponds to empirical measures such as $\mu^n=\frac{1}{n}\sum_{i=1}^n\delta_{x_i}$, where $\delta_{x_i}$ denotes a dirac-mass at each $x_i\in \Rd$, for $i=1,\ldots, n$. The same setting allows to easily represent large networks by taking, e.g., $\mu^\infty = \lim_{n\to \infty}\mu^n$, interpreting the limit in a suitable weak-sense. Natural examples of the edge-weight function $\eta$ are functions of the distance, that is $\eta = f(|x-y|)$. In particular, Definition~\ref{def:intro_graph} includes both finite and uncountably infinite weighted graphs.

In \cite{esposito_mikolas_2024}, some of the previous results are extended to the \textit{co-evolving} setting and it is studied the well-posedness of the following \textit{co-evolving} non-local conservation law  
\begin{equation}\label{eq:intro_ivp}
   \begin{split} 
    \partial_t \rho_t & = - \onabla \cdot F^\Phi[\mu, \eta_t ; \rho_t,V_t[\rho_t]],  
    \\
    \partial_t \eta_t & =  \w_t[\rho_t] -\eta_t, 
    \end{split}\tag{Co-NCL}
\end{equation}
where, for $t\in[0,T]$, the edge-weight $\eta_t$ is no longer static and its dynamics depend on the ``mass'' distribution on the vertices, $\rho_t$, through a (potentially non-local) function $\w$. As for the equation for $\rho$, it is a non-local continuity equation where $\onabla \cdot$ is the graph divergence (see Definition \ref{def:non_local_grad_div}), $F^\Phi$ is the flux, and $V$ is a solution dependent velocity. Differently from evolutions on $\Rd$, note that, on the graph, the mass is distributed on the vertices, whereas the velocity and the flux are defined on the edges. In particular, to define the flux we need a suitable function, $\Phi$, interpolating the mass at the vertices connected by a given edge to have edge-based quantities. For this reason, the flux depends on $\Phi$ as the notation $F^\Phi$ indicates, cf. Definition \ref{def:admissible_interpolation}. We refer the reader to Section~\ref{sec:preliminaries} for further details.

In this paper, we shall focus on the case where the measure defining the set of vertices, $\mu\in \P_2(\Rd)$, is a probability measure with finite second moments and $\rho \ll \mu$ with $r:= \frac{\dd \rho}{\dd \mu}$. Under suitable assumptions on $\Phi$ and $\eta$, system \eqref{eq:intro_ivp} can be rewritten as, for $\mu$-a.e. $x\in\Rd$,
\begin{equation}\label{eq:intro_euler}
   \begin{split} 
    \partial_t r_t(x) & = - \int_{\Rd\backslash\{x\}}\Phi(r_t(x),r_t(y); V_t[r_t](x,y))\eta_t(x,y)\dd\mu(y),
    \\
    \partial_t \eta_t & =  \w_t[r_t] -\eta_t,
    \end{split}\tag{Euler Co-NCL}
\end{equation}
where the equation for $r$ is a nonlinear Euler equation, motivating the label chosen. Starting from~\eqref{eq:intro_euler}, first we prove existence and uniqueness of solutions to~\eqref{eq:intro_euler} as curves in the space $C([0,T], {L^p_\mu(\Rd)})\times C([0,T], C_b(\Rddiag))$, $p=2,\infty$, equipped with a suitable distance we specify later in Section~\ref{sec:euler-co-ncl}. This extends the result in~\cite{esposito_mikolas_2024} to the $L^p_\mu$ setting, which is needed to study the two main aspects of this paper: the associated graph continuity equation and the long-time behaviour of its solutions. First, we shall see that, for $\rho\ll\mu$, we can link~\eqref{eq:intro_ivp} to a particular nonlinear continuity equation on $\R^{d+1}$ which we shall call the \textit{graph continuity equation}. More precisely, for a given pair $(r, \eta)$ solution of~\eqref{eq:euler} with base measure $\mu \in \P_2(\Rd)$, the measure $\sigma:= \delta_r \otimes \mu$ defined by 
\begin{equation}\label{eq:intro_mu_monokinetic}
   ( \delta_r\otimes \mu)(A \times B) = \int_{B}\delta_{r(x)}(A) \dd \mu(x), 
\end{equation}
on any Borel set $A\times B \in \B(\R\times\Rd)$, is a weak solution to the following continuity equation 
\begin{equation}\label{eq:intro_vlasov_equation}
\begin{cases}
  \partial_t \sigma + \partial_{\xi}(\sigma\X[\sigma, r, \eta]) = 0,
    \\
     \sigma_0 = \bar{\sigma} \in \P(\R \times \Rd),
\end{cases}\tag{graph-CE}
\end{equation}
where, for any $\sigma \in \P(\R\times\Rd)$, the velocity field is given by
\begin{equation}\label{eq:intro_mean_field}
\X[\sigma,r, \eta](t,\xi,x):=-\int_{\R \times \Rdminus}\Phi( \xi, \xi'; V_t[r_t](x,x'))\eta_t(x,x')\dd \sigma(\xi',x'), 
\end{equation}
and it depends on a solution $(r,\eta)$ to~$\eqref{eq:intro_euler}$.  We refer to~\eqref{eq:intro_vlasov_equation} as the \textit{graph-continuity equation} since the velocity field depends, indeed, on the solution of the co-evolving graph problem. By looking at the disintegrated version of~\eqref{eq:intro_vlasov_equation}, we prove that weak solutions can be constructed as the push-forward of the initial datum through the corresponding flow map. We embed~\eqref{eq:intro_euler} into~\eqref{eq:intro_vlasov_equation} in order to study the long-time asymptotics via contractions in a suitable distance, as in Section~\ref{sec:contraction}. Obtaining contractivity of the flow is, indeed, difficult and still open for solutions to~\eqref{eq:intro_ivp} in view of the non-standard possible Finslerian nature of the equation for $\rho$, cf.~\cite{Esposito_Pattachini_Schlichting_Slepcev}. Working with~\eqref{eq:intro_vlasov_equation}, we obtain contraction of the solutions, although the structure of the flow somehow requires a too strong condition on the overall flux which does not seem to be satisfactory, since it indicates that all vertices should loose mass, which is at odds with most examples in practice. While we can prove a theoretical result, we believe a more reasonable condition should be provided. With the aim of obtaining a better result in this direction, the second contribution of this manuscript is the study of the long-time behaviour of \eqref{eq:intro_euler} for the upwind interpolation and a particular type of velocity fields that we call \textit{pointwise} --- they can be written as $V_t[r](x,x') = V_t(r_t(x),r_t(x'))$, for any vertices $x,x' \in \Rd$ --- and \textit{monotonic} (see Definition \ref{def:monotonic_velocity}), which intuitively means that vertices will send mass to neighbours with less mass and gain mass from those with more mass. In view of the pointwise nature of the velocity fields, we obtain the well posedness of~\eqref{eq:intro_euler} for $r \in L^\infty_\mu(\Rd)$ by the Banach-fixed point theorem in Theorem \ref{thm:well_posedness_euler_infty}. We then move to the study of the long-time behaviour of \eqref{eq:intro_euler} in this framework. To this end, we restrict the co-evolving graph to be defined on a compact set $K \subset \Rd$ and fix the upwind interpolation $\Phi_{upwind}$. We obtain the long-time asymptotics by showing that the supremum norm of $r$ decreases monotonically in time, while the infimum increases montonically in time, resulting in a stabilisation of the dynamics at $\frac{M}{\mu(K)}$, the uniform mass distribution, where $\int_{\Rd}r_0(x)\dd\mu(x) = M$. As byproduct, we provide long-time asymptotics for the $\mu$-monokinetic solution of~\eqref{eq:intro_vlasov_equation}.

\medskip

Passing from~\eqref{eq:intro_euler}~to~\eqref{eq:intro_vlasov_equation} resembles the link between the graph-limit and the mean-field limit for interacting particle systems (IPSs) with labels, cf., e.g.,~\cite{paul2024microscopic} and the references therein. This approach, in a different framework with respect to the one presented in this paper, concerns IPSs with heterogeneous interactions, depending on the particles' identities. Thus, unlike traditional \textit{exchangeable} IPSs where the system is completely invariant under any permutation of the variables' labels, particles are no longer exchangeable. In this regard, a powerful way to keep track of the identities of the particles and their connections is to use weighted graphs. More precisely, the labels can be encoded into a vertex set $V:=\{1,\ldots,N\}$ and the heterogeneous interactions can be modelled by a family of interaction potentials $(W^{i,j})_{i,j=1}^N$ depending on which pair of particles $(i,j) \in E$ is being considered, where $E = V \times V$ is the set of edges. For example, one can consider the following IPS composed of particles $X_t^i \in \Rd$, with labels $i=1,\ldots, N$, evolving according to
\begin{equation}\begin{cases}\label{eq:agg_eq_particles_graph}
        \frac{d}{dt}X^i_t = -\frac{1}{N}\sum_{i\neq j}^N\kappa({i,j})\nabla f(X_t^i-X_t^j),
        \\
        X_0^i \in \Rd, \tag{Het-IPS}
\end{cases}
\end{equation}
where the family of potentials $(W^{i,j})_{i,j=1}^N$ is given by $W^{i,j}:=\kappa({i,j})f(X_t^i-X_t^j)$, and 
$\kappa:E \to \R$ encodes the differences in the interactions between the particles depending on their labels.
Therefore, the vertices of this graph can be viewed as a collection of point masses that move in space, with the trajectory of each point mass being influenced by the rest. We refer to this approach as the \textit{fixed-mass} approach, in contrast with the framework of this paper, where the positions of the masses are fixed, but the mass is allowed to evolve. We refer to the latter point of view as the \textit{evolving-mass} approach, which is useful, for instance, in applications to data science and machine learning, since data have usually the form of point clouds, which constitute the sets of vertices. These vertices are, indeed, fixed and the weighted edges can encode some measure of similarity, such as the distance between these points. In this context, graphs are a powerful tool to represent the data. In particular, dynamics on graphs can be used to detect meaningful clusters, as explained in~\cite{craig2021clustering}. Let us stress that in this context the position is fixed (vertices) whereas the ``mass'' is moving.

The study of IPSs on heterogeneous graphs, i.e. not all-to-all with homogeneous weights, can be traced back to the series of seminal papers by Medvedev~\cite{Medv14_SIAM,Medv14}, studying a nonlinear heat-equation on graphs and its continuum limit as the number of vertices (particles) goes to infinity. To the best of the authors' knowledge, these contributions were the first to rigorously justify the use of continuum evolution equations, called the \textit{graph limit}, to approximate dynamics on large dense graphs by using \textit{graphons}, a notion of limiting graph for sequences of dense graphs~\cite{lovasz2012large}. The graph limit describes the trajectory of an agent with a given label when the number of agents goes to infinity. The notion of graph limit in the context of IPSs has been studied extensively; we mention, e.g.,~\cite{ayi2023graph,Gkogkas_Kuehn_Xu_CMS,porat2023mean,el2023continuum,favre2024continuum,haskovec2024graph,BP_pairwise_competition}. Continuity equations similar to~\eqref{eq:intro_vlasov_equation} have been obtained in the study of the mean-field limit of IPSs with heterogeneous interactions, see for example~\cite{Kaliuzhnyi,ChibaI,ChibaII, Kuehn_graphops1, Gkogkas_graphop,Vlasov_digraph} for static graphs, although this list is not exhaustive. In the case of co-evolving graphs, we highlight the recent work~\cite{gkogkas2022mean_heterogeneous} on the Kuramoto model on a co-evolving graph, where the authors prove the mean-field limit and provide several examples both for dense as well as sparse graphs. These works rely on the recent theory of graph limits~\cite{lovasz2012large,Backhausz}. We also mention~\cite{ayi2024mean,paul2024microscopic} for recent reviews on the different approaches to defining the mean-field limit of heterogeneous IPS. In this sense, the first contribution in this manuscript can be viewed as an application of the ideas in ~\cite[Section 5.2.3]{paul2024microscopic} on how to obtain a continuity equation starting from the solution of a nonlinear Euler equation, although our setting is not included in~\cite{paul2024microscopic}. A common point in many of the aforementioned works and the one presented in this manuscript is the approach to obtain the well-posedness of the nonlinear continuity equation. Namely, the well-posedness is obtained by studying a family of characteristic equations parametrized by the vertices of the underlying graph extending the work of Neunzert,~\cite{Neunzert}. To the best of the author's knowledge, the first application of this approach in the context of all-to-all graphs can be found in~\cite{Lancellotti} and, in the case of heterogeneous graphs can be found in~\cite{Kaliuzhnyi}.

The approach we use in Section~\ref{sec:long_time_behaviour} to study the long-time behaviour for~\eqref{eq:intro_euler} follows similar ideas to the ones in~\cite{Nastassia_Long_Time}, where the authors study the long-time behaviour of large systems of interacting agents on time-evolving graphs.
Indeed, our result is linked to the problem of \textit{consensus formation} for systems of \textit{non-exchangeable} interacting particles. More precisely, this problem consists in determining whether there exists an element $x^\infty$ such that 
\[
\lim_{t \to \infty} |x_i(t) - x^\infty|= 0,
\]
for any agent $i \in \{1,\ldots,N\}$ in an IPS. In our case, the evolving-mass approach, we obtain that $x^\infty = \frac{M}{\mu(K)}$, the uniform mass distribution. Consensus formation has received a vast amount of attention, see for example \cite{Trelat_long_time, Trelat_Cucker_Smale_long_time, Ernesto_CS, Moreau_rev, Olfati-Saber} and the references therein. The literature on this topic is very extensive as techniques have to be adapted to the particular type of interaction and connectivity in each case. 
Our approach is close to the works that tackle the problem by obtaining \textit{diameter} decay estimates.  We obtain that the rate at which the \say{vertex with maximum mass} (\say{vertex with minimum mass}) loses (gains) mass depends on the connectivity of the system through the quantities $\eta_*$, the smallest connectivity for any vertex in the history of the co-evolving graph, and $\alpha'_*$, the smallest rate of outflow for the velocities. This is closely related to the \textit{scrambling-coefficient},~\cite{seneta1979coefficients}, obtained for example in~\cite{Nastassia_Long_Time}, a quantity which is positive if either a pair of agents is interacting or they follow a common third agent. Our condition is stronger, since we impose that the graph is strongly connected, i.e., $\eta_t >0$ for $t\geq 0$. This is due to the co-evolving nature of $\eta$ which is not present in most of the aforementioned works, where the connectivity of the topology is, at most, allowed to evolve in time, but not to depend on the dynamics of the agents, as in our case. Finding weaker conditions on $\eta$ yielding convergence to consensus could be achieved upon investigating different co-evolving dynamics. 

\medskip
The setting outlined in Definition \ref{def:intro_graph} was introduced in~\cite{Esposito_Pattachini_Schlichting_Slepcev} where the authors considered nonlocal dynamics on graphs driven by nonlocal interaction energies. Similar frameworks can be also found in~\cite{Garcia_Trillos_Slepcev_Continuum_Limit,Garcia_Trillos_Slepcev_variational} in static problems. The work~\cite{Esposito_Pattachini_Schlichting_Slepcev} was extended in different directions. In~\cite{Esposito_on_a_class}, the authors studied a generalisation of the dynamics by looking at a wider class of interpolation functions $\Phi$ (the ones considered in this paper), while~\cite{Esposito_Pattachini_Schlichting_Slepcev} focused on the upwind interpolation $\Phi_{upwind}$ (see Example \ref{ex:example_interpolations}), also studied in~\cite{ChTrTa18}. In~\cite{Heinze_Nonlocal_cross_interaction_systems_on_graphs}, the authors extended the framework of~\cite{Esposito_Pattachini_Schlichting_Slepcev} to the case of two interacting species on graphs. They allow for non-linear mobilities and consider a suitable modified $p$-Wasserstein quasi-metric for $p\in (1,\infty)$, generalising the metric defined in~\cite{Esposito_Pattachini_Schlichting_Slepcev}. The authors show that the multi-species dynamics also define a gradient flow of the corresponding interaction energy with respect to the generalised version of $p$-Wasserstein (quasi) metric on graphs. The study of nonlocal cross-interaction equations on graphs is continued by the same authors in~\cite{Heinze_energy_landscape}, where they investigate certain qualitative properties of solutions for finite graphs. Namely, the authors give conditions on the self-interaction and cross-interaction kernels leading to energy minimisers and give conditions for obtaining stationary states. Finally, they study the stability of stationary states, the formation of patterns and the effect of the connectivity $\eta$ on the resulting dynamics. They carry out these efforts analytically for small graphs of up to four vertices and present numerical results for graphs of up to one hundred vertices. In~\cite{esposito2023graphtolocal}, the authors studied a natural question posed in~\cite{Esposito_Pattachini_Schlichting_Slepcev}: does one recover the nonlocal interaction equation on $\Rd$ as the graph localises? In other words, as the range of connections between different vertices decreases, but the weight of each connecting edge increases, do the nonlocal dynamics on the graph converge to the dynamics on Euclidean space? In \cite{esposito2023graphtolocal}, the authors show that, as the graph localises, the obtained dynamics is the nonlocal interaction equation on Euclidean space, with a tensor mobility $\mathbb{T}$, depending on the geometry of the graph through the connectivity, $\eta$, and the base measure, $\mu$. Furthermore, the resulting PDE is a Riemannian gradient flow on $(\P_2(\Rd_{\mathbb{T}}),d_2)$, where $\Rd_{\mathbb{T}}$ is the $d$-dimensional Euclidean space endowed with a metric induced by $\mathbb{T}^{-1}$. This result establishes a link between the Finslerian gradient flow structure obtained in~\cite{Esposito_Pattachini_Schlichting_Slepcev} and the Riemannian gradient flow obtained in the limiting Euclidean case; see~\cite{Esposito_Graph_to_local_multispecies} for an extension to systems of $N$ nonlocal cross-interaction equations on graphs. In particular, graphs are space-discretisation alternative to tessellations. In this regard, for interesting discrete-to-continuum evolution problems, we refer the reader to~\cite{DisserLiero2015,forkertEvolutionaryGammaConvergence2020,hraivoronska2023diffusive,HraivoronskaSchlichtingTse2023}. Furthermore, we also mention~\cite{GLADBACH2020204,GladbachKopferMaasPortinale2023} for discrete optimal transport distances and their convergence to their continuous counterparts. Manuscripts dealing with evolutions on finite graphs are~\cite{maas2011gradient,Chow_Huang_Li,Mielke2011gradient}, where the concept of Wasserstein metric on finite graphs was introduced independently. Related nonlocal Wasserstein distances are studied in~\cite{SlepcevWarren2022}, as well as the recent~\cite{Warren_preprint_25} which focuses on a class of nonlocal diffusion equations driven by relative entropy. We also mention~\cite{Bianchi_PME_Graphs_CVPDE}, where the authors provide a well-posedness theory for generalised porous medium equation on infinite graphs.

\subsection*{Structure of the manuscript}
The rest of the manuscript is structured as follows. In section~\ref{sec:preliminaries} we recall relevant definitions and results from~\cite{Esposito_Pattachini_Schlichting_Slepcev,Esposito_on_a_class,esposito_mikolas_2024}.
Focusing on the case $\rho\ll\mu$, in section~\ref{sec:euler-co-ncl} we study the well-posedness of~\eqref{eq:intro_euler}, derived from~\eqref{eq:intro_ivp}. In Section \ref{sec:graph_CE}, we show how we obtain \eqref{eq:intro_vlasov_equation} from \eqref{eq:intro_euler}, prove its well-posedness and a stability estimate. We also discuss how this equation was obtained in an attempt to study the long-time behaviour of \eqref{eq:intro_ivp}. Considering the upwind interpolation, section~ \ref{sec:long_time_behaviour} focuses on the long-time behaviour of~\eqref{eq:intro_euler}, and~\eqref{eq:vlasov_equation} as consequence, for pointwise monotonic velocities.

\section{Preliminaries}\label{sec:preliminaries}

We shall focus on \textit{co-evolving} graphs, denoted by $\mathcal{G}_t := (\mu, \eta_t[\rho])$, in which the weight function is allowed to change in time depending on the mass configuration, $\rho$, defined on the vertices. 

Since it is useful in our analysis, we recall the notation and the results obtained in~\cite{Esposito_on_a_class, esposito_mikolas_2024}. There, the unknown $\rho \in \cMtv(\Rd)$ is a signed Radon measure with finite total variation and the set $\cMtv$ is equipped with the total variation norm, defined using the dual product between $C_0(\R^d)$ and $\mathcal{M}(\R^d)$. Furthermore, since the continuity equation induces mass preserving dynamics that do not increase the total variation, we have $\rho$ belongs to $\cMtv^M(\R^d):=\left\{\rho\in\cMtv(\R^d):|\rho|(\R^d)\leq M\right\}$. We shall deal with dynamics on a non-Euclidean setting, hence we recall the notion of gradient and divergence on co-evolving graphs, as in~\cite{Esposito_Pattachini_Schlichting_Slepcev}.
\begin{definition}[Nonlocal gradient and divergence]\label{def:non_local_grad_div}
For any $\phi: \mathbb{R}^d \rightarrow \mathbb{R}$, we define its nonlocal gradient ${\onabla} \phi: \Rddiag \rightarrow \mathbb{R}$ by
$$
{\onabla} \phi(x, y)=\phi(y)-\phi(x), \quad \text { for all }(x, y) \in \Rddiag .
$$
For any Radon measure $\boldsymbol{j} \in \mathcal{M}(\Rddiag)$, its nonlocal divergence ${\onabla} \cdot \boldsymbol{j} \in \mathcal{M}(\mathbb{R}^d)$ is defined as the adjoint of ${\onabla}$, i.e., for any $\phi: \mathbb{R}^d \rightarrow \mathbb{R}$ in $C_0(\R^d)$, there holds
$$
\begin{aligned}
\int_{\mathbb{R}^d} \phi \mathrm{d} {\onabla} \cdot \boldsymbol{j} & =-\frac{1}{2} \iint_\Rddiag {\onabla} \phi(x, y)\mathrm{d} \boldsymbol{j}(x, y) \\
& =\frac{1}{2} \int_{\mathbb{R}^d} \phi(x) \int_{\mathbb{R}^d \backslash\{x\}} (\mathrm{d} \boldsymbol{j}(x, y)-\mathrm{d} \boldsymbol{j}(y, x)) .
\end{aligned}
$$
In particular, for $\boldsymbol{j}$ antisymmetric, that is, $\boldsymbol{j} \in \mathcal{M}(\Rddiag)$ and $\dd\boldsymbol{j}(x,y)=-\dd\boldsymbol{j}(y,x)$, denoted $\boldsymbol{j} \in$ $\mathcal{M}^{\mathrm{as}}(\Rddiag)$, we have
$$
\int_{\mathbb{R}^d} \phi \mathrm{d} {\onabla} \cdot \boldsymbol{j}=\iint_\Rddiag \phi(x)\mathrm{d} \boldsymbol{j}(x, y) \ .
$$
\end{definition}
The solutions to the nonlocal continuity equations are curves on a time interval $[0,T]$, with $T>0$, and we denote by $\AC([0,T];\cMtv(\R^d))$ the set of curves from $[0,T]$ to $\cMtv(\R^d)$ absolutely continuous with respect to the $\TV$ norm, that is, the set of curves $\rho\colon [0,T] \to \cMtv(\R^d)$
such that there exists $m\in L^1([0,T])$ with
\begin{equation*}
  \norm{\rho_t-\rho_s}_{\TV} \leq \int_s^t m(r) \dd r, \qquad \text{for all } 0\leq s< t\leq T. 
\end{equation*} 
The definition of the flux requires further considerations in the graph setting since the mass is a vertex based quantity, while the velocity is an edge based quantity. It is necessary to interpolate the mass at two given adjacent vertices to create an edge based quantity according to an admissible flux interpolation. The definition of admissible flux interpolations and fluxes follows~\cite{Esposito_on_a_class,esposito_mikolas_2024}. 

\begin{definition}[Admissible flux interpolation]\label{def:admissible_interpolation}  A measurable function $\Phi: \mathbb{R}^3 \rightarrow \mathbb{R}$ is called an admissible flux interpolation provided that the following conditions hold:
\begin{enumerate}[label=(\roman*)]
    \item\label{ass:interp_deg} $\Phi$ satisfies
$$
\Phi(0,0 ; v)=\Phi(a, b ; 0)=0, \quad \text { for all } a, b, v \in \mathbb{R} ;
$$
 \item\label{ass:interp_lip} $\Phi$ is argument-wise Lipschitz in the sense that, for some $L_{\Phi}>0$, any a, $b, c, d, v, w \in$ $\mathbb{R}$, it holds
$$
\begin{aligned}
|\Phi(a, b ; w)-\Phi(a, b ; v)| & \leq L_{\Phi}(|a|+|b|)|w-v|; 
\\
|\Phi(a, b ; v)-\Phi(c, d ; v)| & \leq L_{\Phi}(|a-c|+|b-d|)|v|;
\end{aligned}
$$
\item $\Phi$ is positively one-homogeneous in its first and second arguments, that is, for all $\alpha>0$ and $(a, b, w) \in \mathbb{R}^3$, it holds
$$
\Phi(\alpha a, \alpha b ; w)=\alpha \Phi(a, b ; w).
$$
\end{enumerate}
\end{definition}

\begin{example}\label{ex:example_interpolations}
A particularly useful interpolation, especially for potential vector fields, is the upwind interpolation, used, e.g., in~\cite{Esposito_Pattachini_Schlichting_Slepcev},
\[
\Phi_{\text {upwind }}(a, b ; w)=a w_{+}-b w_{-}, \qquad \mbox{ for } (a,b,w)\in\R^3.
\]
Mean multipliers represent another example, given by
\[
\Phi_{\text {prod }}(a, b ; w)=\phi(a, b)w, \qquad \mbox{ for } (a,b,w)\in\R^3,
\]
where common choices for $\phi$ include: $\phi(a,b) = \frac{a+b}{2}$, or $\phi(a,b) = \max\{a,b\}$.
\end{example}

An admissible flux is defined as follows.

\begin{definition}[Admissible flux]\label{def:flux} Let $\Phi$ be an admissible flux interpolation, and let $\rho \in \mathcal{M}_{\mathrm{TV}}(\mathbb{R}^d)$, $w \in \mathcal{V}^{\mathrm{as}}(\Rddiag):=\left\{v: \Rddiag \rightarrow \mathbb{R}| v(x,y)=-v(y,x)\right\}$, and $\eta:\Rddiag\to \R$ measurable. Furthermore, take {a reference measure} $\lambda \in \mathcal{M}^+(\mathbb{R}^{2 d})$ such that $\rho \otimes \mu, \mu \otimes \rho \ll \lambda$. Then, the admissible flux $F^{\Phi}[\mu, \eta ; \rho, w] \in \mathcal{M}(\Rddiag)$ at $(\rho, w)$ is defined by 
$$
\mathrm{d} F^{\Phi}[\mu, \eta ; \rho, w]=\Phi\left(\frac{\mathrm{d}(\rho \otimes \mu)}{\mathrm{d} \lambda}, \frac{\mathrm{d}(\mu \otimes \rho)}{\mathrm{d} \lambda} ; w\right) \eta\, \mathrm{d} \lambda .
$$
\end{definition}
The one-homogeneity of $\Phi$ implies the definition does not depend of the choice of $\lambda$, as long as the absolute continuity requirement is considered, cf.~\cite[Remark 2.6]{Esposito_on_a_class}. The \textit{co-evolving graphs}, $\mathcal{G}_t$, are determined by the following initial value problem introduced in~\cite{Esposito_on_a_class}
\begin{equation}\label{eq:ivp}
   \begin{split} 
    \partial_t \rho_t & = - \onabla \cdot F^\Phi[\mu, \eta_t ; \rho_t,V_t[\rho_t]],  
    \\
    \partial_t \eta_t & =  \w_t[\rho_t] -\eta_t, 
    \end{split}\tag{Co-NCL}
\end{equation}
for given initial data $\rho^0 \in \cMtv^M(\R^d)$ and $\eta^0\in C_{b}(\Rddiag)$. Focusing on the dynamics for $\eta$, we consider $\w:[0,T]\times \cMtv(\Rd) \times \Rddiag \to \R$ satisfying the following assumptions: 
\begin{enumerate}[label=$(\bm{\w}\arabic*)$]
    \item the map $(x,y)\in\Rddiag  \mapsto \w_t[\cdot](\cdot,x,y)$ is continuous and $(t\mapsto\w_t[\cdot](\cdot,\cdot))\in L^1([0,T])$;\label{ass:w_continuous}
    \item for any $\rho, \sigma \in \cMtv(\R^d)$ there exists a constant $L_\w\geq 0$ such that \label{ass:omega_lip}
     \begin{align*}
         \sup_{t \in [0,T]}\sup_{x,y \in \Rddiag} | \w_t[\sigma](x,y) - \w_t[\rho](x,y)| & \leq L_\w\norm{\sigma - \rho}_{TV}; 
     \end{align*}
     \item $\w$ is bounded, that is, there exists a constant $C_\w>0$ such that \label{ass:omega_bounded}
     \begin{equation*}
    \sup_{t \in [0,T]}\sup_{\rho \in \cMtv(\R^d)}\sup_{x,y \in \Rddiag}\big|\w_t[\rho](x,y)\big| \le C_\w.
\end{equation*}
\item The map $(x,y) \mapsto \w_\cdot[\cdot](x,y)$ is symmetric. \label{ass:omega_symmetric}
\end{enumerate}
We observe that assumption~\ref{ass:omega_symmetric} is not required in~\cite{esposito_mikolas_2024} since it is not needed for the well posedness of~\eqref{eq:ivp}. However, in what follows, we will require $\eta$ to be symmetric, hence~\ref{ass:omega_symmetric} is necessary.
A solution to~\eqref{eq:ivp} is understood as follows. 

\begin{definition}[Solution to the initial value problem \eqref{eq:ivp}]\label{def:sol_to_ivp}
   Given an admissible flux interpolation $\Phi$, a velocity field $V:[0,T]\times \cMtv(\R^d)\to \mathcal{V}^{as}(\Rddiag)$, and function $\w:[0,T]\times \cMtv(\R^d)\times \Rddiag \to \R$, a pair $(\rho, \eta): [0,T] \to \cMtv(\R^d)\times  C_{b}(\Rddiag)$ is a solution to the initial value problem \eqref{eq:ivp} if, for any $\varphi \in C_0(\Rd)$, 
   \begin{enumerate}[label=(\roman*)]
       \item $\rho \in AC([0,T], \cMtv(\R^d)),\ \eta \in AC([0,T],  C_{b}(\Rddiag))$; \label{cond:sol_1}
       \item the maps $t \mapsto \langle \varphi,\onabla \cdot F^\Phi[\mu, \eta_t ; \rho_t,V_t[\rho_t]]\rangle$ and $t \mapsto  \w_t[\rho_t] -\eta_t$ belong to $L^1([0,T])$;  \label{cond:sol_2}
       \item for a.e. $t \in [0,T]$, every $(x,y)\in\Rddiag$,  for any $\varphi\in C_0(\R^d)$, the following conditions hold \label{cond:sol_3}
       \begin{align}
           \int_{\R^d}\varphi(x)\dd \rho_t(x)  & = \int_{\Rd}\varphi(x)\dd \rho_0(x) + \frac{1}{2} \int_{0}^t \iint_{\Rddiag}\onabla\varphi(x,y)\dd F^\Phi[\mu,\eta_s,\rho_s;V_s[\rho_s]](x,y) \dd s\label{eq:rho_evolution}\ ,
           \\
           \eta_t(x,y) &= \eta_0(x,y)  + \int_0^t  \left(\w_s[\rho_s](x,y) -\eta_s(x,y)\right) \,\dd s.
           \label{eq:eta_evolution}
       \end{align}
   \end{enumerate}
\end{definition}

We recall the main results needed to ensure well-posedness of \eqref{eq:ivp} from \cite{esposito_mikolas_2024}. 

\begin{theorem}[Existence and uniqueness for~\eqref{eq:ivp}]\label{thm:well-posedness} Let $V:[0,T]\times \cMtv^M(\R^d)\to \V^{as}(\Rddiag)$ satisfy the uniform compressibility assumption
\begin{equation}\label{eq:velocity_bound}
  \sup _{t \in[0, T]} \sup _{\rho \in \M_{TV}^M(\mathbb{R}^d)} \sup _{x \in \mathbb{R}^d} \int_{\mathbb{R}^d \backslash\{x\}}\left|V_t[\rho](x, y)\right| \mathrm{d} \mu(y)  \leq C_V\ ,
\end{equation}
for $C_V>0$. Assume there is a constant $L_V > 0$ such that, for all $t \in [0,T]$ and all $\rho, \sigma \in \M_{TV}^M(\R^d)$:
\begin{equation}\label{eq:ass_Lip_rho}
         \sup _{x \in \mathbb{R}^d} \int_{\mathbb{R}^d \backslash\{x\}}\left|V_t[\rho](x, y)-V_t[\sigma](x, y)\right| \mathrm{d} \mu(y)  \leq L_V\|\rho-\sigma\|_{\mathrm{TV}} \ .
\end{equation}
Let $\w:[0,T]\times\cMtv(\Rd)\times\Rddiag\to\R$ satisfy~\ref{ass:w_continuous}---\ref{ass:omega_bounded}. Then there exists a unique solution $(\rho,\eta)$ to \eqref{eq:ivp} such that $(\rho_0,\eta_0)=(\rho^0,\eta^0)$.
\end{theorem}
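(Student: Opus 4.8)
The plan is to decouple the two equations by eliminating $\eta$ first and then to solve the resulting closed problem for $\rho$ by a Banach fixed-point argument. For a fixed curve $\rho\in C([0,T];\cMtv^M(\R^d))$ the second line of~\eqref{eq:ivp} is a linear inhomogeneous ODE in the Banach space $C_b(\Rddiag)$, solved explicitly by the Duhamel formula
\[
\eta_t=\mathcal{E}_t[\rho]:=e^{-t}\eta^0+\int_0^t e^{-(t-s)}\,\w_s[\rho_s]\,\dd s .
\]
Assumption~\ref{ass:w_continuous} makes $t\mapsto\mathcal{E}_t[\rho]$ well defined and continuous; \ref{ass:omega_bounded} gives $\sup_{t\in[0,T]}\norm{\mathcal{E}_t[\rho]}_\infty\le\norm{\eta^0}_\infty+C_\w$; \ref{ass:omega_symmetric} keeps $\mathcal{E}_t[\rho]$ symmetric; and \ref{ass:omega_lip} yields $\sup_{t\in[0,T]}\norm{\mathcal{E}_t[\rho]-\mathcal{E}_t[\sigma]}_\infty\le L_\w\sup_{t\in[0,T]}\norm{\rho_t-\sigma_t}_{\TV}$. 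Hence $\eta=\mathcal{E}[\rho]$ is a bounded, symmetric, Lipschitz function of $\rho$, and~\eqref{eq:ivp} is equivalent to finding $\rho$ with $\rho_0=\rho^0$ solving the single nonlocal continuity equation $\partial_t\rho_t=-\onabla\cdot F^\Phi[\mu,\mathcal{E}_t[\rho];\rho_t,V_t[\rho_t]]$.

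Next I would introduce, on the complete metric space $C([0,T];\cMtv^M(\R^d))$ (a closed ball of the Banach space $\cMtv(\R^d)$ with the $\TV$ norm), the map $\Gamma[\rho]:=\mathcal{R}[\mathcal{E}[\rho]]$, where $\mathcal{R}[\eta]$ denotes the unique solution of the nonlocal continuity equation with the \emph{prescribed} edge weight $\eta$, i.e. $\partial_t\tilde\rho_t=-\onabla\cdot F^\Phi[\mu,\eta_t;\tilde\rho_t,V_t[\tilde\rho_t]]$, $\tilde\rho_0=\rho^0$; a fixed point $\rho^*$ of $\Gamma$, together with $\eta^*:=\mathcal{E}[\rho^*]$, is exactly a solution of~\eqref{eq:ivp} in the sense of Definition~\ref{def:sol_to_ivp}. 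Making $\mathcal{R}$ meaningful requires the well-posedness of this inner, prescribed-$\eta$ problem, obtained by a short-time contraction (or Picard iteration) in $C([0,\tau];\cMtv(\R^d))$, using that $\rho\mapsto\onabla\cdot F^\Phi[\mu,\eta_t;\rho,V_t[\rho]]$ is Lipschitz from $\cMtv^M$ into $\cMtv(\R^d)$ with bounded image — which uses the argument-wise Lipschitzness and one-homogeneity of $\Phi$ from Definition~\ref{def:admissible_interpolation}\ref{ass:interp_lip} together with~\eqref{eq:velocity_bound}--\eqref{eq:ass_Lip_rho} — exactly as in~\cite{Esposito_on_a_class,esposito_mikolas_2024}. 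The one structural point is the a priori bound $\norm{\tilde\rho_t}_{\TV}\le M$, which keeps the ball $\cMtv^M$ invariant: it follows from mass conservation (antisymmetry of $F^\Phi$ forces $\onabla\cdot F^\Phi$ to have zero mass) combined with the degeneracy $\Phi(a,b;0)=0$ of Definition~\ref{def:admissible_interpolation}\ref{ass:interp_deg} and a differential inequality for $t\mapsto\norm{\tilde\rho_t}_{\TV}$ obtained by testing against a suitable sign-type function approximating the Jordan decomposition of $\tilde\rho_t$.

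I would then quantify the dependence of $\mathcal{R}$ on the edge weight: subtracting the equations satisfied by $\mathcal{R}[\eta]$ and $\mathcal{R}[\sigma]$, testing against a sign-type function and applying Grönwall gives $\sup_{t\in[0,T]}\norm{\mathcal{R}_t[\eta]-\mathcal{R}_t[\sigma]}_{\TV}\le C\sup_{t\in[0,T]}\norm{\eta_t-\sigma_t}_\infty$ with $C=C(M,C_V,L_V,L_\Phi,T)$, the cancellations again relying on the argument-wise Lipschitzness and one-homogeneity of $\Phi$ and on $\eta$ entering multiplicatively while $V$ is controlled uniformly on $\cMtv^M$ by~\eqref{eq:velocity_bound}. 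Combined with the Lipschitz estimate for $\mathcal{E}$ from the first step, $\Gamma=\mathcal{R}\circ\mathcal{E}$ is Lipschitz with constant $\le C L_\w$, which becomes strictly below $1$ on a short enough interval $[0,\tau]$ — equivalently, globally on $[0,T]$ in a Bielecki-type weighted norm $\sup_{t\in[0,T]}e^{-\lambda t}(\cdots)$ for $\lambda$ large. Banach's fixed-point theorem gives a unique local solution; since the a priori bounds $\norm{\rho_t}_{\TV}\le M$ and $\norm{\eta_t}_\infty\le\norm{\eta^0}_\infty+C_\w$ are uniform in the starting time, $\tau$ is independent of the datum and the solution extends to $[0,T]$ by concatenation, with uniqueness propagating. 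It then remains to verify Definition~\ref{def:sol_to_ivp}: $\rho,\eta\in\AC$ and the $L^1$-in-time integrability of $t\mapsto\langle\varphi,\onabla\cdot F^\Phi[\mu,\eta_t;\rho_t,V_t[\rho_t]]\rangle$ and of $t\mapsto\w_t[\rho_t]-\eta_t$ follow from the flux bound (via Definition~\ref{def:admissible_interpolation}\ref{ass:interp_lip} and~\eqref{eq:velocity_bound}) and from~\ref{ass:w_continuous} plus the bound on $\eta$, while~\eqref{eq:rho_evolution}--\eqref{eq:eta_evolution} are precisely the integrated identities defining $\mathcal{R}$ and $\mathcal{E}$.

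The main obstacle is the analysis of the inner problem: proving well-posedness of the prescribed-$\eta$ nonlocal continuity equation uniformly over $\rho\in\cMtv^M$ and its Lipschitz dependence on $\eta$ in a topology compatible with the $\TV$ metric. Since $F^\Phi$ is only argument-wise Lipschitz rather than smooth, and $V$ enters nonlinearly, these estimates are delicate and are exactly what the admissibility conditions of Definition~\ref{def:admissible_interpolation} and the structural hypotheses~\eqref{eq:velocity_bound}--\eqref{eq:ass_Lip_rho} are designed to make work; this is the step where one leans on (or reproduces) the arguments of~\cite{Esposito_on_a_class,esposito_mikolas_2024}. A secondary, more bookkeeping, difficulty is the invariance of the ball $\cMtv^M$, which forces the a priori $\TV$ estimate to be proved before, not after, the fixed-point iteration.
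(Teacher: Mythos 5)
Your proposal is correct in substance but proceeds by a genuinely different decomposition than the paper's (cited) argument. The paper treats \eqref{eq:ivp} as a \emph{coupled} fixed-point problem: one builds a solution map $\S=(\S_V,\S_\w)$ acting on pairs $(\rho,\eta)\in C([0,T];\cMtv^M(\R^d))\times C([0,T];C_b(\Rddiag))$ and runs Banach's theorem in the product metric $\tilde{d}_\infty$ directly, exactly as is done later in Theorem~\ref{thm:well_posedness_euler} for the Euler formulation (this is what \cite[Lemma 3.3, Theorem 3.6]{esposito_mikolas_2024} do). You instead \emph{decouple}: you solve the linear $\eta$-equation first via Duhamel, $\eta=\mathcal{E}[\rho]$, and then run a fixed point in $\rho$ alone through the composite map $\Gamma=\mathcal{R}\circ\mathcal{E}$. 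The paper does in fact record the same Duhamel representation (see \eqref{eq:explicit_solution_eta}), but only \emph{after} well-posedness is established and only to sharpen the $\norm{\eta}_\infty$ bound as in \eqref{eq:norm_eta_bound} --- not as the engine of the existence argument. What your route buys is a cleaner conceptual reduction to a single unknown and a transparent Lipschitz chain $\mathrm{Lip}(\Gamma)\le \mathrm{Lip}(\mathcal{R})\cdot \mathrm{Lip}(\mathcal{E})$; what it costs is that you must separately establish well-posedness and $\eta$-stability of the prescribed-weight problem $\mathcal{R}$, which is precisely the hard nonlinear part, so the overall effort is comparable. Two small cautions: \ref{ass:omega_symmetric} is not among the hypotheses of Theorem~\ref{thm:well-posedness} (and the paper explicitly notes it is not needed for well-posedness), so your remark that $\mathcal{E}_t[\rho]$ \say{stays symmetric} should be dropped or conditioned on a symmetric $\eta^0$; and when invoking the Bielecki-weighted norm, note that the constant $L_\w$ alone does not shrink with the weight, so the small factor must come from the time-integral structure of $\mathcal{R}$ (Gr\"onwall yielding a factor $O(1/\lambda)$) --- which you do have, but is worth flagging since a na\"ive \say{compose two Lipschitz maps} reading would not give a contraction.
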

\begin{proof}
We refer the reader to~\cite[Lemma 3.3]{esposito_mikolas_2024} and \cite[Theorem 3.6]{esposito_mikolas_2024}.     
\end{proof}
As shown in \cite[Proposition 2.8]{esposito_mikolas_2024}~\eqref{eq:ivp} preserves mass, i.e. $\rho_t(\Rd) = \rho_0(\Rd)$ for $t\in[0,T]$. In the remainder of this article, for any curve $\gamma \in C([0,T],S)$, for some normed space $(S, \norm{\cdot}_S)$, we will write 
\[
\norm{\gamma}_{\infty,S}:= \sup_{t \in [0,T]}\norm{\gamma_t}_{S} \ . 
\]
We denote by $\|\cdot\|_\infty$ the usual sup-norm when this does not create confusion. For the reader's convenience we postpone to the corresponding section the preliminary results we shall use for the long-time behaviour in section~\ref{subsec:long_time_behaviour}. 
\section{The Euler equation for~\eqref{eq:ivp}}\label{sec:euler-co-ncl}

We focus on the case when $\mu$ defines the set of all possible vertices so that it makes sense to consider $\rho\ll\mu$. In particular, for any $t\in[0,T]$, $\rho_t: = r_t\dd\mu$, for a curve $r \in C([0,T],L^2_\mu(\Rd))$, and $\mu\in\P_2(\Rd)$. In this setting, the nonlocal continuity equation in~\eqref{eq:ivp} can be rewritten so that it can be interpreted as a nonlinear Euler equation. We observe that in this set up there is an interesting comparison between the study carried out in~\cite{Esposito_Pattachini_Schlichting_Slepcev,Esposito_on_a_class,esposito_mikolas_2024} and the recent developments on heterogeneous interacting particle systems, e.g., in~\cite{paul2024microscopic,ayi2024large}. Indeed, first we note that, by choosing $\lambda = \mu \otimes \mu$ in Definition \ref{def:flux}, since $\rho\ll\mu$, we can write the flux as
\[
\dd F^{\Phi}[\mu, \eta; g , w](x, y)=\Phi(g(x), g(y) ; w(x, y))\eta(x,y) \mathrm{d}(\mu \otimes \mu)(x, y),
\]
for any $g \in L^2_\mu(\mathbb{R}^d),\ w \in \mathcal{V}^{\text{as}}(\Rddiag)$, and $(x, y) \in \Rddiag$,  so that $\Phi$ is function of the density, $g$, w.r.t. $\mu$. Furthermore, if we assume that $\Phi$ is jointly antisymmetric, that is, $\Phi(a, b ;-v)=-\Phi(b, a ; v)$ for any $a, b, v \in \mathbb{R}$, and $\eta$ symmetric, the nonlocal divergence of $F^{\Phi}[\mu, \eta ; g, V[g]]$ is given by
\begin{equation}\label{eq:small_flux}
\onabla \cdot F[\mu, \eta ; g, V[g]](x)=\int_{\mathbb{R}^d \backslash \{x\}} \Phi(g(x), g(y) ; V[g](x, y)) \eta(x, y) \mathrm{d} \mu(y)   \ , 
\end{equation}
for $\mu \text {-a.e. } x \in \mathbb{R}^d$. In view of this consideration, we shall consider the system
\begin{equation}\label{eq:euler}
   \begin{split} 
    \partial_t r_t(x) & = - \int_{\Rd\backslash\{x\}}\Phi(r_t(x),r_t(y); V_t[r](x,y))\eta_t(x,y)\dd\mu(y)
    \\
    \partial_t \eta_t & =  \w_t[r] -\eta_t, 
    \end{split}\tag{Euler Co-NCL}
\end{equation}
for given initial data $r^0 \in L^2_\mu(\R^d)$ and $\eta^0\in C_{b}(\Rddiag)$ and symmetric. Solutions of \eqref{eq:euler} are defined as follows, {either in $L^2_\mu(\Rd)$ or $L^\infty_\mu(\Rd)$ since we shall need both setting.}
\begin{definition}[Solution to~\eqref{eq:euler}]\label{def:sol_to_euler}
   {Let $p=2,\infty$.} Given a jointly antisymmetric admissible flux interpolation $\Phi$, a velocity field $V:[0,T]\times L^p_\mu(\Rd) \to \mathcal{V}^{as}(\Rddiag)$, and a function $\w:[0,T]\times L^p_\mu(\R^d)\times \Rddiag \to \R$, a pair $(r, \eta): [0,T] \to L^p_\mu(\R^d)\times  C_{b}(\Rddiag)$ is a solution~\eqref{eq:euler} with initial datum $(r_0,\eta_0)\in L^p_\mu(\Rd)\times C_b(\Rd)$ if it satisfies the following properties: 
    \begin{enumerate}
   \item the maps $t \mapsto \int_{\Rd\backslash\{x\}}\Phi(r_t(x),r_t(y); V_t[r](x,y))\eta_t(x,y)\dd\mu(y)$ and $t \mapsto  \w_t[\rho_t] -\eta_t$ belong to $L^1([0,T])$;  \label{cond:sol_1_r}
       \item for $\mu$-a.e. $x \in \Rd$, $t \in [0,T]$, $(x,y)\in\Rddiag$, it holds 
    \begin{subequations}\label{eq:sol_r,eta}
       \begin{align}
             r_t(x) & = r_0(x) - \int_{0}^t \int_{\Rd\backslash\{x\}}\Phi(r_s(x),r_s(y); V_s[r](x,y))\eta_s(x,y)\dd\mu(y)\dd s
           \\
           \label{eq:sol_eta}
           \eta_t(x,y) &= \eta_0(x,y)  + \int_0^t  \left(\w_s[r](x,y) -\eta_s(x,y)\right) \,\dd s.
        \end{align}
    \end{subequations}
    \item $r \in AC([0,T], L^p_\mu(\R^d)),\ \eta \in AC([0,T],  C_{b}(\Rddiag))$. \label{cond:sol_ac_r,eta}
\end{enumerate}
\end{definition}

Possible examples, in the $L^2_\mu$ setting, for the functions $V$ and $\w$ are given by
 \begin{align}\label{eq:omega_example}
        \w_t[r](x,y) &= \int_{\Rd}W(t,x,y,z) r(z)\dd\mu(z),
        \\
        \label{eq:V_example}
    V_t[r](x,y) &= - \onabla (K*\rho_t)(x,y)= -\int_{\Rd} (K(y,z)- K(x,z))r_t(z) \dd \mu(z),
    \end{align}
 where for $(x,y) \in \Rddiag$, the map $t\mapsto W(t,\cdot,\cdot,\cdot) \in L^1([0,T])$, and $W \in C_b([0,T]\times \Rddiag\times \Rd)$, $K \in C_b(\R^2)$ and the velocity defined in~\eqref{eq:V_example} is related to the nonlocal interaction energy $\mathcal{E}(\rho)=(1/2)\iint K(x,y)\dd\rho(x)\dd\rho(y) $.

{For $p=2,\infty$}, we obtain well-posedness of~\eqref{eq:euler} by exploiting the Banach fixed-point theorem in the space $C([0,T], {L^p_\mu(\Rd)})\times C([0,T], C_b(\Rddiag))$ equipped with the distance 
\[
\tilde{d}_\infty((f^1,g^1), (f^2,g^2)) := \norm{f^1-f^2}_{\infty, {L^p_\mu(\Rd)}} + \norm{g^1-g^2}_{\infty, C_b(\Rddiag)},
\]
\begin{align*}
    \norm{f^1-f^2}_{\infty, {L^p_\mu(\Rd)}} &:= \sup_{t \in [0,T]} \norm{f^1_t-f^2_t}_{{L^p_\mu(\Rd)}} \mbox{ and }\norm{g^1-g^2}_{\infty, C_b(\Rddiag)} & := \sup_{t \in [0,T]} \norm{g^1_t-g^2_t}_{C_b(\Rddiag)} \ . 
\end{align*}
In this section, we fix $p=2$ and proving \textit{a priori} properties for~\eqref{eq:euler} useful to define the solution maps.  
\begin{proposition}\label{prop:L2_apriori_props}
   Let $\Phi$ be a jointly antisymmetric admissible flux interpolation, $r_0 \in L^2_{\mu}(\Rd)$, $\eta_0\in C_b(\Rddiag)$ and symmetric. Assume $\w:[0,T]\times L^2_\mu(\R^d)\times \Rddiag  \to  \R$ is such that the map $(x,y)\in\Rddiag  \mapsto \w_t[\cdot](\cdot,x,y)$ is continuous and that for a constant $C_\w>0$
\[
       \int_0^T \sup_{r \in L^2_\mu(\R^d)}\sup_{(x,y) \in \Rddiag}|\w_s[r](x,y)| \dd s\le C_\w\ .
\]
Suppose that $V:[0, T]\times L^2_\mu(\R^d) \rightarrow \mathcal{V}^{\mathrm{as}}(\Rddiag)$ satisfies 
    \begin{equation}\label{eq:V_bound}
        \int_0^T \sup_{r \in L^2_\mu(\Rd)}\sup_{x \in \Rd} \int_{\Rd\backslash \{x\}}|V_t[r](x,y)|^2\dd\mu(y)\dd t \leq C_V, 
    \end{equation}
 for some $C_V>0$. For a pair $(r,\eta):[0,T]\to L^2_\mu(\Rd)\times C_b(\Rddiag)$ satisfying~\eqref{eq:sol_r,eta} the following properties hold.
 \begin{enumerate}
     \item For $\mu$-a.e. $x \in \Rd$ and every $(x,y)\in\Rddiag$, the maps 
     \begin{align*}
         &t \mapsto\int_{\Rdminus}\Phi(r_t(x),r_t(y); V_t[r](x,y))\eta_t(x,x)\dd\mu(y) \in L^1([0,T]),
         \\
         & t\mapsto \w_t[r](x,y) - \eta_t(x,y) \in L^1([0,T]) ,
     \end{align*}
     
and $r \in L^\infty([0,T];L^2_\mu(\Rd)), \eta\in L^\infty([0,T],C_b(\Rddiag))$.
\item $r \in AC([0,T];L^2_\mu(\Rd))$ and $\eta \in AC([0,T];C_b(\Rddiag))$.
 \end{enumerate}
\end{proposition}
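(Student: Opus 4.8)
The strategy is a direct verification exercise: we must show that the two scalar integrands governing the evolution of $r$ and $\eta$ are integrable in time, that $r$ and $\eta$ are bounded in time, and finally that $t\mapsto r_t$ and $t\mapsto \eta_t$ are absolutely continuous. The key tool throughout is the argument-wise Lipschitz property and one-homogeneity of $\Phi$ from Definition~\ref{def:admissible_interpolation}, combined with the degeneracy condition $\Phi(a,b;0)=0$, which together yield the pointwise bound $|\Phi(r_t(x),r_t(y);V_t[r](x,y))| \le L_\Phi(|r_t(x)|+|r_t(y)|)|V_t[r](x,y)|$.

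\emph{Step 1 (integrability of the $r$-flux).} First I would fix $x$ and bound the inner integral using the pointwise estimate above together with Cauchy–Schwarz in $L^2_\mu$: since $\eta$ is continuous and bounded, $\int_{\Rd\setminus\{x\}}|\Phi(\cdots)|\,\eta_t(x,y)\,\dd\mu(y) \lesssim \|\eta_t\|_\infty \big(|r_t(x)| + \|r_t\|_{L^2_\mu}\big)\big(\int_{\Rd\setminus\{x\}}|V_t[r](x,y)|^2\,\dd\mu(y)\big)^{1/2}$, using $\mu\in\P_2(\Rd)$ so that constants are in $L^2_\mu$. Integrating in $t$ and applying Cauchy–Schwarz in time, the $V$-term is controlled by $\sqrt{C_V}$ from~\eqref{eq:V_bound}, provided we already know $\|\eta\|_{\infty,\infty}$ and $\sup_t\|r_t\|_{L^2_\mu}$ are finite; so this step is intertwined with Step 3. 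For the $\eta$-integrand $t\mapsto \w_t[r](x,y)-\eta_t(x,y)$, integrability is immediate from the hypothesis $\int_0^T\sup|\w_s[r]|\,\dd s\le C_\w$ together with boundedness of $\eta$.

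\emph{Step 2 (a priori bound on $\eta$).} From~\eqref{eq:sol_eta}, for each $(x,y)$ Grönwall's inequality (or direct estimation of $|\eta_t(x,y)| \le |\eta_0(x,y)| + \int_0^t|\w_s[r](x,y)|\,\dd s + \int_0^t|\eta_s(x,y)|\,\dd s$) gives $\|\eta_t\|_\infty \le (\|\eta_0\|_\infty + C_\w)e^{T}$, uniformly in $t$; this also shows $\eta\in L^\infty([0,T];C_b(\Rddiag))$ once continuity in $(x,y)$ is noted (the map $(x,y)\mapsto\w_t[r](x,y)$ is continuous by assumption, and the integral of a bounded continuous family is continuous).

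\emph{Step 3 (a priori bound on $r$).} Plugging the Step 1 pointwise bound into~\eqref{eq:sol_r,eta}, taking absolute values, integrating $|r_t(x)|^2$ against $\mu$, and using Minkowski/Jensen, one gets $\|r_t\|_{L^2_\mu} \le \|r_0\|_{L^2_\mu} + \int_0^t \|\text{flux}_s\|_{L^2_\mu}\,\dd s$, and the flux $L^2_\mu$-norm is bounded by $C\|\eta_s\|_\infty\big(\|r_s\|_{L^2_\mu} + \text{const}\big)h(s)$ where $h(s) := \sup_x(\int|V_s[r](x,y)|^2\dd\mu(y))^{1/2}$ satisfies $\int_0^T h(s)^2\,\dd s \le C_V$, hence $h\in L^2([0,T])\subset L^1([0,T])$. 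A Grönwall argument with the $L^1$ weight $\|\eta_s\|_\infty h(s)$ then yields $\sup_{t\in[0,T]}\|r_t\|_{L^2_\mu} < \infty$. Feeding this back closes Step 1.

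\emph{Step 4 (absolute continuity).} For $0\le s < t\le T$, subtracting the integral identities gives $\|r_t - r_s\|_{L^2_\mu} \le \int_s^t \|\text{flux}_\tau\|_{L^2_\mu}\,\dd\tau$ and $\|\eta_t - \eta_s\|_\infty \le \int_s^t(\|\w_\tau[r]\|_\infty + \|\eta_\tau\|_\infty)\,\dd\tau$; since both integrands are in $L^1([0,T])$ by Steps 1--3, the moduli $m(\tau)$ in the definition of $\AC$ are provided, establishing claim~(2).

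\textbf{Main obstacle.} The delicate point is the circularity between Steps 1 and 3: the time-integrability of the flux is needed to run Grönwall for the $L^2_\mu$-bound on $r$, but that bound is in turn used to control the flux. The clean way to resolve this is to treat $\|r_t\|_{L^2_\mu}$ as the unknown in an integral (Grönwall) inequality whose kernel $\|\eta_s\|_\infty h(s)$ is, by Step 2 and the Cauchy–Schwarz consequence of~\eqref{eq:V_bound}, already known to be $L^1([0,T])$ \emph{before} any bound on $r$ is invoked — only the degeneracy $\Phi(a,b;0)=0$ and $\|\eta_0\|_\infty + C_\w$ enter the kernel. One subtlety worth flagging: the pointwise-in-$x$ statements (first bullet of item (1)) require that the $\mu$-null set where they might fail is chosen after the $t$-integration, which is fine by Fubini since the joint map $(t,x)\mapsto\int|\Phi(\cdots)|\eta_t\,\dd\mu(y)$ is measurable and $L^1$ on $[0,T]\times\Rd$ against $\dd t\otimes\dd\mu$.
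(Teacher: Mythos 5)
Your proof is correct and follows essentially the same route as the paper: bound $\eta$ via Grönwall, derive an integral inequality for $\|r_t\|_{L^2_\mu}$ with an $L^1$-in-time kernel coming from Cauchy--Schwarz on the velocity and Minkowski on the $r$-terms, close it by Grönwall, and then read off absolute continuity from the integral identities. The only small deviation is your Fubini argument for the $\mu$-a.e.\ pointwise time-integrability of the flux, where the paper instead runs a second, pointwise Grönwall estimate on $|r_t(x)|$; both are valid and your version is, if anything, slightly cleaner.
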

\begin{proof}
We obtain $\eta \in L^\infty([0,T]; C_b(\Rddiag))$ in an analogous way to~\cite[Proposition 2.8]{Esposito_on_a_class} using~\eqref{eq:eta_evolution}, with
\begin{equation}\label{eq:bound_eta_gronwall}
         \|\eta_t\|_{\infty}\leq \left(\|\eta_0\|_{\infty} + C_\w\right) e^{T}\ , 
    \end{equation}
    and note that the integrability of $t\mapsto \w_t[r](x,y) - \eta_t(x,y)$ directly follows. As for $r$, for $\mu$-a.e. $x\in\Rd$, by using~\ref{ass:interp_deg},~\ref{ass:interp_lip} in Definition~\ref{def:admissible_interpolation}, boundedness of $\eta$, and~\eqref{eq:V_bound} we have 
    \begin{align*}
        |r_t(x)| &\leq |r_0(x)| + \int_{0}^t\int_{\Rdminus}\left|\Phi(r_s(x),r_s(y); V_s[r](x,y))\eta_s(x,y)\right|\dd\mu(y)\dd s
        \\
        & \leq |r_0(x)| +L_\Phi\norm{\eta}_{\infty, C_b(\Rddiag)} \int_{0}^t\int_{\Rdminus}\left|V_s[r](x,y)\right|(|r_s(x)| + |r_s(y)|)\dd\mu(y)\dd s
        \\
        & \leq |r_0(x)|\! +\!L_\Phi\norm{\eta}_{\infty, C_b(\Rddiag)}\! \int_{0}^t\!\left(\sup_{r\in L^2_\mu(\Rd)}\sup_{x \in \Rd}\int_{\Rdminus}\left|V_s[r](x,y)\right|^2\dd\mu(y)\!\right)^{\frac{1}{2}}\!\!
        \\
        &\qquad  \times \left(\!\int_{\Rdminus}\!\!\!(|r_s(x)|+ |r_s(y)|)^2\dd\mu(y)\right)^{\frac{1}{2}}\!\!\!\! \dd s
        \\
        & \leq |r_0(x)| + L_\Phi\norm{\eta}_{\infty, C_b(\Rddiag)} \int_{0}^t\bar{v}_s^{1/2}({\norm{r_s}_{L^2_\mu(\Rd)}} + |r_s(x)|) \dd s\ ,
    \end{align*}
where we used the Cauchy--Schwarz and Minkowski inequalities, $\mu(\Rd)=1$, and we set 
\begin{equation}\label{eq:small_v_notation}
\bar{v}_t:=\sup_{r\in L^2_\mu(\Rd)}\sup_{x \in \Rd}\left(\int_{\Rdminus}|V_t[r](x,y)|^2 \dd\mu(y)\right)\ . 
\end{equation}
An application of Minkowski's inequality and \cite[Theorem 6.19 a.]{folland1999real} yields
\[
\norm{r_t}_{L^2_\mu(\Rd)} \leq \norm{r_0}_{L^2_\mu(\Rd)} + L_\Phi\norm{\eta}_{\infty, C_b(\Rddiag)} 2\int_{0}^t\bar{v}_s^{1/2}{\norm{r_s}_{L^2_\mu(\Rd)}} \dd s \ .
\]
Then, an application of Gr\"onwall's inequality yield, for any $t\in[0,T]$, 
\begin{equation}\label{eq:L2_r_bounded}
\norm{r_t}_{L^2_\mu(\Rd)} \leq \norm{r_0}_{L^2_\mu(\Rd)}e^{2L_\Phi\norm{\eta}_\infty C_V},
\end{equation}
thus $r \in L^{\infty}([0,T];L^2_\mu(\Rd))$. The integrability of the nonlocal divergence can be proven by a further application of Gronwall inequality. More precisely, by the previous argument we know that, for $\mu$-a.e. $x\in\Rd$,
\begin{align*}
|r_t(x)|&\le|r_0(x)|+\int_{0}^t\int_{\Rdminus}\left|\Phi(r_t(x),r_t(y); V_s[r_s](x,y))\eta_s(x,y)\right|\dd\mu(y)\dd s
\\
& \leq  {|r_0(x)| + L_\Phi\norm{\eta}_{\infty, C_b(\Rddiag)}\left(\norm{r_0}_{L^2_\mu(\Rd)}e^{2L_\Phi\norm{\eta}_\infty C_V}  \int_{0}^t\bar{v}_s^{1/2}\dd s+ \int_{0}^t\bar{v}_s^{1/2}|r_s(x)| \dd s\right)}
\\
& \leq  {|r_0(x)| + L_\Phi\norm{\eta}_{\infty, C_b(\Rddiag)}\left(  \norm{r_0}_{L^2_\mu(\Rd)}e^{2L_\Phi\norm{\eta}_\infty C_V}\left(t\int_{0}^t\bar{v}_s\dd s \right)^{1/2}+ \int_{0}^t\bar{v}_s^{1/2}|r_s(x)| \dd s\right)}
\\
&\le|r_0(x)|+\bar{C}\sqrt{t}+L_\Phi\norm{\eta}_{\infty, C_b(\Rddiag)}\int_{0}^t\bar{v}_s^{1/2}|r_s(x)| \dd s, 
\end{align*}
for $\bar{C}=L_\Phi\norm{\eta}_{\infty, C_b(\Rddiag)}\norm{r_0}_{L^2_\mu(\Rd)}e^{2L_\Phi\norm{\eta}_\infty C_V}C_V^{1/2}$. Then, for any $t\in[0,T]$ and $\mu$-a.e. $x\in\Rd$, 
\[
|r_t(x)|\le (|r_0(x)|+\bar{C}\sqrt{t})\exp\left(L_\Phi\norm{\eta}_{\infty, C_b(\Rddiag)}\int_0^t\bar{v_s}^{1/2} \dd s\right) \le (|r_0(x)|+\bar{C}\sqrt{t})e^{L_\Phi\norm{\eta}_{\infty, C_b(\Rddiag)}\sqrt{t}\sqrt{C_V}},
\]
from which we infer time-integrability of the nonlocal divergence, since we can bound ${\norm{r_s}_{L^2_\mu(\Rd)}} + |r_s(x)|$ on $[0,T]$. Part (2) is consequence of the time integrability and~\eqref{eq:sol_r,eta}. 
\end{proof}
We proceed with proving well-posedness of \eqref{eq:euler}. Fix $r_0\in L^2_\mu(\Rd),\  \eta_0 \in C_b(\Rddiag)$ for $\eta_0$ symmetric, and assume $\Phi$ is a jointly antisymmetric admissible flux interpolation as in Definition~\ref{def:admissible_interpolation}. We consider 
\[
\ACt:=AC([0,T]; L^2_{\mu}(\R^d)) \times AC([0,T]; C_b(\Rddiag)),
\]
and equip it with the distance $\tilde{d}_\infty$. We define the solution map $\S:= (\S_V,\S_\w): \ACt \to \ACt$ as
\begin{align*}
    \S_V(r,\eta)(t)(x)&:= r_0(x) - \int_0^t \int_{\Rdminus} \Phi(r_s(x), r_s(y);V_s[r](x,y))\eta_s(x,y)\dd \mu(y)\dd s, \quad \mu\text{-a.e.}\ x \in \Rd,
    \\
    \S_\w(r,\eta)(t) (x,y)& := \eta_0(x,y) + \int_0^t \left(\w_s[r](x,y) - \eta_s(x,y)\right) \dd s\ , 
\end{align*}
for any $t \in [0,T]$ and $(x,y) \in \Rddiag$. The a priori properties in Proposition~\ref{prop:L2_apriori_props} ensure this map $\S$ is well defined. Below, we prove existence and uniqueness of solutions to~\eqref{eq:euler} by assuming a uniform-in-time integrability assumption for the velocity field $V$ motivated by our guiding example, see \eqref{eq:V_example}, although the result holds for $V$ satisfying~\eqref{eq:V_bound}. The stronger assumption is relevant in Section~\ref{sec:graph_CE}, where it is used to obtain the Lipschitzness of the characteristics.
\begin{theorem}\label{thm:well_posedness_euler}
    Let $\Phi$ be a jointly antisymmetric admissible flux interpolation as in Definition~\ref{def:admissible_interpolation} and consider an initial datum $(r^0, \eta^0) \in L^2_\mu(\Rd)\times C_b(\Rddiag)$, with $\eta^0$ symmetric. Assume that $V:[0,T]\times L^2_\mu(\Rd)\to \V^{as}(\Rddiag)$ satisfies~\eqref{eq:V_bound} or  

\begin{subequations}
    \begin{equation}\label{eq:velocity_second_moment}
    \sup_{t\in[0,T]} \sup_{r \in L^2_\mu(\Rd)} \sup_{x\in \Rd}  \int_{\Rdminus}|V_t[r](x,x')|^2\dd\mu(x') \leq C_{V} ,
    \end{equation}
and, for all $t\in[0,T]$ and any $g,\tilde{g}\in L^2_\mu(\Rd)$,
\begin{equation}\label{eq:velocity_Lipschitz_second_moment}
     \sup_{x\in \Rd}  \int_{\Rd\backslash \{x\}}|V_t[g](x,x') - V_t[\tilde{g}](x,x')|^2\dd\mu(x') \leq L_V\norm{g-\tilde{g}}^2_{L^2_\mu(\Rd)},     
    \end{equation}
for constants $C_V, L_V>0$.
\end{subequations}    
Assume that $\w:[0,T]\times L^2_\mu(\Rd) \times \Rddiag \to \R$ satisfies \ref{ass:w_continuous}-\ref{ass:omega_symmetric}. Then, there exists a solution $(r, \eta)$ to \eqref{eq:euler} with initial datum $(r^0,\eta^0)$, as in~Definition \ref{def:sol_to_euler} . 
\end{theorem}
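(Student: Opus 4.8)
The plan is to construct a solution as a fixed point of the map $\S=(\S_V,\S_\w)$ defined above, set up as a self-map of a closed (hence complete) subset $\mathcal B$ of the metric space $\big(C([0,T];L^2_\mu(\Rd))\times C([0,T];C_b(\Rddiag)),\,\tilde d_\infty\big)$. I would take $\mathcal B$ to consist of the pairs $(r,\eta)$ with $(r_0,\eta_0)=(r^0,\eta^0)$, with $\eta_t$ symmetric for every $t$, and satisfying the a priori bounds of Proposition~\ref{prop:L2_apriori_props}, namely $\|\eta_t\|_\infty\le R_\eta(t)$ and $\|r_t\|_{L^2_\mu(\Rd)}\le R_r(t)$ for all $t\in[0,T]$, where $R_\eta(\cdot)$ and $R_r(\cdot)$ are the continuous, time-dependent Gr\"onwall profiles underlying~\eqref{eq:bound_eta_gronwall} and~\eqref{eq:L2_r_bounded}, kept as functions of $t$ rather than replaced by their suprema. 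That $\S$ is well defined on $\mathcal B$, i.e.\ produces absolutely continuous curves, is exactly the content of Proposition~\ref{prop:L2_apriori_props}; symmetry of $\eta$ is preserved by $\S_\w$ because of~\ref{ass:omega_symmetric}; and the key point is that $\S(\mathcal B)\subseteq\mathcal B$, which I would obtain by re-running on $\S(r,\eta)$ the same Gr\"onwall estimates as in Proposition~\ref{prop:L2_apriori_props}, this time legitimately inserting $\|r_s\|_{L^2_\mu}\le R_r(s)$ and $\|\eta_s\|_\infty\le R_\eta(s)$ under the time integrals since $(r,\eta)\in\mathcal B$; the profiles $R_r(t)$, $R_\eta(t)$ are designed precisely so that this reproduces the same bounds.

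The core estimate is the Lipschitz bound for $\S$ on $\mathcal B$. For $z^i=(r^i,\eta^i)\in\mathcal B$ I would write $\hat r^i:=\S_V z^i$, $\hat\eta^i:=\S_\w z^i$; since the initial data coincide, $\hat r^1_t(x)-\hat r^2_t(x)$ and $\hat\eta^1_t(x,y)-\hat\eta^2_t(x,y)$ are pure time integrals of differences of integrands. Setting $\Phi^i:=\Phi\big(r^i_s(x),r^i_s(y);V_s[r^i_s](x,y)\big)$, I would split $\Phi^1\eta^1_s-\Phi^2\eta^2_s=(\Phi^1-\Phi^2)\eta^1_s+\Phi^2(\eta^1_s-\eta^2_s)$; freezing arguments one at a time and using~\ref{ass:interp_lip} controls $|\Phi^1-\Phi^2|$ by $L_\Phi(|r^1_s(x)|+|r^1_s(y)|)\,|V_s[r^1_s](x,y)-V_s[r^2_s](x,y)|+L_\Phi(|r^1_s(x)-r^2_s(x)|+|r^1_s(y)-r^2_s(y)|)\,|V_s[r^2_s](x,y)|$, while~\ref{ass:interp_deg} and~\ref{ass:interp_lip} give $|\Phi^2|\le L_\Phi(|r^2_s(x)|+|r^2_s(y)|)\,|V_s[r^2_s](x,y)|$. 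Integrating in $y$ against $\mu$, I would use Cauchy--Schwarz with~\eqref{eq:velocity_second_moment} and~\eqref{eq:velocity_Lipschitz_second_moment} to turn the velocity factors into $C_V^{1/2}$ and $L_V^{1/2}\|r^1_s-r^2_s\|_{L^2_\mu}$, then Minkowski's integral inequality together with $\mu(\Rd)=1$ to absorb the remaining pointwise factors $|r^i_s(x)|$ and $|r^1_s(x)-r^2_s(x)|$ into $\|r^i_s\|_{L^2_\mu}\le R_r$ and $\|r^1_s-r^2_s\|_{L^2_\mu}$ once the $L^2_\mu(\dd x)$-norm is taken; for the $\eta$-part I would use~\ref{ass:omega_lip} together with $\|r^1_s\mu-r^2_s\mu\|_{\TV}=\|r^1_s-r^2_s\|_{L^1_\mu}\le\|r^1_s-r^2_s\|_{L^2_\mu}$. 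The outcome should be, with $d_s:=\|r^1_s-r^2_s\|_{L^2_\mu}+\|\eta^1_s-\eta^2_s\|_\infty$ and a constant $C=C(L_\Phi,L_\w,C_V,L_V,R_r,R_\eta)$,
\[
\|\hat r^1_t-\hat r^2_t\|_{L^2_\mu}+\|\hat\eta^1_t-\hat\eta^2_t\|_\infty\ \le\ C\int_0^t d_s\,\dd s,\qquad t\in[0,T].
\]

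Finally I would iterate: the displayed inequality yields $\tilde d_\infty(\S^n z^1,\S^n z^2)\le \tfrac{(CT)^n}{n!}\,\tilde d_\infty(z^1,z^2)$ by induction, so $\S^n$ is a strict contraction on $\mathcal B$ for $n$ large; the generalised Banach fixed-point theorem (a self-map one of whose iterates is a contraction has a unique fixed point) produces a unique $(r,\eta)\in\mathcal B$ with $\S(r,\eta)=(r,\eta)$, i.e.\ a pair satisfying~\eqref{eq:sol_r,eta}. Proposition~\ref{prop:L2_apriori_props} then upgrades this pair to $r\in AC([0,T];L^2_\mu(\Rd))$, $\eta\in AC([0,T];C_b(\Rddiag))$ with the $L^1$-in-time integrability required in Definition~\ref{def:sol_to_euler}, so $(r,\eta)$ is a solution of~\eqref{eq:euler} with the prescribed initial datum (and the unique one, since any solution obeys those a priori bounds and therefore lies in $\mathcal B$). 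The case in which $V$ only obeys the time-integrated bound~\eqref{eq:V_bound} would be handled identically, replacing $C_V$ by $\int_0^T\bar v_s\,\dd s$ (cf.~\eqref{eq:small_v_notation}) throughout --- precisely the form already used in Proposition~\ref{prop:L2_apriori_props} --- the one caveat being that uniqueness requires a Lipschitz control such as~\eqref{eq:velocity_Lipschitz_second_moment}. I expect the main obstacle, beyond the bookkeeping in the flux estimate, to be the mismatch between the pointwise-in-$x$ structure of the $r$-equation and the $L^2_\mu$-metric in which contraction is measured: the estimates must be arranged --- via Minkowski's integral inequality and $\mu(\Rd)=1$ --- so that the pointwise terms $|r_s(x)|$, $|r^1_s(x)-r^2_s(x)|$ collapse into their $L^2_\mu$-norms, and so that the fixed-point set $\mathcal B$ is genuinely $\S$-invariant on the whole of $[0,T]$ rather than only on a short subinterval.
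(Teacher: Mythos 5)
Your proposal is correct, and it takes a genuinely (if modestly) different route from the paper's. The paper applies the plain Banach fixed-point theorem to $\S$ on $C([0,T];L^2_\mu(\Rd))\times C([0,T];C_b(\Rddiag))$, obtains a Lipschitz bound of the form $\tilde d_\infty(\S z^1,\S z^2)\le \max\{\alpha(T),\beta(T)\}\,T\,\tilde d_\infty(z^1,z^2)$ with $\alpha(T),\beta(T)$ growing in $T$, concludes existence only for $T<T^*$, and then iterates in time to cover $[0,T]$. You instead set up an explicit closed $\S$-invariant ball $\mathcal B$ carrying the Gr\"onwall profiles, prove the sharper \emph{integral} estimate $\|\hat r^1_t-\hat r^2_t\|_{L^2_\mu}+\|\hat\eta^1_t-\hat\eta^2_t\|_\infty\le C\int_0^t d_s\,\dd s$, and deduce $\tilde d_\infty(\S^n z^1,\S^n z^2)\le\tfrac{(CT)^n}{n!}\tilde d_\infty(z^1,z^2)$, so that some iterate $\S^n$ is already a strict contraction on all of $[0,T]$ and the time-extension step disappears. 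Two things your route buys that the paper's does not make explicit: (i) the a priori bounds $R_r(t)$, $R_\eta(t)$ are used legitimately because the competitors live in $\mathcal B$ by construction, whereas the paper inserts the bound~\eqref{eq:L2_r_bounded} for an arbitrary element of the ambient space, which strictly speaking requires exactly the invariant-set restriction you make; (ii) no small-$T$/extension argument is needed. The constituent pieces of your flux estimate (splitting $\Phi^1\eta^1-\Phi^2\eta^2$, using~\ref{ass:interp_lip}, Cauchy--Schwarz with~\eqref{eq:velocity_second_moment}/\eqref{eq:velocity_Lipschitz_second_moment}, Minkowski plus $\mu(\Rd)=1$ to collapse pointwise factors into $L^2_\mu$-norms, and the $\TV\to L^1_\mu\to L^2_\mu$ chain for~\ref{ass:omega_lip}) coincide with the paper's treatment of $I$, $II$, $III$ and of $\S_\w$. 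Your remark that the case where $V$ satisfies only~\eqref{eq:V_bound} requires some Lipschitz control of $V$ in $r$ for uniqueness is well taken: the paper's own computation for that case also uses the constant $L_V$, even though the theorem statement does not display such a hypothesis.
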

\begin{proof}
    Let us assume~\eqref{eq:velocity_second_moment} since the argument is slightly more involved for this case at the end of the proof. For $(r^0,\eta^0)$ given, we consider $(r,\eta), (\tr,\teta) \in \ACt$ and, for $\mu$-a.e. $x \in \Rd$, we have
    \begin{align*}
        &|\S_V(r,\eta)(t) - \S_V(\tr,\teta)(t)|(x)  \\
        &\leq \int_0^t \int_{\Rdminus}\Big| \Phi(r_s(x), r_s(y);V_s[r](x,y))\eta_s(x,y) - \Phi(\tr_s(x), \tr_s(y);V_s[\tr](x,y))\teta_s(x,y)\Big|\dd \mu(y)\dd s
        \\
        & \leq L_\Phi \int_0^t  \int_{\Rdminus}\big|\eta_s(x,y)\big| (|r_s(x) - \tr_s(x)| + |r_s(y) - \tr_s(y)|)|V_s[r](x,y)|\dd \mu(y)\dd s
        \\
        &\quad+L_\Phi \int_0^t  \int_{\Rdminus}\left(|\tr_s(x)| + |\tr_s(y)|\right)|V_s[r](x,y)|\big|\eta_s(x,y) - \teta_s(x,y)\big| \dd \mu(y)\dd s 
        \\
        &\quad + L_\Phi \int_0^t  \int_{\Rdminus}\big|\teta_s(x,y)\big|\left(|\tr_s(x)| + |\tr_s(y)|\right)|V_s[r](x,y) - V_s[\tr](x,y)|\dd \mu(y)\dd s
        \\
        &=: I + II + III,
    \end{align*}
where we used~\ref{ass:interp_deg},~\ref{ass:interp_lip} in Definition~\ref{def:admissible_interpolation}.
$I$ can be estimated as follows by means of Cauchy--Schwarz and Minkowski inequalities
\begin{align}\label{eq:term_1_euler}
    |I| &\leq L_\Phi \norm{\eta}_{\infty, C_b(\Rddiag)} \left(\sup_{t \in [0,T]}\sup_{r \in L^2_\mu(\Rd)}\sup_{x\in\Rd}\int_{\Rdminus}\left|V_t[r](x,y)\right|^2\dd\mu(y)\right)^{1/2} \nonumber
    \\ &\qquad\qquad\qquad\times\int_{0}^t\left(\int_{\Rdminus}\left(|r_s(x)-\tr_s(x)| + |r_s(y) - \tr_s(y)|\right)^2\dd\mu(y)\right)^{1/2}\!\!\! \dd s
        \\
       & \leq  L_\Phi\norm{\eta}_{\infty, C_b(\Rddiag)}C_V^{1/2}\int_{0}^t(\norm{r_s-\tr_s}_{L^2_\mu(\Rd)} + |r_s(x) - \tr_s(x)|)\dd s\ , \nonumber 
\end{align}
where in the second inequality we used \eqref{eq:velocity_second_moment}. Squaring both sides and integrating with respect to $\mu$
\begin{align*}
    \norm{I}^2_{L^2_\mu(\Rd)} & \leq L_\Phi^2 \norm{\eta}_{\infty,C_b(\Rddiag)}^2 C_V \int_{\Rd}\bigg|\int_{0}^t(\norm{r_s-\tr_s}_{L^2_\mu(\Rd)} + |r_s(x) - \tr_s(x)|)\dd s\bigg|^2 \dd\mu(x)
    \\
    & \leq  L_\Phi^2 \norm{\eta}_{\infty,C_b(\Rddiag)}^2C_V T\int_{\Rd}\int_0^T(\norm{r_s-\tr_s}_{L^2_\mu(\Rd)} + |r_s(x) - \tr_s(x)|)^2\dd s\dd\mu(x)
    \\
    & \leq L_\Phi^2 \norm{\eta}_{\infty,C_b(\Rddiag)}^2 2C_V T\int_0^T \norm{r_s-\tr_s}^2_{L^2_\mu(\Rd)} \dd s\ 
    \\
    & \leq L_\Phi^2 \norm{\eta}_{\infty,C_b(\Rddiag)}^2 2C_V  T^2\norm{r-\tr}^2_{\infty, L^2_\mu(\Rd)},
\end{align*}
thus
\[
\norm{I}_{L^2_\mu(\Rd)} \leq \sqrt{2} L_\Phi \norm{\eta}_{\infty, C_b(\Rddiag)} C_V^{1/2}\norm{r - \tr}_{\infty, L^2_\mu(\Rd)}{T}\ . 
\]
For $II$ we obtain similarly that 
\[
|II| \leq L_\Phi \norm{\eta -\teta}_{\infty, C_b(\Rddiag)}C_V^{1/2}\int_0^t(  \norm{\tr_s}_{L^2_\mu(\Rd)}+ |r_s(x)|)\dd s\ ,
\]
and thus 
\begin{align*}
\norm{II}_{L^2_\mu(\Rd)} &\leq L_\Phi \norm{\eta -\teta}_{\infty, C_b(\Rddiag)}C_V^{1/2}(\norm{r}_{\infty,L^2_\mu(\Rd)} + \norm{\tr}_{\infty,L^2_\mu(\Rd)}) T\ 
\\
& \leq 2 L_\Phi \norm{\eta -\teta}_{\infty, C_b(\Rddiag)}C_V^{1/2}\bar{C}_T T  \ ,
\end{align*}
where in the last inequality $\bar{C}_T=\norm{r_0}_{L^2_\mu(\Rd)}e^{2L_\Phi\norm{\eta}_\infty C_V T}$, from~\eqref{eq:L2_r_bounded}. Finally, for $III$ we use~\eqref{eq:velocity_Lipschitz_second_moment} to have 
\begin{align*}
    |III| &\leq L_\Phi \norm{\teta}_{\infty, C_b(\Rddiag)}\left(\sup_{t\in[0,T]}\sup_{r,\tr \in L^2_\mu(\Rd)}\sup_{x \in \Rd} \int_{\Rdminus}|V_t[r](x,y) - V_t[\tr](x,y)|^2 \dd \mu(y)\right)^{1/2} 
    \\
    &\qquad \qquad \qquad \qquad \qquad \times \int_0^t \norm{\tr_s}_{L^2_\mu(\Rd)}  + |\tr_s(x)|\dd s\ 
    \\
    & \leq L_\Phi\norm{\teta}_{\infty, C_b(\Rddiag)} L_V^{1/2} \norm{r - \tr}_{\infty, L^2_\mu(\Rd)}\int_0^t \norm{\tr_s}_{L^2_\mu(\Rd)}  + |\tr_s(x)|\dd s. 
\end{align*}
Therefore, 
\begin{align*}
    \norm{III}_{L^2_\mu(\Rd)} &\leq \sqrt{2} L_\Phi\norm{\teta}_{\infty, C_b(\Rddiag)} L_V^{1/2}\norm{r}_{ \infty, L^2_\mu(\Rd)}\norm{r - \tr}_{\infty, L^2_\mu(\Rd)}T\ 
    \\
    & \leq \sqrt{2} L_\Phi\norm{\teta}_{\infty, C_b(\Rddiag)} L_V^{1/2}\norm{r - \tr}_{\infty, L^2_\mu(\Rd)}\bar{C}_TT.
\end{align*}
The same estimate for $\S_\w$ gives
\begin{align*}
    \norm{\S_\w(r,\eta) - \S_\w(\tr,\teta)}_{\infty, C_b(\Rddiag)} &\leq \norm{\eta -\teta}_{\infty, C_b(\Rddiag)} T + L_\w \norm{r - \tr}_{\infty, L^2_\mu(\Rd)}T,
\end{align*}
due to~\ref{ass:omega_lip}~and Proposition~\ref{prop:L2_apriori_props}, see also \cite[Lemma 3.3]{esposito_mikolas_2024}. 
A combination of the previous estimates implies that
\begin{align*}
        d_\infty((r,\eta),(\tr,\teta)) &\leq (\sqrt{2} L_\Phi\norm{\teta}_{\infty, C_b(\Rddiag)}( L_V^{1/2}\bar{C}_T+C_V^{1/2}) + L_\w))\norm{r - \tr}_{\infty, L^2_\mu(\Rd)} T 
    \\
    &\quad  + ( 1 + 2L_\Phi 
C_V^{1/2}\bar{C}_T)\norm{\eta - \teta}_{\infty, C_b(\Rddiag)}T\\
&\leq \underbrace{(\sqrt{2} L_\Phi(\norm{\eta_0}_{\infty}+C_\w T)e^T( L_V^{1/2}\bar{C}_T+C_V^{1/2}) + L_\w))}_{:=\alpha(T)}\norm{r - \tr}_{\infty, L^2_\mu(\Rd)} T
    \\
    &\quad  + \underbrace{( 1 + 2L_\Phi 
C_V^{1/2}\bar{C}_T)}_{:=\beta(T)}\norm{\eta - \teta}_{\infty, C_b(\Rddiag)}T
\end{align*}
where we used~\eqref{eq:bound_eta_gronwall} with~\ref{ass:omega_bounded}.
In particular, it follows that 
\[
d_\infty(\S(r,\eta),\S(\tr,\teta)) \leq \max\{\alpha(T),\beta(T)\}Td_\infty((r,\eta),(\tr,\teta))
\]
so that for $T < \kappa(T):=1/\max\{\alpha(T), \beta(T)\}$ the solution map is a contraction. By the Banach fixed-point theorem we can conclude there exists a unique solution to \eqref{eq:euler} in $C_T=C([0,T]; L^2_\mu(\R^d)) \times C([0,T]; C_b(\Rddiag))$ for $T<T^*$, where $T^*$ is such that $\kappa(T^*)^{-1}T^*=1$. This can be verified since $\alpha(T)\simeq 1+ e^T + Te^T$ and $\beta(T) \simeq  1 + e^{T^2e^T}$. Absolute continuity in time follows since $\onabla \cdot F[\mu, \eta ; r, V[r]]$ and the map $(t\mapsto \w_t[r_t]-\eta_t)$ belong to $L^1([0,T])$, as proven in~Proposition~\ref{prop:L2_apriori_props}. The solution can be extended by using a standard iteration procedure so that the time interval does not depend on any $T^*$, see e.g.,~\cite[Theorem 3.6]{esposito_mikolas_2024}.

Now we turn to the analysis of the case $V$ satisfying~\eqref{eq:V_bound}. Following the same strategy, upon using~\eqref{eq:V_bound} instead of~\eqref{eq:velocity_second_moment}, we obtain the bound 
\begin{align*}
     d_\infty((r,\eta),(\tr,\teta))  &\leq \underbrace{(\sqrt{2} L_\Phi(\norm{\eta_0}_{\infty}+C_\w T)e^T( L_V^{1/2}\bar{C}+C_V^{1/2}) + L_\w))}_{:=\alpha(T)}\norm{r - \tr}_{\infty, L^2_\mu(\Rd)} (T+\sqrt{T})
\\
     &\quad  + \underbrace{( 1 + 2L_\Phi 
 C_V^{1/2}\bar{C})}_{:=\beta}\norm{\eta - \teta}_{\infty, C_b(\Rddiag)}(T+\sqrt{T})\ ,
\end{align*}
where $\bar{C}=\norm{r_0}_{L^2_\mu(\Rd)}e^{2L_\Phi\norm{\eta}_\infty C_V}$ and thus 
\[
d_\infty(\S(r,\eta),\S(\tr,\teta)) \leq \max\{\alpha(T),\beta\}(T+\sqrt{T})d_\infty((r,\eta),(\tr,\teta))\ ,
\]
 so that for $T+\sqrt{T} < \kappa(T):=1/\max\{\alpha(T), \beta\}$ the solution map is a contraction. The rest of the argument follows the previous case in a completely analogous way.
\end{proof}
\begin{remark}\label{rem:antisymmetry_eta_interpolation}
   The assumptions on the joint antisymmetry of $\Phi$ and the symmetry of $\eta$ through \ref{ass:omega_symmetric} and $\eta_0$ symmetric in Theorem \ref{thm:well_posedness_euler} are needed so that \eqref{eq:ivp} can be expressed as \eqref{eq:euler}, see~\cite[Section 5]{Esposito_on_a_class}. However, these assumptions are not required to obtain well-posedness of the dynamics described by  \eqref{eq:euler} itself.
\end{remark}
Since we proved existence and uniqueness of solutions to \eqref{eq:euler}, we note that equation~\eqref{eq:sol_eta} admits the explicit solution depending on $r$:
 \begin{equation}\label{eq:explicit_solution_eta}
    \eta_t[r](x, y)=e^{-t }\left(\eta_0(x, y)+\int_0^t e^{s} \omega_s\left[r\right](x, y) \mathrm{d} s\right)\ .
 \end{equation}
Thus, the bound on $\norm{\eta}_{\infty,C_b(\Rddiag)}$ can be improved to 
 \begin{equation}\label{eq:norm_eta_bound}
     \norm{\eta}_{\infty,C_b(\Rddiag)} \leq \norm{\eta_0}_{C_b(\Rddiag)} + C_\w \ .
 \end{equation}
Having proved the well-posedness of \eqref{eq:euler} we turn to the study of the graph continuity equation in the next section.

\section{The graph continuity equation}\label{sec:graph_CE}
Throughout this section we consider a co-evolving graph, $\mathcal{G}_t=(\mu,\eta_t[r])$, for $(r,\eta)$ a solution to~\eqref{eq:euler} in the sense of Definition \ref{def:sol_to_euler}, which exists and is unique due to Theorem~\ref{thm:well_posedness_euler}. We observe that~\eqref{eq:euler} is linked to a particular nonlinear continuity equation if we consider the evolution of the measure $\delta_{r_t}\otimes\mu$, see also~\cite[Section 3.1.3]{paul2024microscopic} for a similar case. More precisely, for any Borel set $A\times B \in \B(\R\times\Rd)$ let us consider the measure $\sigma:= \delta_r \otimes \mu$ given by 
\begin{equation}\label{eq:mu_monokinetic}
   ( \delta_r\otimes \mu)(A \times B) = \int_{B}\delta_{r(x)}(A) \dd \mu(x), 
\end{equation}
where $(r, \eta)$ is the solution of \eqref{eq:euler} with base measure $\mu \in \P_2(\Rd)$. One can check that $\delta_{r_t}\otimes\mu$ is a weak solution (in the sense of Definition~\ref{def:sol_vlasov}) of the following initial value problem  
 \begin{equation}\label{eq:vlasov_equation}
\begin{cases}
  \partial_t \sigma + \partial_{\xi}(\sigma\X[\sigma, r, \eta]) = 0,
    \\
     \sigma_0 = \bar{\sigma} \in \P(\R \times \Rd),
\end{cases}\tag{graph-CE}
\end{equation}
where, for any $\sigma \in \P(\R\times\Rd)$ the velocity field is
\begin{equation}\label{eq:mean_field}
\X[\sigma,r, \eta](t,\xi,x):=-\int_{\R \times \Rdminus}\Phi( \xi, \xi'; V_t[r](x,x'))\eta_t(x,x')\dd \sigma(\xi',x')  \ , 
\end{equation}
for a solution $(r,\eta)$ to~$\eqref{eq:euler}$. We refer to~\eqref{eq:vlasov_equation} as the \textit{graph-continuity equation} since the velocity field depends on the solution of the co-evolving graph problem. We devote the rest of this section to the study of its well-posedness. Solutions to~\eqref{eq:vlasov_equation} are understood in the distributional sense. 
\begin{definition}\label{def:sol_vlasov}
{Let $\mathcal{G}_t(\mu,\eta_t[r])$ be a given co-evolving graph, where $(r,\eta)$ solve \eqref{eq:euler}}. A curve $\sigma \in C([0,T], \P(\R\times \Rd))$ is a solution to \eqref{eq:vlasov_equation} with initial data $\sigma_0 \in \P(\R\times \Rd)$ if, for any $\varphi \in C_c^1([0,T] \times\R \times \Rd)$, it holds
\begin{equation}\label{eq:sol_concept_vlasov}
\begin{aligned}
&\int_0^T\int_{\R \times \Rd} \partial_t\varphi(t, \xi, x) d \sigma_t(\xi, x)\dd t + \int_{\R\times\Rd} \varphi(0,\xi,x) \dd \sigma_0(\xi,x) - \int_{\R \times \Rd} \varphi(T,\xi,x) \dd \sigma_{T}(\xi,x)
\\
& =\int_0^T \int_{\R \times \Rd}  \int_{\R \times \Rdminus}\partial_{\xi} \varphi(t,\xi, x)\Phi( \xi, \xi'; V_t[r](x,x'))\eta_t(x,x') \dd \sigma_t(\xi^{\prime}, x^{\prime}) \dd \sigma_t(\xi, x) \dd t
\ . 
\end{aligned}
\end{equation}
\end{definition}
We are interested on evolutions on graphs, whose possible vertices are given by $\mu$, which is constant in time. For this reason, it makes sense to consider the disintegration of a solution $\sigma_{t}$ to~\eqref{eq:vlasov_equation} of the form \eqref{eq:mu_monokinetic} with respect to the marginal $\mu$ on $\Rd$, i.e. $\sigma_{t} = \int_{\Rd} \sigma_{t,x}\dd \mu(x)$. In what follows, we will denote by $\pi^1:\R\times \Rd\to\R $ and $\pi^2:\R\times\Rd \to \Rd$ the canonical projections given by $\pi^1(x,x') = x$ and $\pi^2(x,x') = x'$ for any $(x,x') \in \R\times\Rd$, so that $\mu = \pi^2_\# \sigma$ for any measure $\sigma$  on $\R\times \Rd$ with marginal $\mu$ on $\Rd$.   By uniqueness $\mu$-almost everywhere of the disintegration we consider the \textit{disintegrated version} of~\eqref{eq:vlasov_equation}
\begin{equation}\label{eq:disintegrated_vlasov_equation}
\begin{cases}
\partial_t \sigma_{t,x} + \partial_\xi(\sigma_{t,x}\mathcal{X}[\sigma,r,\eta](t,\cdot, x)) = 0,\\
\sigma_{0,x}=\bar{\sigma}_x,
\end{cases}
\end{equation}
for $\mu$-almost every $x\in\Rd$ and $\bar\sigma=\int_{\Rd}\bar{\sigma}_x\dd\mu(x)$. For $\mu$-a.e. $x\in\Rd$ we interpret the above equation in the distributional sense, i.e., for any $\varphi\in C^1_c([0,T]\times\R\times\Rd)$, 
\begin{equation}\label{eq:mu_ae_sol_concept_vlasov}
\begin{aligned}
\int_0^T\int_{\R } &\partial_t\varphi(t, \xi, x) d \sigma_{t,x}(\xi)\dd t + \int_{\R} \varphi(0,\xi,x) \dd \sigma_{0,x}(\xi) - \int_{\R} \varphi(T,\xi,x) \dd \sigma_{T,x}(\xi)
\\
& =\int_0^T \int_{\R}  \int_{\R \times \Rdminus}\partial_{\xi} \varphi(t,\xi, x)\Phi( \xi, \xi'; V_t[r](x,x'))\eta_t(x,x') \dd \sigma_t(\xi^{\prime}, x^{\prime}) \dd \sigma_{t,x}(\xi) \dd t . 
\end{aligned}
\end{equation}
Note that integrating \eqref{eq:mu_ae_sol_concept_vlasov} with respect to $\mu$  we obtain $\sigma_t=\int_{\Rd}\sigma_{t,x}\dd\mu(x)$ is a distributional solution to~\eqref{eq:vlasov_equation}.
Due to the nature of our problem, we introduce the notation
\[
\P^\mu_2(\R\times\Rd):=\{\nu \in \P_2(\R\times\Rd) \ | \ {\pi_2}_\# \nu = \mu \}\ ,
\]
for measures in $\P_2(\R\times\Rd)$ having the marginal $\mu$ on $\Rd$. This means we will consider the embedding of the dynamics \eqref{eq:euler} into \eqref{eq:vlasov_equation} for graphs with the same base measure $\mu \in \P_2(\Rd)$. We equip $\P_2^\mu(\R\times\Rd)$ with the distance
\[
L^2_\mu d_2(\nu^1,\nu^2) :=\left( \int_{\Rd}d^2_2(\nu^1_x,\nu^2_x) \dd\mu(x)\right)^{\frac{1}{2}} , \quad \nu^1,\nu^2 \in \P^\mu_2(\R \times \Rd)\ ,
\]
which forms a complete metric space, {see~\cite[Proposition A.0.1]{mikol2024a}}  for the formal statement and proof. For $\sigma \in \P_2^\mu(\R\times\Rd)$, we denote by $m_i[\sigma_{t,x}]:=\int_{\R}|\xi|^i\dd\sigma_{t,x}(\xi)$ $\mu$-a.e. $x \in \Rd$, and $t \in [0,T]$ the $i$-th moment of the disintegration of $\sigma$ with respect to $\mu$, for $i=1,2$, and we will denote by $m_i[\gamma_*]:= \sup_{t \in [0,T]}m_i[\gamma_t]$ the bound in time for the $i$-th moment of a curve $\gamma \in C([0,T];\P(\R))$. In order to prove existence and uniqueness of weak solutions we proceed by looking at the characteristics' equation, as usual for continuity-type equation.

\begin{lemma}\label{lemma:existence_of_flow}
    Let $V:[0,T]\times L^2_\mu(\Rd) \to \V^{as}(\Rddiag)$ be continuous in time and assume it satisfies \eqref{eq:velocity_second_moment} and \eqref{eq:velocity_Lipschitz_second_moment}. Let $\sigma \in C([0,T], \P^{\mu}_2(\R{\times\Rd}))$ with ${\pi_1}_\#\sigma=\gamma\in C([0,T];\P(\R))$ and $(r,\eta)$ be the solution of $\eqref{eq:euler}$. Then, for $\mu$-a.e. $x \in \Rd$, the ODE initial value problem
\begin{align*}
        \frac{d}{dt} v_t(x) &=  \X[\sigma,r,\eta](t,v_t(x), x) \ ,
        \\
        v_0(x) &= u \in \R \ , 
    \end{align*}
    has a unique solution on $[0,T]$.  

\end{lemma}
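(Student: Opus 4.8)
The plan is to apply the classical Picard--Lindel\"of (Cauchy--Lipschitz) theorem to the scalar ODE in the variable $v$, treating $x$ as a fixed parameter for $\mu$-a.e.\ $x\in\Rd$. For this we need to verify that, for fixed $x$, the right-hand side $(t,u)\mapsto\X[\sigma,r,\eta](t,u,x)$ is continuous in $t$, Lipschitz in $u$ locally (in fact with at most affine growth in $u$, which then upgrades local existence to global existence on all of $[0,T]$), and that the relevant integrals are finite. Concretely, I would first record that by Theorem~\ref{thm:well_posedness_euler} the pair $(r,\eta)$ exists and is unique, and by~\eqref{eq:norm_eta_bound} we have the uniform bound $\norm{\eta}_{\infty,C_b(\Rddiag)}\le\norm{\eta_0}_{C_b(\Rddiag)}+C_\w=:\bar\eta$.

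\textbf{Growth in $v$.} Using admissibility property~\ref{ass:interp_deg}--\ref{ass:interp_lip} of $\Phi$ and $\Phi(a,b;0)=0$, together with the one-homogeneity, one bounds
\[
|\Phi(\xi,\xi';V_t[r](x,x'))|\le L_\Phi(|\xi|+|\xi'|)\,|V_t[r](x,x')|,
\]
so that, for $\mu$-a.e.\ $x$,
\[
|\X[\sigma,r,\eta](t,\xi,x)|\le L_\Phi\,\bar\eta\int_{\R\times\Rdminus}(|\xi|+|\xi'|)\,|V_t[r](x,x')|\,\dd\sigma_t(\xi',x').
\]
By Cauchy--Schwarz in the $(\xi',x')$ variable, splitting $\dd\sigma_t=\int_{\Rd}\dd\sigma_{t,x'}\dd\mu(x')$ and using~\eqref{eq:velocity_second_moment} for the velocity factor and the second-moment bound $m_1[\gamma_*],m_2[\gamma_*]<\infty$ (finite because $\gamma={\pi_1}_\#\sigma\in C([0,T];\P(\R))$ and $\sigma_t\in\P_2^\mu$, with time-continuity giving a uniform-in-time bound on $[0,T]$) for the $|\xi'|$ factor, one obtains an estimate of the form $|\X[\sigma,r,\eta](t,\xi,x)|\le C(1+|\xi|)$ with $C$ independent of $t$ and of $x$ (for $\mu$-a.e.\ $x$). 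This affine growth is what yields non-explosion and hence a solution defined on all of $[0,T]$ via the standard a priori bound $|v_t(x)|\le(|u|+Ct)e^{Ct}$ and Gr\"onwall.

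\textbf{Lipschitz in $v$ and continuity in $t$.} For uniqueness and local existence, the second Lipschitz estimate in~\ref{ass:interp_lip}, namely $|\Phi(\xi_1,\xi';v)-\Phi(\xi_2,\xi';v)|\le L_\Phi|\xi_1-\xi_2|\,|v|$, gives
\[
|\X[\sigma,r,\eta](t,\xi_1,x)-\X[\sigma,r,\eta](t,\xi_2,x)|\le L_\Phi\,\bar\eta\,|\xi_1-\xi_2|\int_{\R\times\Rdminus}|V_t[r](x,x')|\,\dd\sigma_t(\xi',x'),
\]
and the integral is bounded uniformly in $t$ (by Cauchy--Schwarz and~\eqref{eq:velocity_second_moment}, together with $\sigma_t$ being a probability measure), so $(t,\xi)\mapsto\X(t,\xi,x)$ is globally Lipschitz in $\xi$ uniformly in $t$. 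Continuity in $t$ follows from the assumed continuity in time of $V$, the continuity in time of $\eta$ (Proposition~\ref{prop:L2_apriori_props}), the continuity $t\mapsto\sigma_t$ in $L^2_\mu d_2$ (which controls weak convergence of the disintegrations and of $\sigma_t$ itself), and dominated convergence using the growth bound above; one should remark that the integrand's dependence on $t$ enters through $V_t[r]$, $\eta_t$, and $\sigma_t$, each continuous in $t$, and that the domination is uniform on compact $\xi$-intervals. With continuity in $t$, Lipschitz continuity in $\xi$, and affine growth in $\xi$, Picard--Lindel\"of (in its global form, e.g.\ via the contraction argument on $C([0,T])$ after the a priori bound, or Carath\'eodory's theorem since $V$ need only be $L^1$ in time in the weaker setting) delivers the unique solution $v\in C^1([0,T])$ (or $AC([0,T])$ in the Carath\'eodory case).

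\textbf{Main obstacle.} The routine part is the ODE theory once the bounds are in place; the delicate point is justifying that the constants in the growth and Lipschitz estimates for $\X$ are genuinely uniform in $t$ (so that the solution does not blow up before reaching $T$) and that they hold for $\mu$-a.e.\ $x$ with the exceptional null set independent of everything else --- this requires being careful that $\sigma\in C([0,T];\P_2^\mu)$ provides a \emph{time-uniform} second-moment bound $m_2[\gamma_*]<\infty$ and that~\eqref{eq:velocity_second_moment} is a $\sup$ over $t$. A secondary technical care point is measurability/selection: one wants the map $x\mapsto v_\cdot(x)$ to be jointly measurable, which follows from uniqueness of the ODE solution and standard measurable-dependence-on-parameters results, but should be noted since it is used implicitly when one later forms the push-forward $\sigma_{t,x}=(v_t(\cdot,x))_\#\bar\sigma_x$.
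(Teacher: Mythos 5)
Your proposal is correct and follows essentially the same approach as the paper: verify that, for $\mu$-a.e.\ fixed $x$, the map $\xi\mapsto\X[\sigma,r,\eta](t,\xi,x)$ is globally Lipschitz (uniformly in $t$) using \ref{ass:interp_lip}, the bound~\eqref{eq:norm_eta_bound} on $\eta$, Cauchy--Schwarz and~\eqref{eq:velocity_second_moment}, then verify continuity in $t$, and invoke Cauchy--Lipschitz/Picard--Lindel\"of. The only point where your write-up is lighter than the paper's is the time-continuity step: the paper carefully splits the increment $\X(t,\cdot,x)-\X(s,\cdot,x)$ into a term handled by a Kantorovich--Rubinstein duality estimate in $d_1$ against the disintegrations of $\sigma_t-\sigma_s$ (yielding a bound by $L^2_\mu d_2(\sigma_t,\sigma_s)$) and a term handled by dominated convergence with an explicit integrable majorant in terms of $m_2[\gamma_s]$, whereas you summarize this as ``dominated convergence''; it would be worth making that decomposition explicit, since the domination bound relies on the moment control you correctly identified.
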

\begin{proof}
    We begin by showing that our assumptions imply the map  $\R \ni \xi \mapsto\X[\sigma,r,\eta](t,\xi,x)$ is globally Lipschitz, for any $(t,x)\in[0,T]\times\Rd$. Indeed, for any $a,b \in \R$ we use~\ref{ass:interp_deg},~\ref{ass:interp_lip} from Definition \ref{def:admissible_interpolation}, and~\eqref{eq:velocity_second_moment} to obtain
    \begin{equation}\label{eq:Lipschitz_velocity_field}
    \begin{split}
        |\X[\sigma,r,\eta]&(t,a,x) - \X[\sigma,r,\eta](t,b,x)|\\
        &\leq \  \int_{\R\times \Rdminus} \bigg|(\Phi(a,\xi';V_t[r](x,x'))- \Phi(b,\xi';V_t[r](x,x'))\eta_t(x,x')\bigg|\dd \sigma_t(\xi',x')
        \\
        & \leq  L_\Phi\int_{\R \times \Rdminus} |\eta_t(x,x')|\  |V_t[r](x,x')|\ |a-b| \dd \sigma_{t,x'}(\xi')\dd\mu(x')
        \\
       & \leq L_\Phi C_V^{1/2} \norm{\eta}_{\infty, C_b(\Rddiag)} |a - b|\ .
    \end{split}
    \end{equation}
    Let us check that $t \mapsto\X[\sigma,r,\eta](t,a,x)$ is continuous. Without loss of generality, for any $(a,x) \in \R \times \Rd$ and $s<t \in [0,T]$, consider 
    \begin{align*}
        &|\X[\sigma,r,\eta](t,a,x)  - \X[\sigma,r,\eta](s,a,x)| 
        \\
        &= \bigg|\int_{\R\times \Rdminus}\!\!\Phi(a,\xi';V_t[r](x,x')\eta_t(x,x')\dd \sigma_t(\xi',x')\! -\! \int_{\R\times\Rdminus}\!\!\Phi(a,\xi';V_s[r](x,x'))\eta_s(x,x') \dd \sigma_s(\xi',x') \nonumber\bigg|
        \\
        &\leq \bigg|  \int_{\R\times \Rdminus} \Phi(a,\xi';V_t[r](x,x')\eta_t(x,x')\dd (\sigma_t-\sigma_s)(\xi',x') \bigg|
        \\
        & \qquad + \int_{\R\times \Rdminus}\bigg| \Phi(a,\xi';V_t[r](x,x'))\eta_t(x,x') - \Phi(a,\xi';V_s[r](x,x'))\eta_s(x,x')\bigg| \dd \sigma_s(\xi',x') 
       \\
        & =: I + II\ .
        \end{align*}
    We begin by estimating $I$. Note that for $(\mu  \otimes \mu)$-a.e. $(x,x') \in \Rddiag$, $a\in\R$, and any $t \in [0,T]$ we have 
    \begin{equation}\label{eq:lip_constant_bound}
        \begin{split}
        Lip[\Phi&(a,\cdot,V_t[r](x,x'))\eta_t(x, x')]
        \\
        &:=\sup_{\xi^1\neq\xi^2\in \R}\frac{|(\Phi(a,\xi^1;V_t[r](x,x')-\Phi(a,\xi^2;V_t[r](x,x'))\eta_t|}{|\xi^1-\xi^2|} 
        \\
        & \leq L_\Phi|\eta_t(x,x')||V_t[r](x,x')|
        \\
        & \leq L_\Phi\norm{\eta}_{\infty, C_b(\Rddiag)}|V_t[r](x,x')|
        \ , 
        \end{split}
    \end{equation}
    similarly to \eqref{eq:Lipschitz_velocity_field}. Thus, the Lipschitz constant of the map $\xi \mapsto \Phi(a,\xi;V_t[r](x,x'))\eta_t(x,x')$ is bounded by $L_\Phi\norm{\eta}_{\infty, C_b(\Rddiag)}|V_t[r](x,x')|$ for $(\mu  \otimes \mu)$-a.e. $(x,x') \in \Rd\times\Rd$ and any $t \in [0,T]$. Then we can estimate $I$ by 
    \begin{align*}
       |I| & = \bigg|  \int_{ \Rdminus}\int_{\R} (\Phi(a,\xi';V_t[r](x,x'))\eta_t(x,x')\dd (\sigma_{t,x}-\sigma_{s,x})(\xi')\dd\mu(x') \bigg|
       \\
       &\leq  \int_{\Rdminus} \bigg|\int_{\R} \Phi(a,\xi';V_t[r](x,x'))\eta_t(x,x')\dd(\sigma_{t,x'}-\sigma_{s,x'})(\xi')\bigg| \dd\mu(x')
       \\
       & \leq  \int_{\Rd} Lip[\Phi(a,\cdot,V_t[r](x,x')\eta_t(x,x')]d_1(\sigma_{t,x'},\sigma_{s,x'})\dd\mu(x') 
        \\
       & \leq L_\Phi \norm{\eta}_{\infty,C_b(\Rddiag)} \int_{\Rd}|V_t[r](x,x')|d_1(\sigma_{t,x'},\sigma_{s,x'})\dd\mu(x') 
       \\
       & \leq L_\Phi \norm{\eta}_{\infty,C_b(\Rddiag)} C_V^{1/2}L^2_\mu d_2(\sigma_t,\sigma_s),
    \end{align*}
    where we used Cauchy-Schwarz inequality and \eqref{eq:velocity_second_moment}. By the continuity in time of $\sigma$ we have $I\to 0$ as $t\to s $. 
    Let us now focus on $II$. For any $a \in \R$ and $x \in \Rd$, note that $\Phi_t-\Phi_s:=\Phi(a,\xi';V_t[r](x,x'))\eta_t(x,x') - \Phi(a,\xi';V_s[r](x,x'))\eta_s(x,x')$ is integrable, since
        \begin{align*}
            |II| &\leq \int_{\R\times \Rdminus}\bigg|\Phi(a,\xi';V_s[r](x,x'))\eta_s(x,x')\bigg|\dd\sigma_{s}(\xi',x')
             \\
             & \qquad +\int_{\R\times \Rdminus}\bigg|\Phi(a,\xi';V_t[r](x,x'))\eta_t(x,x')\bigg|\dd\sigma_{s}(\xi',x')
             \\
             & \leq  L_\Phi \norm{\eta}_{\infty,C_b(\Rddiag)}\bigg(|a|\int_{\Rd}|V_s[r](x,x')|\dd\mu(x') + \int_{\Rd}|V_s[r](x,x')|\int_{\R}|\xi'|\dd\sigma_{s,x'}(\xi')\dd\mu(x')
             \\
             &\qquad + |a|\int_{\Rd}|V_t[r](x,x')|\dd\mu(x') + \int_{\Rd}|V_t[r](x,x')|\int_{\R}|\xi'|\dd\sigma_{s,x'}(\xi')\dd\mu(x')\bigg)
             \\
             &=L_\Phi \norm{\eta}_{\infty,C_b(\Rddiag)}\bigg(|a|\int_{\Rd}|V_s[r](x,x')\dd\mu(x') + \int_{\Rd}|V_s[r](x,x')|m_1[\sigma_{s,x'}]\dd\mu(x')
             \\
             & \qquad + |a|\int_{\Rd}|V_t[r](x,x')|\dd\mu(x') + \int_{\Rd}|V_t[r](x,x')|m_1[\sigma_{s,x'}]\dd\mu(x')\bigg)\\
             & \leq 2L_\Phi \norm{\eta}_{\infty,C_b(\Rddiag)} C_V^{1/2} |a|  + L_\Phi \norm{\eta}_{\infty,C_b(\Rddiag)}C_V^{1/2}\bigg(\left(\int_{\Rd}m_1^2[\sigma_{s,x'}]\dd\mu(x')\right)^{1/2}
             \\
             &\qquad +  \left(\int_{\Rd}m_1^2[\sigma_{s,x'}]\dd\mu(x')\right)^{1/2}\bigg)
\end{align*}
\begin{align*}
           & = 2L_\Phi \norm{\eta}_{\infty,C_b(\Rddiag)} C_V^{1/2} |a|  
            \\
            &\quad + L_\Phi \norm{\eta}_{\infty,C_b(\Rddiag)}C_V^{1/2}\!\!\left[\left(\!\int_{\Rd}\!\left(\int_{\R}|\xi'|\dd\sigma_{s,x'}(\xi')\right)^2\dd\mu(x')\!\right)^{1/2} \!\!\!\!+  \left(\!\int_{\Rd}\!\left(\int_{\R}|\xi'|\dd\sigma_{s,x'}(\xi')\right)^2\dd\mu(x')\!\right)^{1/2}\right]
             \\
             & \leq 2L_\Phi \norm{\eta}_\infty C_V^{1/2} |a|  + 2L_\Phi \norm{\eta}_{\infty,C_b(\Rddiag)}C_V^{1/2}\left(\int_{\Rd}m_2[\sigma_{s,x'}]\dd\mu(x')\right)^{1/2} \ 
             \\
             & = 2L_\Phi \norm{\eta}_\infty C_V^{1/2} |a|  + 2L_\Phi \norm{\eta}_{\infty,C_b(\Rddiag)}C_V^{1/2} m_2^{1/2}[\gamma_s] <\infty\ , 
        \end{align*}
    where in the third inequality we used Cauchy-Schwarz and \eqref{eq:velocity_second_moment}, and the final inequality follows from Jensen's inequality.   Then, since $t\mapsto V_t$ and $t \mapsto \eta_t$ are continuous, by the Dominated Convergence Theorem we have that $II \to 0$ as $t \to s$. The limit $s \to t$ follows from an analogous argument. 
    The remainder of the statement follows from the Cauchy-Lipschitz theory for ODEs.  
\end{proof}
The previous lemma guarantees we can consider, for $\mu$-a.e. $x\in\Rd$, a flow map $f_x[\sigma, r,\eta]$, dependent on $\sigma,r,\eta$, which is the solution to
\begin{align}\label{eq:flow_equation}
    &\frac{d}{dt}f_x[\sigma,r,\eta](t,u) =   \X[\sigma,r,\eta](t,f_x[\sigma,r,\eta](t,u), x)\ , 
    \\
    & f_x[\sigma,r,\eta](0,u) = u \in \R \ , \nonumber
\end{align}
In particular, for an initial time $s>0$, we write $f_x[\sigma,r,\eta]((s,t),u)$, meaning that $f_x[\sigma,r,\eta](s,u) = u$. Before establishing the well-posedness of the nonlinear problem \eqref{eq:vlasov_equation} we need to study some further properties of the flow.

\begin{lemma}\label{lemma:properties_of_flow}
    Let $V:[0,T]\times L^2_\mu(\Rd)  \to \V^{as}(\Rddiag)$ be continuous in time and such that it satisfies \eqref{eq:velocity_second_moment}~-~\eqref{eq:velocity_Lipschitz_second_moment}. Let $\sigma \in C([0,T], \P^{\mu}_2(\R{\times\Rd}))$ with ${\pi_1}_\#\sigma=\gamma$ and $(r,\eta)$ be the solution of \eqref{eq:euler}. The flow map $f_x[\sigma,r,\eta]$ defined in \eqref{eq:flow_equation} satisfies the properties below.
    \begin{enumerate}
        \item Linear growth: for all $T>0$, there exists a constant $\widetilde{C}=C(\eta_0,m_2[\gamma]) > 0$ such that
        \[
        |f_x[\sigma,r,\eta](t,u)| \leq e^{\widetilde{C}T}(1 + |u|),
        \]
        for any $t\in[0,T]$, $u\in\R$, $\mu$-a.e. $x\in\Rd$.
        \item Lipschitz in $u$: there exists a constant $\bar{C}=C(\eta_0,V,\Phi) > 0$ such that for any $t\in[0,T]$ and any  $u,\ell \in \R$
        \[
        |f_x[\sigma,r,\eta](t,u) - f_x[\sigma, r,\eta](t,\ell)| \leq  e^{\bar{C}T}|u-\ell| \ .
        \]
        \item Time continuity: the map $t \mapsto f_x[\sigma,r,\eta](t,\cdot)$ is continuous.
    \end{enumerate}
\end{lemma}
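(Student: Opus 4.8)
The plan is to obtain all three properties from the integral form of the flow equation~\eqref{eq:flow_equation},
\[
f_x[\sigma,r,\eta](t,u)=u+\int_0^t\X[\sigma,r,\eta](s,f_x[\sigma,r,\eta](s,u),x)\,\dd s,
\]
by Grönwall-type arguments, after first recording an \emph{affine} bound on the velocity field that is not explicitly stated in Lemma~\ref{lemma:existence_of_flow}. Using $\Phi(0,0;v)=0$ from~\ref{ass:interp_deg} and the second inequality of~\ref{ass:interp_lip} one has $|\Phi(\xi,\xi';v)|\le L_\Phi(|\xi|+|\xi'|)|v|$; integrating~\eqref{eq:mean_field} against $\sigma_t$, bounding $|\eta_t|\le\norm{\eta}_{\infty,C_b(\Rddiag)}$, applying Cauchy--Schwarz in $x'$ with~\eqref{eq:velocity_second_moment}, and using Jensen's inequality together with $m_2[\gamma_t]=\int_{\Rd}m_2[\sigma_{t,x'}]\,\dd\mu(x')$, gives for $\mu$-a.e.\ $x\in\Rd$, all $t\in[0,T]$ and $\xi\in\R$
\[
|\X[\sigma,r,\eta](t,\xi,x)|\le L_\Phi\norm{\eta}_{\infty,C_b(\Rddiag)}C_V^{1/2}\bigl(m_2^{1/2}[\gamma_*]+|\xi|\bigr)=:C_0+L|\xi|,
\]
where $m_2[\gamma_*]=\sup_{t\in[0,T]}m_2[\gamma_t]<\infty$ because $\sigma\in C([0,T],\P_2^\mu(\R\times\Rd))$ makes $t\mapsto m_2[\gamma_t]=\bigl(L^2_\mu d_2(\sigma_t,\delta_0\otimes\mu)\bigr)^2$ continuous, hence bounded on $[0,T]$.

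For~(1), inserting $|\X(s,f_x(s,u),x)|\le C_0+L|f_x(s,u)|$ into the integral equation yields $|f_x(t,u)|\le|u|+C_0 t+L\int_0^t|f_x(s,u)|\,\dd s$, and Grönwall gives $|f_x(t,u)|\le(|u|+C_0 t)e^{Lt}$; bounding $|u|+C_0 t\le(1+|u|)e^{C_0 T}$ produces $|f_x(t,u)|\le e^{(C_0+L)T}(1+|u|)$, so $\widetilde C=C_0+L$, which by~\eqref{eq:norm_eta_bound} depends only on $\eta_0$ and on $m_2[\gamma]$ (besides the fixed constants $L_\Phi,C_V,C_\w$). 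For~(2), subtracting the integral equations for two base points $u,\ell$ and using the global Lipschitz-in-$\xi$ bound on $\X$ already proved in~\eqref{eq:Lipschitz_velocity_field} gives $|f_x(t,u)-f_x(t,\ell)|\le|u-\ell|+L\int_0^t|f_x(s,u)-f_x(s,\ell)|\,\dd s$ with the same $L=L_\Phi\norm{\eta}_{\infty,C_b(\Rddiag)}C_V^{1/2}$; Grönwall yields $e^{LT}|u-\ell|$, so $\bar C=L$, depending on $\eta_0$, $V$ and $\Phi$.

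Property~(3) is then immediate: by~(1) the trajectory $s\mapsto f_x(s,u)$ stays bounded on $[0,T]$, so by the affine bound $s\mapsto\X(s,f_x(s,u),x)$ is bounded on $[0,T]$ and hence $t\mapsto f_x(t,u)=u+\int_0^t\X(s,f_x(s,u),x)\,\dd s$ is Lipschitz in $t$, in particular continuous; combined with the equi-Lipschitz estimate from~(2) this gives continuity of $t\mapsto f_x(t,\cdot)$ locally uniformly in $u$. I do not expect a real obstacle: everything collapses to Grönwall once the affine bound is available, and the only delicate points are keeping all $V$-integrals uniformly controlled in $x$ by $C_V$ via~\eqref{eq:velocity_second_moment}, and using finiteness of $m_2[\gamma_*]$ --- the one place where the second-moment hypothesis on $\sigma$ is essential.
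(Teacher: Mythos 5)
Your proof is correct and follows essentially the same route as the paper: an affine Cauchy--Schwarz/Jensen bound on $\X$ against $\sigma_t$ for linear growth, the already-established Lipschitz-in-$\xi$ estimate~\eqref{eq:Lipschitz_velocity_field} for item~(2), Grönwall for both, and boundedness of the integrand for time continuity. The only cosmetic difference is bookkeeping: the paper absorbs $C_0$ and $L$ into a single $\widetilde{C}$ and runs Grönwall on $1+|f_x|$ to land directly on $e^{\widetilde{C}T}(1+|u|)$, whereas you keep them separate and perform a small algebraic absorption afterwards --- both are valid.
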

\begin{proof}
    We begin by proving (1). By definition of the flow we have
    \begin{align*}
    f_x[\sigma,r,\eta](t,u) = u + \int_0^t\X[\sigma,r,\eta](s,f_x[\sigma,r,\eta](s,u),x) \dd s  \ . 
    \end{align*}
    By means of~\ref{ass:interp_deg},~\ref{ass:interp_lip} in Definition~\ref{def:admissible_interpolation},
    \begin{align*}
        \big|\X[\sigma, r,\eta] &(s,f_x[\sigma,r,\eta](s,u),x)\big|
        \\
        & \leq \int_{\R\times \Rdminus} \bigg|\Phi(f_x[\sigma,r,\eta](s,u),\xi';V_s[r](x,x'))\eta_s(x,x')\bigg|\dd \sigma_s(\xi',x') 
        \\
        & \leq L_{\Phi}\norm{\eta}_{\infty, C_b(\Rddiag)} C_V^{1/2}\bigg(|f_x[\sigma,r,\eta](s,u)|\! +\! \left(\int_{\Rd}\!\!m_1^2[\sigma_{s,x'}]\dd\mu(x')\right)^{1/2} \!\bigg)
        \\
        & \leq L_{\Phi}\norm{\eta}_{\infty, C_b(\Rddiag)} C_V^{1/2}\bigg(|f_x[\sigma,r,\eta](s,u)| + m_2[\gamma_s]^{1/2}\bigg)
        \\
        &\leq L_{\Phi}\norm{\eta}_{\infty, C_b(\Rddiag)} C_V^{1/2}\bigg(|f_x[\sigma,r,\eta](s,u)| + m_2[\gamma_*]^{1/2}\bigg)\ 
        \\
        & \leq L_\Phi \left(\|\eta_0\|_{\infty} + C_\w \right) C_V^{1/2}\bigg(|f_x[\sigma,r,\eta](s,u)| + m_2[\gamma_*]^{1/2}\bigg)
        \\
        & \leq {\widetilde{C}}(1 + |f_x[\sigma,r,\eta](s,u)|)\ , 
    \end{align*}
   for a constant $\widetilde{C}=C(\eta_0,m_2(\gamma))$, where we used \eqref{eq:norm_eta_bound} in the penultimate inequality. An application of Gr\"onwall's inequality yields
    \begin{align*}
        |f_x[\sigma,r,\eta](t,u)| \leq e^{\widetilde{C}T}(1 + |u|).
    \end{align*}
To prove (2), consider $u,\ell \in \R$ and use \eqref{eq:Lipschitz_velocity_field} from Lemma \ref{lemma:existence_of_flow} in order to have
    \begin{align*}
        |f_x[\sigma, r,\eta](t,u) - f_x[\sigma, r,\eta](t,\ell)|& \leq  |u - \ell| + \int_0^t|\X[\sigma, r,\eta](s,f_x[\sigma, r,\eta](s,u),x)
        \\
        & \qquad \qquad \qquad \qquad - \X[\sigma, r,\eta](s,f_x[\sigma, r,\eta](s,\ell),x)|\dd s
        \\
        & \leq |u-\ell|\\
        &\qquad+ L_\Phi C_V^{1/2} \norm{\eta}_{\infty, C_b(\Rddiag)} \int_0^t|f_x[\sigma, r,\eta](s,\ell) - f_x[\sigma, r,\eta](s,u)| \dd s .
    \end{align*}
    An application of Gr\"onwall's inequality yields
    \begin{align*}
      |f_x[\sigma, r,\eta](t,u) - f_x[\sigma, r,\eta](t,\ell)| &\leq \exp\left(L_\Phi C_V^{1/2} \norm{\eta}_{\infty, C_b(\Rddiag)} T\right)|u-\ell| \ ,
      \\
      & \leq \exp\left(L_\Phi C_V^{1/2} \left(\|\eta_0\|_{\infty} + C_\w \right)T\right)|u-\ell|
      \\
      & \leq e^{\bar{C}T}|u - \ell|
    \end{align*}
for a constant $\bar{C}=C(\eta_0,V,\Phi) > 0$, which concludes the proof of (2). The final statement follows from the definition of the flow and the time continuity of $\X[\sigma, r,\eta]$ proved in Lemma \ref{lemma:existence_of_flow}. 
\end{proof}
The flow map depends on a probability measure $\sigma$ and a given co-evolving graph $\mathcal{G}_t(\mu,\eta_t[r])$. To conclude our argument, we estimate the difference of the flow maps depending on different measures, $\sigma^1,\ \sigma^2$. To this end, let us consider the space $C([0,T], \P^\mu_2(\R\times \Rd))$ equipped with the metric 
\[
\mathcal{D}_{\mu,d}(\nu^1,\nu^2):=\sup_{t\in[0,T]}L^2_\mu d_2(\nu^1_t,\nu^2_t) = \sup_{t\in[0,T]} \left(\int_{\Rd} d^2_2(\nu^1_{t,x'},\nu^2_{t,x'})\dd \mu(x')\right)^{1/2}, 
\]
for $\nu^1,\nu^2 \in C([0,T], \P^\mu_2(\R\times\Rd))$, so that $(C([0,T], \P^\mu_2(\R\times \Rd)), \mathcal{D}_{\mu,d})$ is a complete metric space, see~\cite[Proposition A.0.1]{mikol2024a} for a similar proof. We obtain the following estimate. 
\begin{lemma}\label{lemma:flow_contraction}
    Let $\sigma^i \in C([0,T],\P^\mu_2(\R\times\Rd))$, with ${\pi_1}_\#\sigma^i=\gamma^i$ and $(r,\eta)$ be the solution of \eqref{eq:euler} with initial condition $(r_0,\eta_0)$. Let  $V:[0,T]\times  L^2_\mu(\Rd) \to \V^{as}(\Rddiag)$ be continuous in time satisfying \eqref{eq:velocity_second_moment} and~\eqref{eq:velocity_Lipschitz_second_moment}. Consider two flows $f^i_x[\sigma^i, r, \eta]$, for $\mu$-a.e. $x \in \Rd$, solutions to~\eqref{eq:flow_equation}, for $i =1,2$. Then, it holds 
    \begin{align}\label{eq:sq_flow_bound}
        |f^1_x[\sigma^1,r,\eta](t,u) - f_x^2[\sigma^2,r,\eta](t,u)|^2\leq\bar{C}_TT^2\mathcal{D}_{\mu,d}^2(\sigma^1,\sigma^2),
\end{align}
where $\bar{C}_T=L_\Phi^2\left( \norm{\eta_0}_{\infty}+C_\omega\right)^2 C_V\exp\{ L_\Phi^2 \left( \norm{\eta_0}_{\infty}+C_\omega\right)^2 C_V \ T^2\}$. Consequently, for any measure $\nu \in \P_2^\mu(\R\times\Rd)$, with disintegration $\nu = \int_{\Rd}\nu_x \dd\mu(x)$, we have
    \begin{equation}\label{eq:contraction}
        \D_{\mu,d}(f^1[\sigma^1,r,\eta]_\# \nu, f^2[\sigma^2,r,\eta]_\# \nu) \leq \tilde{C}_TT \D_{\mu,d}(\sigma^1,\sigma^2) \,
    \end{equation}
where $\tilde{C}_T=\bar{C}_T^{\frac{1}{2}}$ from above.
\end{lemma}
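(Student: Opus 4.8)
The plan is to prove the pointwise-in-$x$ estimate \eqref{eq:sq_flow_bound} by a Gr\"onwall argument on the difference of the two flows, and then to deduce the contraction \eqref{eq:contraction} by disintegrating the push-forwards and using the obvious coupling. Throughout I abbreviate $f^i_x[\sigma^i,r,\eta]$ as $f^i_x$. For fixed $u\in\R$ and $\mu$-a.e.\ $x\in\Rd$, I would subtract the integral formulations of \eqref{eq:flow_equation} for $f^1_x$ and $f^2_x$ and insert the cross term $\X[\sigma^1,r,\eta](s,f^2_x(s,u),x)$, which gives
\[
f^1_x(t,u)-f^2_x(t,u)=\int_0^t\big(A_s+B_s\big)\,\dd s,
\]
with $A_s:=\X[\sigma^1,r,\eta](s,f^1_x(s,u),x)-\X[\sigma^1,r,\eta](s,f^2_x(s,u),x)$ and $B_s:=\X[\sigma^1,r,\eta](s,f^2_x(s,u),x)-\X[\sigma^2,r,\eta](s,f^2_x(s,u),x)$. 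The term $A_s$ is immediately controlled by the Lipschitz-in-$\xi$ bound \eqref{eq:Lipschitz_velocity_field} of Lemma~\ref{lemma:existence_of_flow}, so that $|A_s|\le L_\Phi C_V^{1/2}\norm{\eta}_{\infty,C_b(\Rddiag)}\,|f^1_x(s,u)-f^2_x(s,u)|$.

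The term $B_s$ is the crux of the argument. For fixed $a\in\R$, the map $\xi'\mapsto\Phi(a,\xi';V_s[r](x,x'))\eta_s(x,x')$ is Lipschitz with constant at most $L_\Phi\norm{\eta}_{\infty,C_b(\Rddiag)}|V_s[r](x,x')|$, exactly as in \eqref{eq:lip_constant_bound} via property~\ref{ass:interp_lip} of Definition~\ref{def:admissible_interpolation}; hence Kantorovich--Rubinstein duality, applied to the disintegrations $\sigma^1_{s,x'}$ and $\sigma^2_{s,x'}$ for each $x'$, yields
\[
|B_s|\le L_\Phi\norm{\eta}_{\infty,C_b(\Rddiag)}\int_{\Rdminus}|V_s[r](x,x')|\,d_1\big(\sigma^1_{s,x'},\sigma^2_{s,x'}\big)\,\dd\mu(x').
\]
A Cauchy--Schwarz step in $x'$, the velocity bound \eqref{eq:velocity_second_moment}, and $d_1\le d_2$ then give $|B_s|\le L_\Phi\norm{\eta}_{\infty,C_b(\Rddiag)}C_V^{1/2}\,L^2_\mu d_2(\sigma^1_s,\sigma^2_s)\le L_\Phi\norm{\eta}_{\infty,C_b(\Rddiag)}C_V^{1/2}\,\D_{\mu,d}(\sigma^1,\sigma^2)$; this is the step where the choice of the distance $L^2_\mu d_2$ is essential, since it turns the fibrewise $d_1$-bound into the global distance $\D_{\mu,d}$. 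Writing $g_x(t):=|f^1_x(t,u)-f^2_x(t,u)|$ and $\kappa:=L_\Phi\norm{\eta}_{\infty,C_b(\Rddiag)}C_V^{1/2}$, the two estimates combine into $g_x(t)\le\kappa t\,\D_{\mu,d}(\sigma^1,\sigma^2)+\kappa\int_0^t g_x(s)\,\dd s$; squaring, using Cauchy--Schwarz on the time integral, bounding $\norm{\eta}_{\infty,C_b(\Rddiag)}\le\norm{\eta_0}_\infty+C_\w$ by \eqref{eq:norm_eta_bound}, and applying Gr\"onwall's inequality to $t\mapsto g_x(t)^2$ yields \eqref{eq:sq_flow_bound} with the stated constant $\bar{C}_T$, uniformly in $u\in\R$ and $\mu$-a.e.\ $x$.

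Finally, to pass from \eqref{eq:sq_flow_bound} to \eqref{eq:contraction} I would observe that the map $(u,x)\mapsto(f^i_x(t,u),x)$ leaves the $\Rd$-component unchanged, so $f^i[\sigma^i,r,\eta]_\#\nu\in\P_2^\mu(\R\times\Rd)$ — finiteness of the second moment following from the linear-growth bound of Lemma~\ref{lemma:properties_of_flow}(1) — with disintegration with respect to $\mu$ equal, for $\mu$-a.e.\ $x$, to $(f^i_x(t,\cdot))_\#\nu_x$. Using $\big(f^1_x(t,\cdot),f^2_x(t,\cdot)\big)_\#\nu_x$ as a coupling of these two fibre measures,
\[
d_2^2\big((f^1_x(t,\cdot))_\#\nu_x,(f^2_x(t,\cdot))_\#\nu_x\big)\le\int_\R|f^1_x(t,u)-f^2_x(t,u)|^2\,\dd\nu_x(u)\le\bar{C}_T\,T^2\,\D_{\mu,d}^2(\sigma^1,\sigma^2),
\]
by \eqref{eq:sq_flow_bound} and $\nu_x\in\P(\R)$; integrating in $x$ against $\mu$ (recalling $\mu(\Rd)=1$), taking the supremum over $t\in[0,T]$, and a square root then produce \eqref{eq:contraction} with $\tilde{C}_T=\bar{C}_T^{1/2}$. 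I expect the estimate on $B_s$ — transferring $\X[\sigma^1,r,\eta]-\X[\sigma^2,r,\eta]$ into the metric $\D_{\mu,d}$ via the argument-wise Lipschitzness of $\Phi$ paired with \eqref{eq:velocity_second_moment} — to be the only delicate point; the Gr\"onwall estimate and the coupling computation are routine.
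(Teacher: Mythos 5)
Your proposal is correct and follows essentially the same route as the paper: your $A_s$ and $B_s$ are precisely the paper's $I$ and $II$, the bound on $B_s$ via the Lipschitz constant of $\xi'\mapsto\Phi(\cdot,\xi';V)\eta$, Kantorovich--Rubinstein duality, Cauchy--Schwarz and $d_1\le d_2$ is the same chain of estimates, and the coupling $(f^1_x(t,\cdot),f^2_x(t,\cdot))_\#\nu_x$ for the push-forward step matches the paper's transference plan $\Pi_{t,x}$. The only (cosmetic) divergence is that you square after combining $A_s$ and $B_s$ while the paper squares $I$ and $II$ individually, which affects the constant by an unimportant factor of $2$ that the paper itself does not track carefully.
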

\begin{proof}
To improve the readability we use the shorthand notation $f^i_x := f^i_x[\sigma^i,r,\eta]$, for $i=1,2$. Let us start by estimating the difference of the two flows: 
\begin{align*}
    |f^1_x(t,u) - f_x^2(t,u)|  
    =&  \bigg|\int_0^t \int_{\R\times\Rdminus} \Phi(f_x^1(s,u),\xi';V_s[r_s](x,x'))\eta_s(x,x')\dd \sigma^1_s(\xi',x') 
\\
&\qquad - \int_{\R\times\Rdminus} \Phi(f_x^2(s,u),\xi';V_s[r_s](x,x'))\eta_s(x,x')\dd \sigma^2_s(\xi',x') \dd s \bigg|
\\
\leq &\ \int_0^t\bigg|\int_{\R\times\Rdminus}\bigg( \Phi(f_x^1(s,u),\xi';V_s[r](x,x'))\eta_s(x,x')
\\
& \qquad - \Phi(f_x^2(s,u),\xi';V_s[r_s](x,x'))\eta_s(x,x')\bigg)\dd \sigma_s^1(\xi',x')\bigg|  \dd s 
\\
&   + \int_0^t  \bigg|\int_{\R\times\Rdminus}\Phi(f_x^2(s,u),\xi';V_s[r_s](x,x'))\eta_s(x,x')\dd (\sigma_s^1-\sigma^2_s)(\xi',x') \bigg|\dd s 
\\
=: & I + II. 
\end{align*}
We bound the term $I$ analogous calculation to the ones for \eqref{eq:term_1_euler} to obtain
\begin{align*}
    |I|&= \int_0^t\bigg| \int_{\R\times\Rdminus}\!\!\!\bigg( \Phi(f_x^1(s,u),\xi';V_s[r_s](x,x')) \!
    \\
    & \qquad \qquad -\! \Phi(f_x^2(s,u),\xi';V_s[r_s](x,x'))\bigg) \eta_s(x,x') \dd \sigma_s^1(\xi',x')\bigg|  \dd s
\\
    & \leq  L_\Phi \norm{\eta}_{\infty,C_b(\Rddiag)}\int_0^t\int_{\Rd}|V_s[r_s](x,x')||f_x^1(s,u)-f_x^2(s,u)|\dd \mu(x')
    \\
    & \leq L_\Phi \norm{\eta}_{\infty,C_b(\Rddiag)} C_V^{1/2} \int_0^t|f_x^1(s,u)-f^2_x(s,u)|\dd s,
\end{align*}
whence
\[
|I|^2 \leq L_\Phi^2 \norm{\eta}^2_{\infty,C_b(\Rddiag)} C_V \ T\int_0^t|f_x^1(s,u)-f^2_x(s,u)|^2\dd s \ .
\]
On the other hand, for $II$, we have  
\begin{equation}\label{eq:lemma3.3_bound_II}
\begin{split}
|II | &\leq \int_{0}^t\int_{\R^d} \bigg|\int_{\R} \Phi(f_x^2(s,u),\xi';V_s[r_s](x,x'))\eta_s(x,x')\dd(\sigma^1_{s,x'}-\sigma^2_{s,x'})(\xi')\bigg| \dd\mu(x')\dd s
       \\
       & \leq  \int_0^t\int_{\Rd} Lip[\Phi(f_x^2(s,u),\cdot,V_s[r_s](x,x')\eta_s(x,x')]d_1(\sigma^1_{s,x'},\sigma^2_{s,x'})\dd\mu(x') \dd s
       \\
       &\leq L_\Phi \norm{\eta}_{\infty,C_b(\Rddiag)} \int_0^t\int_{\Rd}|V_s[r_s](x,x')|d_1(\sigma^1_{s,x'},\sigma^2_{s,x'})\dd\mu(x') \dd s
       \\
       &\leq L_\Phi \norm{\eta}_{\infty,C_b(\Rddiag)}C_V^{1/2} \int_0^t\left(\int_{\Rd}d^2_1(\sigma^1_{s,x'},\sigma^2_{s,x'})\dd\mu(x')\right)^{1/2} \dd s
       \\
       &\leq L_\Phi \norm{\eta}_{\infty,C_b(\Rddiag)}C_V^{1/2}\int_0^t L^2_\mu d_2(\sigma^1_{s},\sigma^2_{s}) \dd s\\
       &\leq L_\Phi \norm{\eta}_{\infty,C_b(\Rddiag)}C_V^{1/2} T\mathcal{D}_{\mu,d}(\sigma^1,\sigma^2) 
       \end{split}
\end{equation}
where the second and third inequality follow from the definition by duality of the 1-Wasserstein distance and \eqref{eq:lip_constant_bound}, the fourth inequality follows from Cauchy-Schwarz. Squaring both sides and taking the supremum over time yields
\begin{align*}
    |II|^2 &\leq L_\Phi^2 \norm{\eta}^2_{\infty,C_b(\Rddiag)}C_V \mathcal{D}_{\mu,d}^2(\sigma^1,\sigma^2)T^2 \ . 
\end{align*}
Thus, collecting all the terms we get that 
\begin{align*}
    |f^1_x(t,u) - f_x^2(t,u)|^2    
      \leq \  &L_\Phi^2 \norm{\eta}^2_{\infty,C_b(\Rddiag)} C_V \ T\int_0^t |f_x^1(s,u)-f^2_x(s,u)|^2\dd s
    \\
    & + L_\Phi^2 \norm{\eta}^2_{\infty,C_b(\Rddiag)}C_V\mathcal{D}_{\mu,d}^2(\sigma^1,\sigma^2)T^2,
\end{align*}
and an application of Gr\"onwall's inequality yields 
\begin{equation}\label{eq:flow_sq_bound}
\begin{split}
|f^1_x(t,u) - f_x^2(t,u)|^2 &\leq  L_\Phi^2\norm{\eta}^2_{\infty,C_b(\Rddiag)}C_V\mathcal{D}_{\mu,d}^2(\sigma^1,\sigma^2)T^2\exp\{ L_\Phi^2 \norm{\eta}^2_{\infty,C_b(\Rddiag)} C_V \ T^2\}\\
&\leq \bar{C}_TT^2\mathcal{D}_{\mu,d}^2(\sigma^1,\sigma^2),
\end{split}
\end{equation}
where $\bar{C}_T=L_\Phi^2\left( \norm{\eta_0}_{\infty}+C_\omega\right)^2 C_V\exp\{ L_\Phi^2 \left( \norm{\eta_0}_{\infty}+C_\omega\right)^2 C_V \ T^2\}$. The first statement is therefore proven. To obtain the second estimate, consider the transference plan $\Pi_{t,x} = (f_x^1 \times f_x^2)_\# \nu_x$. For $\mu$-a.e. $x \in \Rd$ we have that 
\begin{align*}
    d^2_2({f_x}^1_\#{\nu_x}, {f_x}^2_\#{\nu_x}) &\leq \int_{\R\times \R} |a-b|^2 \dd \Pi_{t,x}(a,b)
    \\
    & \leq \int_{\R}|f_x^1(t,u) - f_x^2(t,u)|^2 \dd \nu_x(u) 
    \\
    & \leq \bar{C}_TT^2\mathcal{D}_{\mu,d}^2(\sigma^1,\sigma^2),
\end{align*}
where the final inequality follows from \eqref{eq:flow_sq_bound}. Integrating with respect to $\mu$, taking the square root on both sides and the supremum in time gives 
\[
\D_{\mu,d} (f^1_\#\nu,f^2_\#\nu) \leq \tilde{C}_TT \mathcal{D}_{\mu,d}(\sigma^1,\sigma^2),
\]
where $\tilde{C}_T=\bar{C}_T^{\frac{1}{2}}$.
\end{proof}
 In order to prove well-posedness of \eqref{eq:vlasov_equation} through its disintegrated version, we first need to study well-posedness for solutions of a linear version of this problem, namely, given $\sigma\in C([0,T];\P_2^\mu(\R\times\Rd))$, 
\begin{equation}\label{eq:disintegrated_linear_vlasov_equation}
     \begin{split}
     &\partial_t \tilde{\sigma}_{t,x} + \partial_{\xi}(\tilde{\sigma}_{t,x}\X[\sigma,\rho,\eta]) = 0,  
     \\
     & \tilde{\sigma}_{0,x} = \bar{\sigma}_x \in \P(\R),
     \end{split}
 \end{equation}
 for $\mu$-a. e. $x\in\Rd$ and $\bar\sigma=\int_{\Rd}\bar{\sigma}_x\dd\mu(x)$. 
We prove existence and uniqueness of solutions to~\eqref{eq:disintegrated_linear_vlasov_equation} by duality, that is, we first consider classical solutions to the transport equation, $\mu\text{-a.e. } x \in \Rd$: 
\begin{equation}\label{eq:dual_linear_vlasov}
    \begin{cases}
    \partial_t \varphi + \partial_\xi \varphi\X[\sigma, r, \eta](t,\xi,x) = 0, \qquad t<T, \, (\xi,x)\in\R\times\Rd
    \\
    \varphi(T,\xi,x) = \psi(\xi,x) \in C^1_c(\R\times \Rd).
    \end{cases}
\end{equation}
The method of characteristics provides the following result; see~\cite[Chapter 3]{evans2022partial} for further details. 
\begin{proposition}
    Let ${\sigma}\in C([0,T],\P^\mu_2(\R\times \Rd))$. The problem \eqref{eq:dual_linear_vlasov} has a unique classical solution  $\varphi(t,u,x) = \psi(f_{x}[\sigma,r,\eta]((0,t), u)) \in C^1_c([0,T]\times \R)$, for $\mu$-a.e. $x \in \Rd$, where $f_x$ is as in~\eqref{eq:flow_equation}. 
\end{proposition}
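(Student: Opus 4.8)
The plan is to run the classical method of characteristics, exactly as in~\cite[Chapter~3]{evans2022partial}, with the flow of~\eqref{eq:flow_equation} playing the role of the characteristic curves. First I would note that the characteristic field of~\eqref{eq:dual_linear_vlasov} is precisely $\xi\mapsto\X[\sigma,r,\eta](t,\xi,x)$, which by~\eqref{eq:Lipschitz_velocity_field} is globally Lipschitz in $\xi$ with a constant uniform in $(t,x)$, is continuous in $t$, and is sublinear in $\xi$ by~\ref{ass:interp_deg}--\ref{ass:interp_lip}; hence the Cauchy--Lipschitz theory applies starting from \emph{any} initial time $s\in[0,T]$, not just $s=0$, yielding the two-time flow $f_x[\sigma,r,\eta]((s,t),\cdot)$ for all $s,t\in[0,T]$ and $\mu$-a.e. $x\in\Rd$. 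By uniqueness of ODE solutions this flow satisfies the semigroup identity $f_x((r,t),\cdot)=f_x((s,t),\cdot)\circ f_x((r,s),\cdot)$, so each $f_x((s,t),\cdot)$ is a homeomorphism of $\R$ with inverse $f_x((t,s),\cdot)$; these facts are immediate once Lemma~\ref{lemma:existence_of_flow} and Lemma~\ref{lemma:properties_of_flow} are in hand.

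Next I would set $\varphi(t,\xi,x):=\psi\bigl(f_x[\sigma,r,\eta]((t,T),\xi),x\bigr)$, that is, the value of the terminal datum at the point reached at time $T$ by the characteristic issued from $(t,\xi)$, and verify the three required features. The terminal condition is immediate since $f_x((T,T),\cdot)=\mathrm{id}$. To see that $\varphi$ solves the PDE, fix $(t_0,\xi_0)$, let $\chi(s):=f_x((t_0,s),\xi_0)$ so that $\dot\chi(s)=\X[\sigma,r,\eta](s,\chi(s),x)$, and use the semigroup identity to rewrite $\varphi(s,\chi(s),x)=\psi\bigl(f_x((t_0,T),\xi_0),x\bigr)$, which is independent of $s$; differentiating in $s$ and evaluating at $s=t_0$ gives $\partial_t\varphi+\X[\sigma,r,\eta]\,\partial_\xi\varphi=0$ at $(t_0,\xi_0,x)$. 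For the support: if $\varphi(t,\xi,x)\neq0$ then $\xi\in f_x((T,t),\supp\psi(\cdot,x))$, and the linear-growth and Lipschitz bounds of Lemma~\ref{lemma:properties_of_flow}(1)--(2) confine these sets to a single compact set uniformly in $t\in[0,T]$, so $\varphi$ is compactly supported with support uniform in time. Uniqueness then follows by the same characteristic argument: if $\varphi_1,\varphi_2$ both solve the problem, then $w:=\varphi_1-\varphi_2$ solves the homogeneous equation with zero terminal data, and since $s\mapsto w(s,\chi(s),x)$ has vanishing derivative and vanishes at $s=T$, it is identically zero along every characteristic, whence $w\equiv0$.

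The step I expect to be the main obstacle is the assertion that $\varphi$ is genuinely $C^1$: an admissible flux interpolation $\Phi$ is, by Definition~\ref{def:admissible_interpolation}, only argument-wise Lipschitz, so $\X[\sigma,r,\eta](t,\cdot,x)$ need not be $C^1$, and then the flow $f_x((t,T),\cdot)$ --- hence the composition $\psi\circ f_x$ --- is in general only Lipschitz. I would resolve this either by invoking the additional assumption that $\Phi$ is $C^1$ in its first argument (so that the smooth-dependence theory for ODEs upgrades $f_x$ to $C^1$ in $(t,\xi)$, with $\partial_t f_x((t,T),\xi)=-\partial_\xi f_x((t,T),\xi)\,\X[\sigma,r,\eta](t,\xi,x)$, after which the chain rule gives $\varphi\in C^1_c$), or by reading ``classical solution'' in the locally Lipschitz / Carath\'eodory sense, which is all that the duality argument in the sequel requires. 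The remaining bookkeeping --- uniformity of the Lipschitz and growth constants in $x$, so that the construction goes through for $\mu$-a.e.\ $x$ simultaneously --- is already contained in Lemmas~\ref{lemma:existence_of_flow}--\ref{lemma:properties_of_flow}.
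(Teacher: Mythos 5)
The paper's own ``proof'' is a one-line citation to Evans, so your write-up, being the same method of characteristics, is the intended argument and is sound in outline. Two remarks are in order.

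First, a point in your favour: you define $\varphi(t,\xi,x)=\psi\bigl(f_x[\sigma,r,\eta]((t,T),\xi),x\bigr)$, whereas the proposition's statement (and the test function used afterwards in Proposition~\ref{prop:well_posedness_linear_vlasov}) writes $\psi\bigl(f_x[\sigma,r,\eta]((0,t),u)\bigr)$. With the paper's convention --- $f_x((s,t),u)$ is the value at time $t$ of the characteristic equal to $u$ at time $s$ --- the stated formula gives $\varphi(T,\xi,x)=\psi\bigl(f_x((0,T),\xi)\bigr)$, which is \emph{not} $\psi(\xi,x)$, so it fails the terminal condition; moreover $\partial_t\varphi+\X\,\partial_\xi\varphi$ does not vanish for that choice. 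Your formula $f_x((t,T),\cdot)$ is the correct one (it satisfies both the terminal condition and, via the semigroup identity, the PDE), so the expression in the statement appears to be a typo that your proof quietly corrects.

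Second, the regularity gap you flag is real and is not addressed in the paper. By Definition~\ref{def:admissible_interpolation}, $\Phi$ is only argument-wise Lipschitz, so $\xi\mapsto\X[\sigma,r,\eta](t,\xi,x)$ is Lipschitz but not necessarily $C^1$; consequently $\xi\mapsto f_x((t,T),\xi)$, and hence $\psi\circ f_x$, is in general only Lipschitz, not $C^1$. The conclusion ``$\varphi\in C^1_c$'' therefore silently uses that $\Phi$ is $C^1$ in its first argument. This holds for $\Phi_{\text{upwind}}$ (which is affine in its first argument, making $\X$ affine in $\xi$ and the flow smooth), but fails for, e.g., $\Phi_{\text{prod}}$ with $\phi(a,b)=\max\{a,b\}$. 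Your two proposed fixes --- assume $\Phi\in C^1$ in its first argument, or interpret ``classical solution'' in the Lipschitz/Carath\'eodory sense sufficient for the duality argument of Proposition~\ref{prop:well_posedness_linear_vlasov} --- are both adequate and worth stating explicitly.
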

For a given $\sigma\in C([0,T];\P_2^\mu(\R\times\Rd))$, we can construct explicit distributional solutions for~\eqref{eq:disintegrated_linear_vlasov_equation} as in the following theorem.
\begin{proposition}\label{prop:well_posedness_linear_vlasov}
 Let  $\tilde{\sigma}_{0} \in \P^\mu_2(\R\times \Rd)$ such that  $\tilde{\sigma}_0 = \int_{\Rd}\tilde{\sigma}_{0,x'}\dd\mu(x')$ and $\sigma \in C([0,T], \P^\mu_2(\R \times \Rd))$. Assume $V:[0,T]\times L^2_\mu(\Rd) \to \V^{as}(\Rddiag)$ satisfies \eqref{eq:velocity_second_moment}, \eqref{eq:velocity_Lipschitz_second_moment} and that it is continuous in time. Let $(r,\eta)$ be a solution to~\eqref{eq:euler}. Then, $\tilde{\sigma}_{t,x} = f_x[\sigma,r,\eta](t,x,\cdot)_\# \tilde{\sigma}_{0,x}$ is the unique distributional solution to \eqref{eq:disintegrated_linear_vlasov_equation}, for $\mu$-a.e. $x \in \Rd$, where $f_x$ is defined in~\eqref{eq:flow_equation}.
\end{proposition}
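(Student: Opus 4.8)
The plan is to prove the two assertions --- existence of the push-forward solution and uniqueness --- separately, both relying on the flow map $f_x:=f_x[\sigma,r,\eta]$ from Lemma~\ref{lemma:existence_of_flow} and on its linear growth, Lipschitz dependence on the initial value, and time-continuity established in Lemma~\ref{lemma:properties_of_flow}. Since the given $\sigma$ is frozen, the field $\X[\sigma,r,\eta](t,\cdot,x)$ appearing in~\eqref{eq:disintegrated_linear_vlasov_equation} is a prescribed vector field, so the equation is genuinely linear and decouples across $x$; the whole construction is therefore carried out for each fixed $x$ in the full $\mu$-measure set on which $f_x$ exists, measurability of $x\mapsto f_x$ and hence of $x\mapsto\tilde\sigma_{\cdot,x}$ being inherited from that of the data.

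\emph{Existence.} Setting $\tilde\sigma_{t,x}:=f_x(t,\cdot)_\#\tilde\sigma_{0,x}$, the linear-growth estimate of Lemma~\ref{lemma:properties_of_flow}(1) bounds $m_2[\tilde\sigma_{t,x}]$ by a multiple of $1+m_2[\tilde\sigma_{0,x}]$, which is $\mu$-integrable in $x$ because $\tilde\sigma_0\in\P^\mu_2(\R\times\Rd)$; together with the time-continuity of $t\mapsto f_x(t,\cdot)$ (Lemma~\ref{lemma:properties_of_flow}(3)) this shows $\tilde\sigma_{\cdot,x}\in C([0,T];\P_2(\R))$ and, after integration in $x$, $\tilde\sigma\in C([0,T];\P^\mu_2(\R\times\Rd))$. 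For the distributional identity I would fix $\varphi\in C^1_c([0,T]\times\R\times\Rd)$, write $\int_\R\varphi(t,\xi,x)\,\dd\tilde\sigma_{t,x}(\xi)=\int_\R\varphi(t,f_x(t,u),x)\,\dd\tilde\sigma_{0,x}(u)$, and differentiate in $t$ under the integral sign; this is licensed by the compact support of $\varphi$ together with the linear-growth and Lipschitz-in-$u$ bounds of Lemma~\ref{lemma:properties_of_flow}, which dominate both the integrand and its difference quotients uniformly in $u$ on bounded sets. Using the flow equation~\eqref{eq:flow_equation} and the chain rule this gives $\frac{\dd}{\dd t}\int_\R\varphi\,\dd\tilde\sigma_{t,x}=\int_\R\bigl(\partial_t\varphi+\partial_\xi\varphi\,\X[\sigma,r,\eta](t,\cdot,x)\bigr)\dd\tilde\sigma_{t,x}$; integrating over $[0,T]$ and recalling from~\eqref{eq:mean_field} that $\X[\sigma,r,\eta]=-\int\Phi(\cdot)\,\eta\,\dd\sigma$ produces precisely the weak formulation of~\eqref{eq:disintegrated_linear_vlasov_equation}, i.e.\ the analogue of~\eqref{eq:mu_ae_sol_concept_vlasov} with the frozen field.

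\emph{Uniqueness.} I would argue by duality against~\eqref{eq:dual_linear_vlasov}. Let $\tilde\sigma^1,\tilde\sigma^2$ be two distributional solutions of~\eqref{eq:disintegrated_linear_vlasov_equation} with common initial datum, fix $\tau\in(0,T]$ and $\psi\in C^1_c(\R\times\Rd)$, and let $\varphi$ be the classical solution of~\eqref{eq:dual_linear_vlasov} on $[0,\tau]$ with terminal datum $\psi$ supplied by the method-of-characteristics result stated above, namely the backward transport $\varphi(t,\xi,x)=\psi(f_x((t,\tau),\xi),x)$. Because $f_x((t,\tau),\cdot)$ and its inverse are Lipschitz with constants uniform in $t,x$ and of linear growth (Lemma~\ref{lemma:properties_of_flow}(1)--(2) and their backward-in-time counterparts), $\varphi$ is $C^1$ and supported in a fixed compact subset of $\R\times\Rd$, hence an admissible test function on $[0,\tau]$, on which the restrictions of $\tilde\sigma^1,\tilde\sigma^2$ are again distributional solutions. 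Inserting $\varphi$ into the (linear) weak formulation for $\tilde\sigma^1-\tilde\sigma^2$ annihilates the $\partial_t\varphi+\partial_\xi\varphi\,\X$ term by the dual equation and annihilates the $t=0$ boundary term by equality of the data, so that $\int_\R\psi(\xi,x)\,\dd\tilde\sigma^1_{\tau,x}(\xi)=\int_\R\psi(\xi,x)\,\dd\tilde\sigma^2_{\tau,x}(\xi)$; since $\psi$ and $\tau$ are arbitrary this forces $\tilde\sigma^1_{\tau,x}=\tilde\sigma^2_{\tau,x}$ for every $\tau$ and $\mu$-a.e.\ $x$.

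The main obstacle I anticipate is not a conceptual one but the verification that the characteristic-based $\varphi$ is a genuinely admissible ($C^1$, compactly supported in $(\xi,x)$) test function: this rests on regularity of the flow beyond the plain Lipschitz bounds of Lemma~\ref{lemma:properties_of_flow}, hence on the cited method-of-characteristics result --- and, should $\Phi$ fail to be sufficiently smooth in its first argument for $\varphi$ to be $C^1$, on a routine mollification of the test function. Secondary care is needed with the justification of differentiation under the integral sign and with joint measurability in $x$ on the existence side; and throughout, every pointwise-in-$x$ statement is to be read $\mu$-a.e., consistently with the disintegrated formulation~\eqref{eq:disintegrated_linear_vlasov_equation}.
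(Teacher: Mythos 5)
Your proof takes essentially the same route as the paper's: existence by direct verification that the push-forward of $\tilde\sigma_{0,x}$ along the flow of~\eqref{eq:flow_equation} satisfies the weak formulation, and uniqueness by duality against the transport equation~\eqref{eq:dual_linear_vlasov} using characteristic-based test functions $\psi\circ f_x$. Your version is somewhat more careful than the paper's sketch — you vary the terminal time $\tau$ so as to conclude $\tilde\sigma^1_{\tau,x}=\tilde\sigma^2_{\tau,x}$ for every $\tau\in(0,T]$ rather than only at $T$, and you flag that the characteristic test function is only guaranteed to be $C^1$ if the flow map is, which the plain Lipschitz bounds of Lemma~\ref{lemma:properties_of_flow} do not by themselves deliver (suggesting mollification as a remedy) — but the underlying strategy and the key ingredients coincide with the paper's.
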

\begin{proof}
First, we notice that by direct computation it can be checked that $\tilde{\sigma}_{t,x} = f_x[\sigma,\rho,\eta](t,\cdot)_\# \tilde{\sigma}_{0,x}$ satisfies the equation in the sense of distributions, i.e.,~\eqref{eq:mu_ae_sol_concept_vlasov}, for $\mu$-a.e. $x \in \Rd$. Secondly, by the linearity of \eqref{eq:disintegrated_linear_vlasov_equation}, we can show uniqueness for solutions with $\tilde{\sigma}_{0} = 0$. By choosing $\varphi(t,\xi,x) = \psi(f_x[\sigma,\rho,\eta]((0,t),\xi))$ as a test function in \eqref{eq:mu_ae_sol_concept_vlasov}, we obtain 
\begin{align*}
    \int_{\R} \varphi(T,\xi,x) \dd \tilde{\sigma}_{T,x}(\xi) = \int_{\R}\psi(\xi,x)\dd \tilde{\sigma}_{T,x}(\xi) = 0 \ , \ \mu\text{-a.e. } x \in\Rd, 
\end{align*}
for all $\psi\in C^1_c(\R\times\Rd)$, whence $\tilde{\sigma}_{T,x}=0$ and we obtain uniqueness.
\end{proof}
We are ready to prove the well-posedness of \eqref{eq:vlasov_equation}. For any $\nu \in \P^\mu_2(\R\times\Rd)$ with disintegration $\nu = \int_{\Rd}\nu_x \dd \mu(x)$, $x \in \supp\mu$ and the solution $(r,\eta)$ of \eqref{eq:euler},  we define the solution map $\S^{\nu,r,\eta}:C([0,T], \P^\mu_2(\R\times \Rd)) \to C([0,T], \P^\mu_2(\R\times \Rd))$ as
\begin{equation}\label{eq:sol_map}
    \S^{\nu,r,\eta}[\sigma](t) :=\int_{\Rd} f_x[\sigma,r,\eta](t,\cdot)_\# \nu_x\dd\mu(x) \ , \quad \sigma \in \P_2^\mu(\R\times \Rd), 
\end{equation}
where $f_x[\sigma,r,\eta]$ is defined in \eqref{eq:flow_equation}. Note that the solution map is well defined due to the previous results. We denote the disintegration with respect to $\mu$ by $\S^{\nu,r,\eta}_x[\sigma](t) := f_x[\sigma,r,\eta](t,\cdot)_\# \nu_x$, for $\mu$-a.e. $x \in \Rd$. We prove well-posedness of the nonlinear problem~\eqref{eq:vlasov_equation} by another fixed-point argument.
\begin{theorem}\label{thm:vlasov_well_posedness}
    Let $V:[0,T]\times L^2_\mu(\Rd) \to \V^{as}(\Rddiag)$ be continuous in time and satisfy \eqref{eq:velocity_second_moment} and \eqref{eq:velocity_Lipschitz_second_moment}. Let $(r,\eta)$ be the solution of \eqref{eq:euler}. Then, there exists a unique solution $\sigma \in C([0,T],\P^\mu_2(\R\times\Rd))$ to \eqref{eq:vlasov_equation} with initial condition $\nu\in \P^\mu_2(\R\times\Rd)$ given by $\sigma = \int_{\Rd}\sigma_{t,x}\dd\mu(x)$ where $\sigma_{t,x}=f_x[\sigma,r,\eta](t,\cdot)_\#\nu_x$ for $\mu$-a.e. $x \in \Rd$.
\end{theorem}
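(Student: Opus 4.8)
The plan is to construct $\sigma$ as the unique fixed point of the solution map $\S^{\nu,r,\eta}$ introduced in~\eqref{eq:sol_map}, working in the complete metric space $(C([0,T],\P^\mu_2(\R\times\Rd)),\D_{\mu,d})$. First I would obtain a short-time solution via the Banach fixed-point theorem, then iterate to cover $[0,T]$, and finally I would prove uniqueness by showing that \emph{every} weak solution in $C([0,T],\P^\mu_2(\R\times\Rd))$ is necessarily a fixed point of $\S^{\nu,r,\eta}$.

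The first step is to check that $\S^{\nu,r,\eta}$ is well defined and maps $C([0,T],\P^\mu_2(\R\times\Rd))$ into itself. Given $\sigma$ in this space, Lemmas~\ref{lemma:existence_of_flow} and~\ref{lemma:properties_of_flow} provide, for $\mu$-a.e.\ $x$, a flow $f_x[\sigma,r,\eta]$ as in~\eqref{eq:flow_equation} which is measurable in $x$ and has at most linear growth in $u$; since this flow moves only the $\xi$-variable and leaves $x$ fixed, $\pi^2_\#\S^{\nu,r,\eta}[\sigma](t)=\mu$, and Lemma~\ref{lemma:properties_of_flow}(1) together with $\nu\in\P^\mu_2(\R\times\Rd)$ gives
\[
\int_{\Rd} m_2\bigl[f_x[\sigma,r,\eta](t,\cdot)_\#\nu_x\bigr]\,\dd\mu(x)\le 2e^{2\widetilde{C}T}\Bigl(1+\int_{\Rd} m_2[\nu_x]\,\dd\mu(x)\Bigr)<\infty,
\]
so $\S^{\nu,r,\eta}[\sigma](t)\in\P^\mu_2(\R\times\Rd)$ for every $t$. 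Continuity in time with respect to $\D_{\mu,d}$ I would get by writing $f_x(t,u)-f_x(s,u)=\int_s^t \X[\sigma,r,\eta](\tau,f_x(\tau,u),x)\,\dd\tau$, bounding the integrand by means of Definition~\ref{def:admissible_interpolation}, \eqref{eq:velocity_second_moment}, the global bound \eqref{eq:norm_eta_bound}, and the linear-growth estimate, then squaring, integrating in $u$ against $\nu_x$ and in $x$ against $\mu$.

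Next I would run the contraction argument: Lemma~\ref{lemma:flow_contraction}, applied with common initial datum $\nu$, already yields $\D_{\mu,d}(\S^{\nu,r,\eta}[\sigma^1],\S^{\nu,r,\eta}[\sigma^2])\le \tilde{C}_T T\,\D_{\mu,d}(\sigma^1,\sigma^2)$ with $\tilde{C}_T$ depending only on $L_\Phi$, $C_V$, $\|\eta_0\|_\infty$ and $C_\omega$ (not on $\sigma^1,\sigma^2$). Hence for $T$ small enough that $\tilde{C}_T T<1$ the map is a contraction and has a unique fixed point $\sigma$, which by construction satisfies $\sigma_{t,x}=f_x[\sigma,r,\eta](t,\cdot)_\#\nu_x$ for $\mu$-a.e.\ $x$. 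By Proposition~\ref{prop:well_posedness_linear_vlasov}, applied with the frozen coefficient equal to this very $\sigma$, each $\sigma_{t,x}$ is a distributional solution of the disintegrated equation~\eqref{eq:disintegrated_vlasov_equation}, and integrating~\eqref{eq:mu_ae_sol_concept_vlasov} against $\mu$ shows $\sigma_t=\int_{\Rd}\sigma_{t,x}\,\dd\mu(x)$ solves~\eqref{eq:vlasov_equation} in the sense of Definition~\ref{def:sol_vlasov}. Since $\|\eta_t\|_\infty\le\|\eta_0\|_\infty+C_\omega$ for all $t$ by~\eqref{eq:norm_eta_bound} and the relevant second moments remain finite by Lemma~\ref{lemma:properties_of_flow}(1), the contraction time does not degenerate, so I would restart the argument on $[T^*,2T^*],[2T^*,3T^*],\dots$ with the (finite-second-moment) data $\sigma_{kT^*}$, glue the pieces into a curve in $C([0,T],\P^\mu_2(\R\times\Rd))$, and obtain a solution on all of $[0,T]$.

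For uniqueness, given two solutions $\sigma^1,\sigma^2\in C([0,T],\P^\mu_2(\R\times\Rd))$ with datum $\nu$, I would disintegrate the weak formulation~\eqref{eq:sol_concept_vlasov} with respect to $\mu$ — testing against product-type functions $\psi(t,\xi)\chi(x)$ and using a countable dense set of $\psi$'s — to deduce that $\sigma^i_{t,x}$ solves the linear problem~\eqref{eq:disintegrated_linear_vlasov_equation} with coefficient frozen at $\sigma^i$; the uniqueness part of Proposition~\ref{prop:well_posedness_linear_vlasov} then forces $\sigma^i_{t,x}=f_x[\sigma^i,r,\eta](t,\cdot)_\#\nu_x$, so each $\sigma^i$ is a fixed point of $\S^{\nu,r,\eta}$ on every subinterval and hence $\sigma^1=\sigma^2$. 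The routine part of all this is the contraction estimate, which is essentially Lemma~\ref{lemma:flow_contraction}, and the iteration to global time. The two delicate points I expect are (i) verifying that $\S^{\nu,r,\eta}$ genuinely lands in $C([0,T],\P^\mu_2(\R\times\Rd))$, i.e.\ the uniform-in-time second-moment control and the $\D_{\mu,d}$-continuity, both of which rely on the global bound~\eqref{eq:norm_eta_bound} for $\eta$; and (ii) the uniqueness step, where one must pass from the nonlinear weak formulation of~\eqref{eq:vlasov_equation} to the disintegrated linear formulation in order to invoke Proposition~\ref{prop:well_posedness_linear_vlasov} — this disintegration of the weak formulation with respect to $\mu$ is the main obstacle.
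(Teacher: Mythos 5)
Your proposal follows the paper's proof strategy exactly: set up $\mathcal{S}^{\nu,r,\eta}$ on $(C([0,T],\mathcal{P}^\mu_2(\mathbb{R}\times\mathbb{R}^d)),\mathcal{D}_{\mu,d})$, apply Lemma~\ref{lemma:flow_contraction} to get the contraction estimate, invoke the Banach fixed-point theorem for short time, identify fixed points with weak solutions via Proposition~\ref{prop:well_posedness_linear_vlasov}, and iterate to cover $[0,T]$. You spell out some steps the paper leaves implicit — the second-moment control needed for $\mathcal{S}^{\nu,r,\eta}$ to land in the right space, and the disintegration argument showing every weak solution is a fixed point — but these are elaborations of, not departures from, the paper's argument.
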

\begin{proof}
    Let us consider the flow $f_x[\sigma,\rho, \eta]$ as defined in \eqref{eq:flow_equation}, for some $\sigma \in C([0,T],\P_2^\mu(\R\times\Rd))$. Note that by Proposition \ref{prop:well_posedness_linear_vlasov}, we have that $\tilde{\sigma}_x = \S_x^{\nu,r,\eta}[\sigma]$ is the unique solution in $C([0,T],\P^\mu_2(\R\times\Rd))$ to the linear problem \eqref{eq:disintegrated_linear_vlasov_equation} 
     with initial datum $\nu \in \P^\mu_2(\R\times\Rd)$.  Thus, to obtain existence and uniqueness of solution to the nonlinear problem \eqref{eq:vlasov_equation} it suffices to show the solution map $\S^{\nu,r,\eta}[\sigma]$ has a unique fixed-point. Using the disintegrated version of the solution map, $\S^{\nu,r,\eta}_x[\sigma](t)$, and Lemma~\ref{lemma:flow_contraction} we have, for any $\sigma^1,\sigma^2 \in C([0,T],\P_2^\mu(\R\times\R^d))$, 
     \[
     \D_{\mu,d}(\S^{\nu,r,\eta}[\sigma^1], \S^{\nu,r,\eta}[\sigma^2]) \leq \tilde{C}_T T\D_{\mu,d}(\sigma^1,\sigma^2) ,
     \]
    where $\tilde{C}_T>0$ is the constant from \eqref{eq:contraction} and it is of the form $\tilde {C}_T = C \exp\{\alpha T^2\}$. Then, for $T < \tilde{C}_T^{-1}$ the solution map is a contraction. By the Banach-fixed-point Theorem we can conclude there exists a unique solution to \eqref{eq:vlasov_equation} in $C([0,T],\P_2^\mu(\R\times \Rd))$ for $T < T^*$, where $T^*$ is such that $\tilde{C}_{T*}{T^*}=1$. The solution can be extended using a standard iteration procedure so that the time interval does not depend on any $T^*$, see, e.g., the proof of Theorem \ref{thm:well-posedness} in~\cite{esposito_mikolas_2024}.
\end{proof}
\begin{theorem}\label{thm:vlasov_dobrushin}
 Assume $V:[0,T]\times L^2_\mu(\Rd) \to \V^{as}(\Rddiag)$ to be continuous in time and to satisfy~\eqref{eq:velocity_second_moment},~\eqref{eq:velocity_Lipschitz_second_moment}. Let $(r,\eta)$ be the solution to \eqref{eq:euler} with initial datum $(r_0, \eta_0)\in L^2_\mu(\Rd) \times C_b(\Rddiag)$ and let  $\sigma^i \in C([0,T], \P_2^\mu(\R\times\Rd))$ be the solutions to the corresponding \eqref{eq:vlasov_equation} with initial conditions $\tilde{\sigma}^i_0\in\P^\mu_2(\R\times\Rd)$ for $i=1,2$. The following stability estimate holds
 \[
 \mathcal{D}_{\mu,d}(\sigma^1,\sigma^2) \leq \sqrt{2}e^{C(T)} L^2_\mu d_2(\tilde{\sigma}^1_0,\tilde{\sigma}^2_0).
 \]
\end{theorem}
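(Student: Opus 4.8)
The plan is to use the flow representation $\sigma^i_{t,x}=f_x[\sigma^i,r,\eta](t,\cdot)_\#\tilde{\sigma}^i_{0,x}$ provided by Theorem~\ref{thm:vlasov_well_posedness} (both solutions are driven by the \emph{same} $(r,\eta)$, so all $(r,\eta)$-dependent quantities are common to the two problems), to couple $\sigma^1_{t,x}$ and $\sigma^2_{t,x}$ by transporting an optimal plan for the initial data forward along the two flows, and then to close the estimate with a nested Grönwall argument in which the time dependence of $t\mapsto L^2_\mu d_2(\sigma^1_t,\sigma^2_t)$ is kept explicit; this last point is what avoids circularity with the nonlinearity of~\eqref{eq:vlasov_equation}. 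Throughout, write $f^i_x:=f_x[\sigma^i,r,\eta]$, $D(t):=L^2_\mu d_2(\sigma^1_t,\sigma^2_t)$, and $C_1:=L_\Phi C_V^{1/2}(\norm{\eta_0}_\infty+C_\w)$, the last bound coming from~\eqref{eq:norm_eta_bound}.

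\emph{Step 1 (coupling).} For $\mu$-a.e.\ $x\in\Rd$ take an optimal plan $\Pi_{0,x}$ for $d_2$ between $\tilde{\sigma}^1_{0,x}$ and $\tilde{\sigma}^2_{0,x}$, and set $\Pi_{t,x}:=(f^1_x(t,\cdot)\times f^2_x(t,\cdot))_\#\Pi_{0,x}$, an admissible plan between $\sigma^1_{t,x}$ and $\sigma^2_{t,x}$. By the triangle inequality and $(a+b)^2\le 2a^2+2b^2$,
\begin{align*}
d_2^2(\sigma^1_{t,x},\sigma^2_{t,x})&\le\int_{\R\times\R}|f^1_x(t,u)-f^2_x(t,v)|^2\,\dd\Pi_{0,x}(u,v)\\
&\le 2\int_{\R}|f^1_x(t,u)-f^2_x(t,u)|^2\,\dd\tilde{\sigma}^1_{0,x}(u)+2\int_{\R\times\R}|f^2_x(t,u)-f^2_x(t,v)|^2\,\dd\Pi_{0,x}(u,v).
\end{align*}
The second integral is bounded, by the Lipschitz-in-$u$ property of the flow (Lemma~\ref{lemma:properties_of_flow}(2)) and the optimality of $\Pi_{0,x}$, by $e^{2\bar{C}T}d_2^2(\tilde{\sigma}^1_{0,x},\tilde{\sigma}^2_{0,x})$.

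\emph{Step 2 (same-point flow difference).} From~\eqref{eq:flow_equation}, splitting $\X[\sigma^1,r,\eta](s,f^1_x(s,u),x)-\X[\sigma^2,r,\eta](s,f^2_x(s,u),x)$ exactly as in the proof of Lemma~\ref{lemma:flow_contraction}, into a $\xi$-Lipschitz term estimated by~\eqref{eq:Lipschitz_velocity_field} and a measure-difference term estimated by the $W_1$-duality bound used in~\eqref{eq:lemma3.3_bound_II} together with~\eqref{eq:velocity_second_moment}, one obtains
\begin{align*}
|f^1_x(t,u)-f^2_x(t,u)|\le C_1\int_0^t|f^1_x(s,u)-f^2_x(s,u)|\,\dd s+C_1\int_0^t L^2_\mu d_2(\sigma^1_s,\sigma^2_s)\,\dd s,
\end{align*}
where, crucially, the inhomogeneous term is $C_1\int_0^t D(s)\,\dd s$, \emph{not} $C_1 T\,\mathcal{D}_{\mu,d}(\sigma^1,\sigma^2)$ as in Lemma~\ref{lemma:flow_contraction}; note also that it is uniform in $u$ and $x$ (only the $\xi$-Lipschitz part reintroduces $|f^1_x-f^2_x|$). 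Grönwall then gives $|f^1_x(t,u)-f^2_x(t,u)|\le C_1 e^{C_1 T}\int_0^t D(s)\,\dd s$, uniformly in $u$ and $x$.

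\emph{Step 3 (conclusion).} Inserting the two bounds into Step~1 (the first integral there is then $\le C_1^2 e^{2C_1 T}\big(\int_0^t D(s)\,\dd s\big)^2$ since $\tilde{\sigma}^1_{0,x}$ is a probability measure), integrating in $x$ against $\mu$ with $\mu(\Rd)=1$, and using Cauchy--Schwarz $\big(\int_0^t D\big)^2\le T\int_0^t D^2$, one gets
\begin{align*}
D(t)^2\le 2e^{2\bar{C}T}\big(L^2_\mu d_2(\tilde{\sigma}^1_0,\tilde{\sigma}^2_0)\big)^2+2C_1^2 e^{2C_1 T}T\int_0^t D(s)^2\,\dd s.
\end{align*}
A final Grönwall inequality, a square root and the supremum over $t\in[0,T]$ yield $\mathcal{D}_{\mu,d}(\sigma^1,\sigma^2)\le\sqrt2\,e^{C(T)}L^2_\mu d_2(\tilde{\sigma}^1_0,\tilde{\sigma}^2_0)$ with $C(T)=\bar{C}T+C_1^2 e^{2C_1 T}T^2$. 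The main obstacle is exactly the self-consistency due to the nonlinearity: since $\X$ depends on $\sigma$ itself, a direct use of the flow contraction estimate~\eqref{eq:sq_flow_bound} reintroduces $\mathcal{D}_{\mu,d}(\sigma^1,\sigma^2)$ on the right-hand side and only closes for $T$ small, whereas carrying $t\mapsto L^2_\mu d_2(\sigma^1_t,\sigma^2_t)$ under the time integral at every step and running two nested Grönwall estimates gives the estimate for all $T$; everything else is a routine repetition of the computations in Lemmas~\ref{lemma:existence_of_flow}--\ref{lemma:flow_contraction}.
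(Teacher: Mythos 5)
Your proof is correct and follows essentially the same approach as the paper: decompose $d_2^2(\sigma^1_{t,x},\sigma^2_{t,x})$ into a same-datum flow-difference term and a same-flow datum-difference term, bound the former by a flow-difference estimate kept in the form $\int_0^t L^2_\mu d_2(\sigma^1_s,\sigma^2_s)\,\dd s$ rather than $T\,\mathcal{D}_{\mu,d}$, bound the latter by the Lipschitz-in-$u$ property of the flow, integrate in $\mu$, and close with Gr\"onwall. You present the coupling step slightly differently (pushing a single optimal initial plan through both flows and splitting at the integrand level, rather than introducing the intermediate measure $f^2_x(t)_\#\tilde{\sigma}^1_{0,x}$ as the paper does), but the two formulations are interchangeable and yield the same estimate; your observation that carrying $\int_0^t D(s)\,\dd s$ under the Gr\"onwall is what avoids the small-time restriction corresponds precisely to the paper's remark that $\mathcal{C}(T)$ is obtained \emph{without} taking the supremum in time in~\eqref{eq:lemma3.3_bound_II}.
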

\begin{proof}
    Let us write $f^i_x = f^i_x[\sigma^i,r,\eta]$, for $i=1,2$. Then we have that, for $\mu$-a.e. $x\in\Rd$
    \begin{align*}
       d_2^2(\sigma^1_{t,x}, \sigma^2_{t,x}) & = d_2^2(f^1_x[\sigma^1,r,\eta](t,\cdot)_\#\tilde{\sigma}^1_{0,x}, f^2_x[\sigma^2,r,\eta](t,\cdot)_\#\tilde{\sigma}^2_{0,x})
        \\
        & \leq 2 d_2^2(f^1_x(t)_\#\tilde{\sigma}^1_{0,x}, f^2_x(t)_\#\tilde{\sigma}^1_{0,x}) + 2 d_2^2(f^2_x(t)_\#\tilde{\sigma}^1_{0,x}, f^2_x(t)_\#\tilde{\sigma}^2_{0,x})
        \\
        & \leq \mathcal{C}(T) \left(\int_0^t  L^2_\mu d_2({\sigma^1_s}, {\sigma^2_s})\dd s \right)^2 + 2d_2^2(f^2_x(t)_\#\tilde{\sigma}^1_{0,x}, f^2_x(t)_\#\tilde{\sigma}^2_{0,x}) \ , 
    \end{align*}
where {$\mathcal{C}(T)=L_\Phi^2\left( \norm{\eta_0}_{\infty}+C_\omega\right)^2 C_V\exp\{ L_\Phi^2 \left( \norm{\eta_0}_{\infty}+C_\omega\right)^2 C_V \ T^2\}$ is the constant obtained as in \eqref{eq:flow_sq_bound} if we do not take the supremum in time in~\eqref{eq:lemma3.3_bound_II}.} To bound the second term on the right-hand side we proceed as follows.  
Let $\Pi_{0,x} \in \Gamma(\tilde{\sigma}_{0,x}^1,\tilde{\sigma}_{0,x}^2)$ be an optimal transference plan for $\tilde{\sigma}_{0,x}^1$ and $\tilde{\sigma}_{0,x}^2$. Then for $\mu$-a.e. $x\in\Rd$ we have that
\begin{align*}
d_2^2(f^2_x(t)_\#\tilde{\sigma}^1_{0,x}, f^2_x(t)_\#\tilde{\sigma}^2_{0,x}) & \leq \int_{\R\times\R}|f^2_x(t,u) - f^2_x(t,v)|^2\dd \Pi_{0,x}(u,v)
\\
& \leq e^{\bar{C}_T}\int_{\R\times\R}|u-v|^2\dd \Pi_{0,x}(u,v)
\\
& \leq e^{\bar{C}_T}d_2^2(\tilde{\sigma}_{0,x}^1,\tilde{\sigma}_{0,x}^2) \ ,
\end{align*}
where the second inequality follows from (2) of Lemma~\ref{lemma:properties_of_flow} and $\bar{C}=C(\eta_0,V,\Phi)>0$. Summarising, we get 
\[
d_2^2(\sigma^1_{t,x}, \sigma^2_{t,x}) \leq \mathcal{C}(T) \left(\int_0^t  L^2_\mu d_2({\sigma^1_s}, {\sigma^2_s})\dd s \right)^2  + 2e^{\bar{C}_T}d_2^2(\tilde{\sigma}_{0,x}^1,\tilde{\sigma}_{0,x}^2),
\]
whence, by integrating w.r.t. $\mu$ and taking the square root we obtain
\[
L^2_\mu d_2(\sigma^1_t,\sigma^2_t)\le \bar{\mathcal{C}}(T)\int_0^t  L^2_\mu d_2({\sigma^1_s}, {\sigma^2_s})\dd s + \sqrt{2}e^{\bar{C}_T} L^2_\mu d_2(\sigma_0^1,\sigma_0^2),
\]
where $\bar{\mathcal{C}}(T)=\sqrt{\mathcal{C}(T)}$
An application of Gr\"onwall's inequality yields
\[
L^2_\mu d_2(\sigma^1_t,\sigma^2_t)\le\sqrt{2}e^{\bar{C}_T+\bar{\mathcal{C}}(T)T} L^2_\mu d_2(\sigma_0^1,\sigma_0^2),
\]
hence the result.
\end{proof}
\subsection{Contraction in $L^2_\mu d_2$}\label{sec:contraction}

In this section we focus on proving contraction in $L^2_\mu$ for solutions of~\eqref{eq:vlasov_equation}, as this can allow to study the long-time behaviour of the $\mu$-monokinetic solutions, hence the long-time asymptotics for evolutions on graphs. We present the result in the simpler case of $\Phi \equiv \Phi_{upwind}$ and $V$ and $\w$ not depending on $r \in L^2_\mu(\Rd)$, so that the system \eqref{eq:euler} decouples, i.e. the graph is no longer co-evolving but only evolving.

\begin{proposition}\label{prop:upwind_contraction}
    Fix $\Phi\equiv \Phi_{upwind}$. Let $\sigma^i \in C([0,T], \P^\mu_2(\R\times\Rd))$ be $\mu$-monokinetic solutions of \eqref{eq:vlasov_equation}, with initial data $\sigma^i_0 = \delta_{r^i_0(\cdot)} \otimes \mu$, where $(r^i,\eta) $ are solutions to \eqref{eq:euler} starting from $(r^i_0,\bar{\eta}) \in L^2_\mu(\Rd)\times C_b(\Rddiag)$, for $i=1,2$, and $\bar{\eta}>0$. Let $V:[0,T] \to \V^{as}(\Rddiag)$ be an antisymmetric velocity field which is continuous in time and satisfies  \eqref{eq:velocity_second_moment} and \eqref{eq:velocity_Lipschitz_second_moment}, and let $\w:[0,T] \times \Rddiag \to \R$ satisfy \ref{ass:w_continuous}-\ref{ass:omega_symmetric}, and $\omega\ge \omega_*>0$. Assume, 
    \begin{equation}\label{eq:lambda_upwind}
       \lambda := \inf_{t\in[0,T]}\inf_{x \in \Rd}\int_{\Rd}V_t(x,x')\eta_t(x,x')\dd\mu(x') > 0.
    \end{equation}
    Then, for any $t\in[0,T]$,
    \[
    L^2_\mu d_2(\sigma^1_t, \sigma^2_t) \leq e^{- 2\lambda t}L^2_\mu d_2(\sigma^1_0, \sigma^2_0)\ . 
    \]
\end{proposition}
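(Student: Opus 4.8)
The plan is to reduce the claim to a Grönwall estimate for the $L^2_\mu(\Rd)$-norm of the difference of the two mass densities, exploiting the monokinetic structure of the solutions together with the upwind form of $\Phi_{upwind}$, the antisymmetry of $V$ and the symmetry and positivity of $\eta$. First I would record the structural simplifications. Since $V$ and $\w$ do not depend on $r$, the equation for $\eta$ in~\eqref{eq:euler} decouples from that for $r$, so the two solutions $(r^1,\eta)$ and $(r^2,\eta)$ share the \emph{same} curve $\eta$; moreover $\eta_t(x,y)>0$ for all $t\in[0,T]$ and $(x,y)\in\Rddiag$, by the explicit formula~\eqref{eq:explicit_solution_eta} together with $\bar{\eta}>0$ and $\w\ge\w_*>0$ (so that the definition~\eqref{eq:lambda_upwind} of $\lambda$ is meaningful). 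By Theorem~\ref{thm:vlasov_well_posedness} the $\mu$-monokinetic curves $\sigma^i_t=\delta_{r^i_t}\otimes\mu$ are the solutions of~\eqref{eq:vlasov_equation} with data $\sigma^i_0$, so their disintegrations with respect to $\mu$ are Diracs, $\sigma^i_{t,x}=\delta_{r^i_t(x)}$ for $\mu$-a.e.\ $x$, and therefore
\[
  \big(L^2_\mu d_2(\sigma^1_t,\sigma^2_t)\big)^2=\int_{\Rd}d_2^2\big(\delta_{r^1_t(x)},\delta_{r^2_t(x)}\big)\dd\mu(x)=\norm{r^1_t-r^2_t}^2_{L^2_\mu(\Rd)}.
\]
Hence it suffices to show $\norm{w_t}_{L^2_\mu(\Rd)}\le e^{-2\lambda t}\norm{w_0}_{L^2_\mu(\Rd)}$, where $w_t:=r^1_t-r^2_t$.

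Next I would perform the energy estimate. Since $r^i\in AC([0,T];L^2_\mu(\Rd))$ with $L^2_\mu$-valued time derivative given by~\eqref{eq:euler} (Proposition~\ref{prop:L2_apriori_props}), the map $t\mapsto\tfrac12\norm{w_t}^2_{L^2_\mu(\Rd)}$ is absolutely continuous; subtracting the two equations~\eqref{eq:euler} and using $\Phi_{upwind}(a,b;v)=av_+-bv_-$ gives, for $\mu$-a.e.\ $x$,
\[
  \partial_t w_t(x)=-\int_{\Rdminus}\big(w_t(x)\,V_t(x,y)_+-w_t(y)\,V_t(x,y)_-\big)\,\eta_t(x,y)\,\dd\mu(y).
\]
Multiplying by $w_t(x)$ and integrating in $x$ splits the derivative of $\tfrac12\norm{w_t}^2_{L^2_\mu(\Rd)}$ into a negative diagonal contribution $-\iint w_t(x)^2 V_t(x,y)_+\eta_t\,\dd\mu(y)\dd\mu(x)$ and a cross contribution $\iint w_t(x)w_t(y) V_t(x,y)_-\eta_t\,\dd\mu(y)\dd\mu(x)$. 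I would then symmetrize both by Fubini and the substitution $x\leftrightarrow y$, using the antisymmetry $V_t(y,x)=-V_t(x,y)$ (hence $V_t(y,x)_\pm=V_t(x,y)_\mp$) and the symmetry of $\eta_t$, and bound the cross term via Young's inequality $w_t(x)w_t(y)\le\tfrac12(w_t(x)^2+w_t(y)^2)$, so that the positive cross part is absorbed into the $V_-$-part of the diagonal term. The expected outcome is a differential inequality of the form
\[
  \frac{\dd}{\dd t}\norm{w_t}^2_{L^2_\mu(\Rd)}\le -\,C\int_{\Rd}w_t(x)^2\left(\int_{\Rdminus}V_t(x,y)\,\eta_t(x,y)\,\dd\mu(y)\right)\dd\mu(x)\le -\,C\lambda\,\norm{w_t}^2_{L^2_\mu(\Rd)}
\]
for a positive constant $C$, after which Grönwall's inequality delivers the stated exponential contraction of $L^2_\mu d_2(\sigma^1_t,\sigma^2_t)$, the constant in the exponent being dictated by $C$ and by the precise accounting of the cross term.

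The main obstacle is exactly this cross-term bookkeeping: one has to use the antisymmetry of $V$ and the symmetry (and non-negativity) of $\eta$ carefully so that, after Young's inequality, the positive cross contribution is dominated by the $V_-$-part of the diagonal term and only the genuine sink $\int_{\Rdminus}V_t(x,y)\eta_t(x,y)\dd\mu(y)\ge\lambda$ survives with the sharp constant. Everything else is routine given the results already established: absolute continuity in time and the legitimacy of differentiating the $L^2_\mu$-norm follow from Proposition~\ref{prop:L2_apriori_props}, the identity $L^2_\mu d_2=\norm{\cdot}_{L^2_\mu(\Rd)}$ on monokinetic measures is immediate from the disintegration, and the positivity of $\eta_t$ (which enters only to make $\lambda>0$ a meaningful hypothesis, cf.\ the discussion after~\eqref{eq:intro_vlasov_equation}) is not used in the algebraic energy identity itself.
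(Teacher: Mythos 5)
Your plan is exactly the paper's argument: reduce $L^2_\mu d_2$ on $\mu$-monokinetic measures to the $L^2_\mu$-norm of $w_t=r^1_t-r^2_t$, differentiate $\|w_t\|_{L^2_\mu}^2$ using the upwind flux, bound the cross term by Young's inequality, symmetrize via $x\leftrightarrow x'$ using the antisymmetry of $V$ and symmetry of $\eta$ so the $V_\pm$-pieces recombine into $\int V_t\eta_t\,\dd\mu\ge\lambda$, and close with Gr\"onwall. You stop short of the final bookkeeping of the constant, but the decomposition, the key algebraic step, and the use of $\eta_t>0$ from~\eqref{eq:explicit_solution_eta} all coincide with the paper's proof.
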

\begin{proof}
First, we notice that $\eta_t>0$ since $\eta_0=\bar\eta>0$ and $\omega\ge \omega_*>0$, by means of the explicit solution form of $\eta$ given in~\eqref{eq:explicit_solution_eta} (see Lemma~\ref{lemma:infty_eta_positive}). Let $\sigma^i_{t,x} = \delta_{r^i_t(x)}\in \P_2(\R)$ be the disintegration of $\sigma^i$ with respect to $\mu$ for $i=1,2$. Then, for $\mu$-a.e. $x \in\Rd$, $\Pi_{t,x} = \delta_{r^1_t(x)}\otimes \delta_{r^2_t(x)}$ is the optimal transference plan between $\sigma^1_{t,x}$ and $\sigma^2_{t,x}$, so that 
    \[
    \left(L^2_\mu d_2(\sigma^1_t,\sigma^2_t)\right)^2 = \int_{\Rd}d^2_2(\sigma^1_{t,x},\sigma^2_{t,x})\dd\mu(x) = \int_{\Rd}|r^1_t(x) - r^2_t(x)|^2\dd\mu(x)\ .
    \]
    Since $r^1,r^2 \in \AC([0,T],L^2_\mu(\Rd))$ are solutions to~\eqref{eq:euler}, by exploiting the Dominated Convergence Theorem, we obtain the following estimate:
        \begin{align*}
        \frac{d}{dt} &\left(L^2_\mu d_2(\sigma^1_t,\sigma^2_t)\right)^2\\
        & =\int_{\Rd}\frac{d}{dt}|r_t^1(x) - r_t^2(x)|^2\dd\mu(x)
        \\
         &= \ -2\int_{\Rd}(r^1_t(x) - r^2_t(x))
        \\
         &\qquad \times  \bigg(r^1_t(x) \int_{\Rdminus}V^+_t(x,x')\eta_t(x,x') \dd \mu(x') - \int_{\Rd}r^1_t(x') V^-_t(x,x') \eta_t(x,x') \dd\mu(x')
        \\
        &  \qquad \qquad - r^2_t(x) \int_{\Rdminus}V^+_t(x,x')\eta_t(x,x') \dd \mu(x') + \int_{\Rd}r^2_t(x') V^-_t(x,x') \eta_t(x,x') \dd\mu(x')\bigg)
        \\
        &=  -2\int_{\Rd}(r^1_t(x) - r^2_t(x))^2\int_{\Rdminus}V^+_t(x,x')\eta_t(x,x')\dd\mu(x')\dd\mu(x)
        \\
        & \quad+ 2 \int_{\Rd}\int_{\Rdminus}(r^1_t(x) - r^2_t(x))(r^1_t(x') - r^2_t(x'))V^-_t(x,x')\eta_t(x,x') \dd\mu(x') \dd\mu(x)\\
         &\leq  
        -2\int_{\Rd}(r^1_t(x) - r^2_t(x))^2\int_{\Rdminus}V^+_t(x,x')\eta_t(x,x')\dd\mu(x')\dd\mu(x)
        \\
        & \quad+  \int_{\Rd}\int_{\Rdminus}[(r^1_t(x) - r^2_t(x))^2 + (r^1_t(x') - r^2_t(x'))^2]V^-_t(x,x')\eta_t(x,x') \dd\mu(x') \dd\mu(x)
\end{align*}
\begin{align*}
        &= -2\int_{\Rd}(r^1_t(x) - r^2_t(x))^2\int_{\Rdminus}V^+_t(x,x')\eta_t(x,x')\dd\mu(x')\dd\mu(x)
        \\
         &\quad +\int_{\Rd}(r^1_t(x) - r^2_t(x))^2\int_{\Rdminus}V^-_t(x,x')\eta_t(x,x') \dd\mu(x') \dd\mu(x) \\
        &  
        \quad+ \int_{\Rd}(r^1_t(x) - r^2_t(x))^2\int_{\Rdminus}V^+_t(x,x')\eta_t(x,x') \dd\mu(x') \dd\mu(x)
        \\
         &= -\int_{\Rd}(r^1_t(x) - r^2_t(x))^2\int_{\Rdminus}V_t(x,x') \eta_t(x,x')\dd\mu(x')\dd\mu(x)
        \\
         &\leq -\lambda \left(L^2_\mu d_2(\sigma^1_t, \sigma^2_t)\right)^2 \ , 
    \end{align*}
    where in the third equality we have used the antisymmetry of $V$ and the symmetry of $\eta$. An application of Gr\"onwall's inequality concludes the proof. 
\end{proof}
Condition~\eqref{eq:lambda_upwind} does not seem to be reasonable since, even for fully connected graphs, i.e. with $\eta>0$ for all $t \geq0$, it can only be satisfied if the velocity is positive for all vertices for any time --- this means that all vertices must \say{give mass} to, at least, one neighbour. It is, therefore, difficult to find examples that fulfil this condition and that are of the form $V(x,x') = -\onabla \phi(x,x') = -(\phi(x') - \phi(x))$ for some suitable, potentially solution dependent, function $\phi$. This motivates the alternative approach presented in Section~\ref{sec:long_time_behaviour} to obtain a possible long-time behaviour of solutions to~\eqref{eq:euler}. Providing a more reasonable condition in place of~\eqref{eq:lambda_upwind} is then an open problem.

\section{Long-time behaviour for pointwise and monotonic velocities}\label{sec:long_time_behaviour}

In this section we consider a specific class of velocities on the graph in order to define a certain order for the flow, helpful to establish a possible long-time behaviour for solutions of~\eqref{eq:euler}, for $\Phi\equiv\Phi_{upwind}$. We highlight two main features. First, velocities are \textit{pointwise}, that is $V_t[r](x,x') = V_t(r_t(x),r_t(x'))$ for a solution $(r,\eta)$ to \eqref{eq:euler}. Second, they are \textit{monotonic}, intuitively meaning that the sign of the velocity, hence the flow, is determined by the amount of mass at the vertices.

\begin{definition}\label{def:monotonic_velocity}
    We say that a pointwise velocity \(V: [0, T] \times L_\mu^\infty(\mathbb{R}^d) \to \V^{as}({\Rddiag})\) is \textit{monotonic} if, for \(r \in L_\mu^\infty(\mathbb{R}^d)\),  any \(t \geq 0\), and \(\mu\)-a.e. \(x, x' \in \mathbb{R}^d\) we have that:
\begin{itemize}
    \item  \(r(x) > r(y)\), then \(V_t^+(r(x), r(x')) > V_t^+(r(y), r(x'))\) ,
    \item  \(r(x) < r(y)\), then \(V_t^-(r(x), r(x')) > V_t^-(r(y), r(x'))\) ,
    \item  \(r(x) = r(y)\), then \(V_t(r(x), r(x')) =V_t(r(y),r(x'))\) ,
    \item  \(r(x) = r(x')\), then \(V_t(r(x), r(x')) =0\).
\end{itemize}
\end{definition} 

 Let us explain the meaning of the conditions in the previous definition. The first condition can be interpreted as saying that, if the mass is greater at a vertex $x\in\Rd$ than $y\in \Rd$, then the outflow from vertex $x$ to its neighbours is greater than the outflow from $y$. Conversely, if the mass at vertex $x$ is lower than the mass at vertex $y$, the inflow to $x$ from its neighbours should be greater than the inflow to $y$. The rest of the conditions follow the same type of reasoning. This assumption on the velocity is similar on the upwind structure of~\eqref{eq:euler}, i.e., $\Phi\equiv \Phi_{upwind}$, although we now require that the direction depends on the quantity of mass. The pointwise nature of the velocities requires we should work in the $L^\infty_\mu$ setting. Furthermore, we stress that monotonic velocities ensure that the mass on the graph diffuses as we shall show in Section~\ref{subsec:long_time_behaviour}. 

Our main example for such velocities is given by 
\begin{equation}\label{eq:pointwise_velocity_example}
    V_t(r_t(x),r_t(x')) = -\onabla \alpha(r_t)(x,x') =  \alpha[r_t(x)] - \alpha[r_t(x')]\ ,
\end{equation}
where $\alpha:{\R}\to\R$ is monotonically increasing and bounded such as $\alpha(x) = \frac{1}{1+ e^{-x}}$ for $x\in\Rd$. We remark that this type of velocity is different from the ones considered in the previous sections, which came from interaction and potential energies.

\subsection{$L^\infty$ theory for \eqref{eq:euler}}\label{subsec:well_posedness_infty}
Let us start by briefly adapting the theory developed in Section~\ref{sec:euler-co-ncl} to the $L^\infty_\mu$-setting. Focusing on the dynamics for $\eta$, the function $\w:[0,T]\times \Linfty \times \Rddiag \to \R$ will now satisfy the following version of assumption \ref{ass:w_continuous}-\ref{ass:omega_symmetric}. 
\begin{enumerate}[label=$(\tilde{\bm{\w}}\arabic*)$]
    \item the map $(x,y)\in\Rddiag  \mapsto \w_t[\cdot](\cdot,x,y)$ is continuous and $(t\mapsto\w_t[\cdot](\cdot,\cdot))\in L^1([0,T])$;\label{ass:infty_w_continuous}
    \item {there exists a constant $L_\w\geq 0$ such that for any $f,g \in \Linfty$ }\label{ass:infty_omega_lip}
     \begin{align*}
         \sup_{t \in [0,T]}\sup_{(x,y) \in \Rddiag} | \w_t[f](x,y) - \w_t[g](x,y)| & \leq L_\w\norm{f - g}_{\Linfty}; 
     \end{align*}
     \item $\w$ is bounded, that is, there exists a constant $\tilde{C}_{\w}>0$ such that \label{ass:infty_omega_bounded}
     \begin{equation*}
    \sup_{t \in [0,T]}\sup_{r \in \Linfty}\sup_{(x,y) \in \Rddiag}\big|\w_t[r](x,y)\big| \le \tilde{C}_{\w}.
\end{equation*}
\item The map $(x,y) \mapsto \w_\cdot[\cdot](x,y)$ is symmetric. \label{ass:infty_omega_symmetric}
\end{enumerate}
Although assumption \ref{ass:infty_omega_symmetric} is not needed to obtain the well-posedness, it is important both to obtain the form \eqref{eq:small_flux} for the divergence of the flux as well as to be able to apply the result in \cite[Proposition 5.2]{Esposito_on_a_class}. Our guiding example for the function $\omega$ is as in~\eqref{eq:omega_example}. Solutions of \eqref{eq:euler} in the $L^\infty_\mu$-setting are defined in Definition~\ref{def:sol_to_euler} for $p=\infty$.
Well-posedness of~\eqref{eq:euler} in the $L^\infty_\mu$-setting can be proven by following the same strategy as in Section~\ref{sec:euler-co-ncl}, by exploiting the Banach fixed-point theorem, with $\mu\in \mathcal{P}(\Rd)$. Below we provide the statements without proofs, cf.~\cite{mikol2024a} for further details.
\begin{proposition}\label{prop:infty_apriori_props}
   Let $\Phi$ be a jointly antisymmetric admissible flux interpolation, $r_0 \in L^\infty_{\mu}(\Rd)$, $\eta_0\in C_b(\Rddiag)$. Assume $\w:[0,T]\times L^\infty_\mu(\R^d)\times \Rddiag  \to  \R$ is such that the map $(x,y)\in\Rddiag  \mapsto \w_t[\cdot](x,y)$ is continuous and that, for a constant $C_{\w}>0$,
\begin{equation}\label{eq:w_integrable_time}
       \int_0^T \sup_{r \in L^\infty_\mu(\R^d)}\sup_{(x,y) \in \Rddiag}|\w_s[r](x,y)| \dd s\le C_{\w}\ .
\end{equation}
Suppose that $V:[0, T]\times L^\infty_\mu(\R^d) \rightarrow \mathcal{V}^{\mathrm{as}}(\Rddiag)$ is pointwise and satisfies  
    \begin{equation}\label{eq:infty_V_bound}
        \sup_{t\in[0,T]} \sup_{r \in \Linfty}\sup_{(x,x') \in \Rddiag} |V_t(r_t(x),r_t(x'))| \leq C_{V}\ , 
    \end{equation}
 for some $C_{V}>0$. For a pair $(r,\eta):[0,T]\to L^\infty_\mu(\Rd)\times C_b(\Rddiag)$ satisfying~\eqref{eq:sol_r,eta}, the following properties hold.
 \begin{enumerate}
     \item For $\mu$-a.e. $x \in \Rd$ and every $(x,y)\in\Rddiag$, the maps 
     \begin{align*}
         &t \mapsto\int_{\Rdminus}\Phi(r_t(x),r_t(y); V_s(r_s(x),r_s(x')))\eta_s(x,x)\dd\mu(y) \in L^1([0,T])\ ,
         \\
         & t\mapsto \w_t[r](x,y) - \eta_t(x,y) \in L^1([0,T])\  ,
     \end{align*}
and $r \in L^\infty([0,T];L^\infty_\mu(\Rd)), \eta\in L^\infty([0,T],C_b(\Rddiag))$.
\item $r \in AC([0,T];L^\infty_\mu(\Rd))$ and $\eta \in AC([0,T];C_b(\Rddiag))$.
 \end{enumerate}
\end{proposition}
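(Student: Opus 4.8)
The plan is to mirror the proof of Proposition~\ref{prop:L2_apriori_props}, taking advantage of the fact that in the present setting the velocity bound~\eqref{eq:infty_V_bound} is \emph{uniform}, rather than an $L^2$-in-$y$ estimate, which streamlines every step. First I would control $\eta$. From the integral identity~\eqref{eq:sol_eta} and the time-integrability~\eqref{eq:w_integrable_time} of $\w$, a Gr\"onwall argument identical to the one yielding~\eqref{eq:bound_eta_gronwall} gives
\[
\norm{\eta_t}_{\infty} \le \bigl(\norm{\eta_0}_{\infty} + C_{\w}\bigr)e^{T}, \qquad t\in[0,T],
\]
so that $\eta\in L^\infty([0,T];C_b(\Rddiag))$; since existence of solutions is already available one may alternatively invoke the explicit formula~\eqref{eq:explicit_solution_eta} for the sharper bound~\eqref{eq:norm_eta_bound}. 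The $L^1([0,T])$-integrability of $t\mapsto\w_t[r](x,y)-\eta_t(x,y)$ is then immediate from~\eqref{eq:w_integrable_time} and the boundedness of $\eta$.

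Next I would bound $r$. For $\mu$-a.e.\ $x\in\Rd$, starting from~\eqref{eq:sol_r,eta} and using the admissibility conditions~\ref{ass:interp_deg}--\ref{ass:interp_lip} together with $\Phi(a,b;0)=0$, the boundedness of $\eta$, the pointwise bound~\eqref{eq:infty_V_bound}, and $\mu(\Rd)=1$, one obtains
\[
|r_t(x)| \;\le\; |r_0(x)| + L_\Phi\,\norm{\eta}_{\infty,C_b(\Rddiag)}\, C_{V}\int_0^t\bigl(|r_s(x)| + \norm{r_s}_{\Linfty}\bigr)\dd s .
\]
Taking the $\mu$-essential supremum in $x$ (using a Minkowski-type inequality to pass it through the time integral, exactly as~\cite[Theorem~6.19a.]{folland1999real} is used in the $L^2_\mu$ case) and then applying Gr\"onwall's inequality yields
\[
\norm{r_t}_{\Linfty} \;\le\; \norm{r_0}_{\Linfty}\,e^{2L_\Phi\norm{\eta}_{\infty,C_b(\Rddiag)}C_{V} T},
\]
whence $r\in L^\infty([0,T];\Linfty)$. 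In contrast to the $L^2_\mu$ case, neither a Cauchy--Schwarz estimate nor the iterated Gr\"onwall argument is needed, precisely because~\eqref{eq:infty_V_bound} controls $|V_t(r_t(x),r_t(x'))|$ uniformly.

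With both bounds in hand, the time-integrability of the nonlocal divergence is immediate: for a.e.\ $s\in[0,T]$ and $\mu$-a.e.\ $x$ its integrand is dominated by $2L_\Phi C_{V}\norm{\eta}_{\infty,C_b(\Rddiag)}\sup_{s\in[0,T]}\norm{r_s}_{\Linfty}$, a finite constant, so the corresponding map lies in $L^1([0,T])$; this completes part~(1). Part~(2) then follows from part~(1) and the representation~\eqref{eq:sol_r,eta}, since $r_t$ (for $\mu$-a.e.\ $x$) and $\eta_t$ (for every $(x,y)$) are integrals of maps in $L^1([0,T])$, which upgrades to absolute continuity of $t\mapsto r_t$ in $\Linfty$ and of $t\mapsto \eta_t$ in $C_b(\Rddiag)$ exactly as in Proposition~\ref{prop:L2_apriori_props}(2).

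I do not anticipate any genuine obstacle here --- this is the mildest of the a priori estimates, as the pointwise velocity structure dissolves the integrability subtleties of the $L^2_\mu$ setting. The only points deserving a line of care are the measurability of $(x,x')\mapsto V_t(r_t(x),r_t(x'))$, which is automatic as the composition of the measurable $V_t$ with $r_t\in\Linfty$, and the passage of the $\mu$-essential supremum through the time integral, which is legitimate because the integrand is nonnegative.
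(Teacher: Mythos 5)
The paper itself gives no proof of Proposition~\ref{prop:infty_apriori_props}; it states ``Below we provide the statements without proofs, cf.~\cite{mikol2024a} for further details,'' and earlier flags that the $L^\infty_\mu$ theory ``can be proven by following the same strategy as in Section~\ref{sec:euler-co-ncl}.'' Your proposal is precisely that strategy carried out: the $\eta$-bound via~\eqref{eq:sol_eta} and Gr\"onwall, the pointwise bound on the nonlocal divergence using~\ref{ass:interp_deg}--\ref{ass:interp_lip}, the uniform velocity bound~\eqref{eq:infty_V_bound} and $\mu(\Rd)=1$, passage of $\esssup_x$ through the time integral, a Gr\"onwall closure for $\norm{r_t}_{\Linfty}$, and absolute continuity from~\eqref{eq:sol_r,eta}. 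I checked the key estimate and the constant $e^{2L_\Phi\norm{\eta}_{\infty,C_b(\Rddiag)}C_V T}$ is right. You correctly observe that the uniform bound~\eqref{eq:infty_V_bound} removes the Cauchy--Schwarz step and the second (iterated) Gr\"onwall argument that were needed in the $L^2_\mu$ case, which is the genuine simplification here. One minor point of care you flag only in passing: passing $\esssup_x$ through $\int_0^t$ requires a Fubini-type argument to handle the $s$-dependence of the $\mu$-null exceptional set where $|r_s(x)|\le\norm{r_s}_{\Linfty}$ fails; this is standard but worth a sentence in a written-up version. Overall the proposal is correct and matches the approach the paper signposts.
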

\begin{theorem}\label{thm:well_posedness_euler_infty}
    Let $\Phi$ be a jointly antisymmetric admissible flux interpolation as in Definition~\ref{def:admissible_interpolation} and consider an initial datum $(r^0, \eta^0) \in L^\infty_\mu(\Rd)\times C_b(\Rddiag)$. Assume that $V:[0,T]\times L^\infty_\mu(\Rd)  \to \V^{as}(\Rddiag)$ satisfies \eqref{eq:infty_V_bound} and, for any $g,\tilde{g}\in L^\infty_\mu(\Rd)$,
\begin{equation}\label{eq:infty_velocity_Lipschitz_second_moment}
    \sup_{t\in[0,T]} \sup_{(x,x')\in \Rddiag}  |V_t(g_t(x),g_t(x')) - V_t(\tilde{g}_t(x),\tilde{g}_t(x'))| \leq L_{V}\norm{g-\tilde{g}}_{L^\infty_\mu(\Rd)},     
\end{equation}
for a constant $ L_{V}>0$. Assume that $\w:[0,T]\times L^\infty_\mu(\Rd) \times \Rddiag \to \R$ satisfies \ref{ass:infty_w_continuous}-\ref{ass:infty_omega_symmetric}. Then, there exists a solution $(r, \eta)$ to \eqref{eq:euler} with initial data $(r^0,\eta^0)$, in the sense of Definition \ref{def:sol_to_euler}, for $p=\infty$. 
\end{theorem}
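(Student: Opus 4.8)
The plan is to reproduce, in the $L^\infty_\mu$ setting, the fixed-point scheme used for Theorem~\ref{thm:well_posedness_euler}; in fact the estimates become easier here because $\mu\in\P(\Rd)$ and $V$ is pointwise and uniformly bounded, so no Cauchy--Schwarz/Minkowski steps are needed. First I would work in the complete metric space $\mathcal{AC}_T^\infty:=AC([0,T];\Linfty)\times AC([0,T];C_b(\Rddiag))$ equipped with the distance $\tilde d_\infty$ for $p=\infty$, and define the solution map $\S=(\S_V,\S_\w)$ by the Duhamel formulas
\[
\S_V(r,\eta)(t)(x):=r_0(x)-\int_0^t\int_{\Rdminus}\Phi\big(r_s(x),r_s(y);V_s(r_s(x),r_s(y))\big)\,\eta_s(x,y)\,\dd\mu(y)\,\dd s,\qquad \mu\text{-a.e. } x\in\Rd,
\]
\[
\S_\w(r,\eta)(t)(x,y):=\eta_0(x,y)+\int_0^t\big(\w_s[r](x,y)-\eta_s(x,y)\big)\,\dd s,\qquad (x,y)\in\Rddiag .
\]
By Proposition~\ref{prop:infty_apriori_props}, $\S$ maps $\mathcal{AC}_T^\infty$ into itself; moreover the explicit representation \eqref{eq:explicit_solution_eta} gives the uniform bound $\norm{\eta}_{\infty,C_b(\Rddiag)}\le\norm{\eta_0}_{C_b(\Rddiag)}+C_\w$, and a pointwise‑in‑$x$ (i.e. $\mu$‑a.e.) Grönwall argument using \ref{ass:interp_deg}--\ref{ass:interp_lip} together with \eqref{eq:infty_V_bound} produces an a priori bound $\norm{r}_{\infty,\Linfty}\le\bar C_T:=\norm{r_0}_{\Linfty}e^{2L_\Phi(\norm{\eta_0}_\infty+C_\w)C_VT}$. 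It is convenient to run the contraction argument on the closed (hence complete) invariant subset of $\mathcal{AC}_T^\infty$ of curves satisfying these two bounds.

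The core step is the contraction estimate. For $(r,\eta),(\tr,\teta)$ in this set I would split $\S_V(r,\eta)(t)-\S_V(\tr,\teta)(t)$ into the three contributions $I,II,III$ coming, respectively, from the first two slots of $\Phi$, from $\eta$, and from $V$, exactly as in the proof of Theorem~\ref{thm:well_posedness_euler}, using \ref{ass:interp_deg}--\ref{ass:interp_lip}. Since $|V_s(r_s(x),r_s(y))|\le C_V$ by \eqref{eq:infty_V_bound} and $\mu(\Rd)=1$, every inner $\mu$‑integral is dominated by a supremum, and one obtains, $\mu$‑a.e. in $x$,
\begin{align*}
|I|&\le 2L_\Phi\norm{\eta}_{\infty,C_b(\Rddiag)}C_V\int_0^t\norm{r_s-\tr_s}_{\Linfty}\,\dd s,\\
|II|&\le 2L_\Phi\norm{\eta-\teta}_{\infty,C_b(\Rddiag)}C_V\,\bar C_T\,t,\\
|III|&\le 2L_\Phi\norm{\teta}_{\infty,C_b(\Rddiag)}L_V\,\bar C_T\,t\,\norm{r-\tr}_{\infty,\Linfty},
\end{align*}
where \eqref{eq:infty_velocity_Lipschitz_second_moment} is used for $III$. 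Taking the essential supremum over $x$ and combining with the standard estimate for $\S_\w$, which by \ref{ass:infty_omega_lip} gives $\norm{\S_\w(r,\eta)-\S_\w(\tr,\teta)}_{\infty,C_b(\Rddiag)}\le T\norm{\eta-\teta}_{\infty,C_b(\Rddiag)}+L_\w T\norm{r-\tr}_{\infty,\Linfty}$, one arrives at
\[
\tilde d_\infty\big(\S(r,\eta),\S(\tr,\teta)\big)\le \max\{\alpha(T),\beta(T)\}\,T\,\tilde d_\infty\big((r,\eta),(\tr,\teta)\big),
\]
with $\alpha(T),\beta(T)$ of the same type as in Theorem~\ref{thm:well_posedness_euler}, namely $\alpha(T)\simeq 1+e^T+Te^T$ and $\beta(T)\simeq 1+e^{T^2e^T}$; hence $\S$ is a contraction for $T$ small enough.

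To conclude, the Banach fixed-point theorem yields a (locally unique) fixed point of $\S$ in $C([0,T];\Linfty)\times C([0,T];C_b(\Rddiag))$ on a short time interval; this is the sought solution of \eqref{eq:euler} for $p=\infty$, and absolute continuity in time is inherited from Proposition~\ref{prop:infty_apriori_props}. Because the bound $\norm{\eta}_\infty\le\norm{\eta_0}_\infty+C_\w$ and the resulting constants do not degenerate, the local solution is extended to an arbitrary interval $[0,T]$ by the usual iteration of the construction, as in \cite[Theorem~3.6]{esposito_mikolas_2024}. Since the whole scheme runs parallel to the $L^2_\mu$ case, the argument is essentially routine; the only genuinely delicate points are (i) producing the a priori $\Linfty$ bound on $r$ — which must be obtained through a pointwise‑in‑$x$ Grönwall inequality valid $\mu$‑a.e. before passing to the essential supremum, and is precisely the content of Proposition~\ref{prop:infty_apriori_props} — and (ii) the bookkeeping of constants ensuring the contraction time does not shrink along the iteration, so that the solution is obtained on all of $[0,T]$. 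The joint antisymmetry of $\Phi$ is not used in this argument, cf. Remark~\ref{rem:antisymmetry_eta_interpolation}.
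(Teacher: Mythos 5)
Your proposal is correct and follows exactly the strategy the paper prescribes for this result: the paper explicitly states that Theorem~\ref{thm:well_posedness_euler_infty} and Proposition~\ref{prop:infty_apriori_props} are proven by "following the same strategy as in Section~\ref{sec:euler-co-ncl}, by exploiting the Banach fixed-point theorem, with $\mu\in\mathcal{P}(\Rd)$," and defers the details to~\cite{mikol2024a}. Your reproduction of that strategy — the Duhamel solution map $\S=(\S_V,\S_\w)$, the a priori bounds on $\|\eta\|_\infty$ and $\|r\|_{\infty,\Linfty}$ via Grönwall, the $I$--$II$--$III$ splitting with the simplification that the uniform bound \eqref{eq:infty_V_bound} and $\mu(\Rd)=1$ replace the Cauchy--Schwarz/Minkowski steps of the $L^2_\mu$ case, the contraction on a short interval, and iteration thanks to non-degenerating constants — is the intended argument, and your observation that joint antisymmetry of $\Phi$ is not used (cf.\ Remark~\ref{rem:antisymmetry_eta_interpolation}) matches the paper's remark.

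One small point of care: $AC([0,T];X)$ equipped with the sup-in-time distance is not itself complete, so strictly speaking one should (as the paper does in the $L^2_\mu$ proof) close the fixed-point argument in $C([0,T];\Linfty)\times C([0,T];C_b(\Rddiag))$ and then deduce absolute continuity a posteriori from Proposition~\ref{prop:infty_apriori_props}. You do end up concluding in $C([0,T];\cdot)$ and invoking the a posteriori $AC$ regularity, so this is a bookkeeping slip in the setup rather than a gap; nevertheless the clause "closed (hence complete) invariant subset of $\mathcal{AC}_T^\infty$" should be replaced by the corresponding subset of $C([0,T];\Linfty)\times C([0,T];C_b(\Rddiag))$.
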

\subsection{Long-time behaviour}\label{subsec:long_time_behaviour}
Let us now focus on the long-time asymptotics for~\eqref{eq:euler}. We assume the vertices of the graph belong to a compact space, i.e. $\supp \mu = K \subset\Rd$ for $K$ compact. Analogously to the previous section, we use the following notation $\Kdiag= \{(x,y) \in K^2\ | \ x \neq y\}$. The class of velocities we have in mind is as follows. 
\begin{assumption}\label{ass:V_differentiable}
Assume that  $V:[0,\infty) \times \LinftyK \to \V^{as}(\Kdiag)$ is continuous in time and is of the form 
\[
V(r_t(x),r_t(x')) = \alpha(r_t(x)) - \alpha(r_t(x')),
\]
where $\alpha \in C^1_b(\mathbb{R})$ is increasing on compact sets, i.e. 
\[
\alpha'(y)=\frac{d}{dy}\alpha(y) >c>0\ , \qquad \mbox{for } y \in \Omega\subset\mathbb{R} \mbox{ compact },
\]
and a constant $c$. We denote by ${\alpha'_*} := \inf_{y \in \Omega} \alpha'(y)$.
\end{assumption}
Before proving the main result of this section we recall some technical results --- the first three are a direct adaptation of \cite[Theorem 2.1]{Nastassia_Long_Time}, \cite[Theorem 2.2]{Nastassia_Long_Time} and \cite[Lemma 2.3]{Nastassia_Long_Time}, respectively (see also \cite{berliocchi1973integrandes} and \cite{Danskin} for the original reference of Theorem \ref{thm:scorza} and  Theorem \ref{thm:danskin}, respectively). 

\begin{theorem}[Scorza-Dragoni]\label{thm:scorza}
Let $\Omega \subset \mathbb{R}^d$ be a locally compact set and $f : \mathbb{R}_+ \times \Omega \to \R$ be such that $x \in \Omega \mapsto f(t,x) \in \R$ is $\mu$-measurable for each $t \geq 0$, and $t \in \mathbb{R}_+ \mapsto f(t,x) \in \R$ is continuous for $\mu$-almost every $x \in \Omega$. Then, for every $\varepsilon > 0$, there exists a compact set $\Omega_\varepsilon \subset \Omega$ satisfying $\mu(\Omega \setminus \Omega_\varepsilon) < \varepsilon$ and such that the restricted map $f : \mathbb{R}_+ \times \Omega_\varepsilon \to \R$ is continuous.
\end{theorem}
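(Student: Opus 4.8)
The plan is to recast the two hypotheses --- measurability in $x$ for each $t$, and continuity in $t$ for $\mu$-a.e. $x$ --- as a single measurability statement for a map valued in a Polish space, and then to apply Lusin's theorem there. Concretely, I would equip $C(\R_+)$, the space of continuous functions $[0,\infty)\to\R$, with the topology of uniform convergence on compact sets, induced by the complete metric $\mathrm{d}_C(g,h)=\sum_{k\ge 1}2^{-k}\min\{1,\sup_{s\in[0,k]}|g(s)-h(s)|\}$; this makes $(C(\R_+),\mathrm{d}_C)$ separable and completely metrizable. Let $N\subset\Omega$ be the $\mu$-null set of those $x$ for which $t\mapsto f(t,x)$ is not continuous, and define $F\colon\Omega\to C(\R_+)$ by $F(x)=f(\cdot,x)$ for $x\notin N$ and $F(x)\equiv 0$ for $x\in N$.

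The first step is to check that $F$ is $\mu$-measurable. Since $C(\R_+)$ is separable metric and the countable family of point evaluations $\{\mathrm{ev}_q\colon g\mapsto g(q)\}_{q\in\mathbb{Q}_+}$ consists of continuous maps that separate points of $C(\R_+)$, this family generates the Borel $\sigma$-algebra $\B(C(\R_+))$. Because taking preimages commutes with the generation of $\sigma$-algebras, $F$ is $\mu$-measurable as soon as $x\mapsto\mathrm{ev}_q(F(x))$ is $\mu$-measurable for every $q\in\mathbb{Q}_+$; for $x\notin N$ this map coincides with $x\mapsto f(q,x)$, which is $\mu$-measurable by assumption, and altering it on the $\mu$-null set $N$ does not affect $\mu$-measurability.

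The second step is Lusin's theorem. Since $\Omega$ is locally compact and $\mu$ is a (locally finite, inner regular) Radon measure, given $\varepsilon>0$ there is a compact $\Omega'_\varepsilon\subset\Omega$ with $\mu(\Omega\setminus\Omega'_\varepsilon)<\varepsilon/2$ such that $F$ restricted to $\Omega'_\varepsilon$ is continuous into $(C(\R_+),\mathrm{d}_C)$; and by inner regularity applied to $\Omega\setminus N$ there is a compact $C\subset\Omega\setminus N$ with $\mu(\Omega\setminus C)<\varepsilon/2$. Taking $\Omega_\varepsilon:=\Omega'_\varepsilon\cap C$ yields a compact set with $\mu(\Omega\setminus\Omega_\varepsilon)<\varepsilon$ which avoids $N$, so that $F(x)=f(\cdot,x)$ for every $x\in\Omega_\varepsilon$ and $F$ is continuous on $\Omega_\varepsilon$.

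Finally, joint continuity of $f$ on $\R_+\times\Omega_\varepsilon$ follows routinely: if $(t_j,x_j)\to(t_*,x_*)$ in $\R_+\times\Omega_\varepsilon$, choose $k$ with $t_j,t_*\in[0,k]$ for all large $j$ and estimate $|f(t_j,x_j)-f(t_*,x_*)|\le\sup_{s\in[0,k]}|f(s,x_j)-f(s,x_*)|+|f(t_j,x_*)-f(t_*,x_*)|$, where the first term tends to $0$ because $F(x_j)\to F(x_*)$ in $(C(\R_+),\mathrm{d}_C)$, and the second because $t\mapsto f(t,x_*)$ is continuous ($x_*\notin N$). I expect the main obstacle to be the measurability step, i.e. justifying that the $\sigma$-algebra generated by point evaluations on $C(\R_+)$ coincides with its Borel $\sigma$-algebra, so that separate measurability and continuity upgrade to measurability of the $C(\R_+)$-valued map $F$; once this is in place, the Lusin application and the final $\varepsilon$-argument are standard. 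A more elementary variant would instead apply Lusin directly to each $f(q,\cdot)$, $q\in\mathbb{Q}_+$, intersect the countably many resulting compact sets together with one avoiding $N$, and argue joint continuity by hand --- but this essentially unpacks the same reasoning and still rests on the separability of $\R_+$.
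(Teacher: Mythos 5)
The paper itself does not prove this statement: it is recorded as a recalled technical lemma, cited from the references (Theorem 2.1 of the Ayi--Pouradier Duteil paper and, originally, Berliocchi--Lasry), so there is no in-paper proof to compare against. Judged on its own, your main argument is correct and is in fact a clean route to Scorza--Dragoni. The reduction to Lusin's theorem by packaging $t\mapsto f(t,x)$ as a map $F\colon\Omega\to C(\R_+)$ is sound, and you correctly identify the one non-elementary ingredient: that the $\sigma$-algebra generated by the rational point evaluations on $C(\R_+)$ is all of $\B(C(\R_+))$. The justification you gesture at --- a countable family of Borel maps separating points of a Polish space generates its Borel $\sigma$-algebra --- is exactly Blackwell's theorem on standard Borel spaces (the point-evaluation $\sigma$-algebra is countably generated, separates points because a continuous function on $\R_+$ is determined by its values on $\mathbb{Q}_+$, and $C(\R_+)$ with the compact-open topology is Polish), so it would be worth naming that theorem rather than leaving it as \say{I expect the main obstacle}. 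Your final $\varepsilon/2+\varepsilon/2$ intersection argument and the joint-continuity estimate $|f(t_j,x_j)-f(t_*,x_*)|\le \sup_{[0,k]}|f(\cdot,x_j)-f(\cdot,x_*)|+|f(t_j,x_*)-f(t_*,x_*)|$ are both fine, provided one reads the hypotheses in the way the paper uses them ($\mu$ a finite Radon measure, so inner regularity is available --- otherwise $\mu(\Omega\setminus\Omega_\varepsilon)<\varepsilon$ could not hold anyway).

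One caution on the \say{more elementary variant} you sketch at the end: applying Lusin to each $f(q,\cdot)$ and intersecting the resulting compacts gives you continuity of $x\mapsto f(q,x)$ for each fixed rational $q$, but continuity in $x$ along rational times plus continuity in $t$ for each fixed $x$ does \emph{not} by itself give joint continuity on $\R_+\times\Omega_\varepsilon$. The missing ingredient is equicontinuity in $t$ uniformly over $x\in\Omega_\varepsilon$ (on each compact time interval), which in the classical Scorza--Dragoni proof is obtained by an Egorov-type argument applied to the moduli of continuity $\omega_k(x,\delta)=\sup\{|f(t,x)-f(s,x)|:s,t\in[0,k],\ |s-t|\le\delta\}$. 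Your main $C(\R_+)$-valued argument sidesteps this cleanly --- Lusin for the map $F$ delivers exactly the uniform-on-compacts closeness you need --- so I would drop or rephrase the \say{elementary variant} remark, since as stated it would not close.
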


\begin{theorem}[Danskin]\label{thm:danskin}
Let $\Omega \subset \mathbb{R}^d$ be a compact set and $f : \mathbb{R}_+ \times \Omega \to \mathbb{R}$ be a continuous function such that $t \in \mathbb{R}_+ \mapsto f(t,x) \in \mathbb{R}$ is differentiable for all $x \in \Omega$. Then, the application $g : t \in \mathbb{R}_+ \mapsto \max_{x \in \Omega} f(t,x) \in \mathbb{R}$ is differentiable $\mathscr{L}^1$-almost everywhere, with
\[
\frac{d}{dt} g(t) = \max_{x \in \overline{\Omega}(t)} \partial_t f(t,x)
\]
for $\mathscr{L}^1$-almost every $t \geq 0$, where we introduced the notation $\overline{\Omega}(t) := \arg\max_{x \in \Omega} f(t,x)$.
\end{theorem}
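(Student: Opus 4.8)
The plan is to run the classical envelope (\say{Danskin}) argument, working throughout with one–sided difference quotients of $g$ and using compactness of $\Omega$ together with continuity of $f$ (and of $\partial_t f$) to identify $g'$. First I would record that $g$ is continuous: for every $T>0$ the function $f$ is uniformly continuous on the compact set $[0,T]\times\Omega$, so $|g(t)-g(s)|\le\sup_{x\in\Omega}|f(t,x)-f(s,x)|\to0$ as $s\to t$; moreover, since $f(t,\cdot)$ is continuous on the compact $\Omega$, the set $\overline{\Omega}(t)$ is non–empty and compact for every $t\ge0$, so all the maxima appearing below are attained.

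The easy half is the lower bound on the right Dini derivative. For $t\ge0$ and $x^\ast\in\overline{\Omega}(t)$, using $g(t+h)\ge f(t+h,x^\ast)$ and $g(t)=f(t,x^\ast)$ gives
\[
\frac{g(t+h)-g(t)}{h}\ \ge\ \frac{f(t+h,x^\ast)-f(t,x^\ast)}{h},\qquad h>0,
\]
whose right–hand side tends to $\partial_t f(t,x^\ast)$ as $h\to0^+$ by differentiability of $t\mapsto f(t,x^\ast)$; taking the supremum over $x^\ast\in\overline{\Omega}(t)$ yields $\liminf_{h\to0^+}h^{-1}(g(t+h)-g(t))\ge\max_{x\in\overline{\Omega}(t)}\partial_t f(t,x)$. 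The same computation with $h<0$ gives $\limsup_{h\to0^-}h^{-1}(g(t+h)-g(t))\le\min_{x\in\overline{\Omega}(t)}\partial_t f(t,x)$.

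The heart of the proof is the matching upper bound. I would pick $h_n\to0^+$ realising $\limsup_{h\to0^+}h^{-1}(g(t+h)-g(t))$ and, for each $n$, a maximiser $x_n\in\overline{\Omega}(t+h_n)$, so that $g(t+h_n)=f(t+h_n,x_n)$ while $g(t)\ge f(t,x_n)$. By compactness, along a subsequence $x_n\to\bar x$, and joint continuity of $f$ together with continuity of $g$ force $f(t,\bar x)=\lim_n f(t+h_n,x_n)=\lim_n g(t+h_n)=g(t)$, i.e.\ $\bar x\in\overline{\Omega}(t)$. Then, applying the mean value theorem to $s\mapsto f(s,x_n)$,
\[
\frac{g(t+h_n)-g(t)}{h_n}\ \le\ \frac{f(t+h_n,x_n)-f(t,x_n)}{h_n}\ =\ \partial_t f(\tau_n,x_n),\qquad\tau_n\in(t,t+h_n),
\]
and $\partial_t f(\tau_n,x_n)\to\partial_t f(t,\bar x)\le\max_{x\in\overline{\Omega}(t)}\partial_t f(t,x)$, using continuity of $\partial_t f$ on $[0,T]\times\Omega$ — the (tacit) Danskin hypothesis, which in the setting of Section~\ref{subsec:long_time_behaviour} holds on the relevant compact time–vertex set after the Scorza–Dragoni reduction of Theorem~\ref{thm:scorza} together with the a priori bounds of Sections~\ref{sec:euler-co-ncl}--\ref{subsec:well_posedness_infty}. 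Combined with the lower bound, $g'_+(t)=\max_{x\in\overline{\Omega}(t)}\partial_t f(t,x)$ exists for every $t$, and the symmetric argument gives $g'_-(t)=\min_{x\in\overline{\Omega}(t)}\partial_t f(t,x)$.

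To close, I would note that on each $[0,T]$ both one–sided derivatives are bounded in absolute value by $\sup_{[0,T]\times\Omega}|\partial_t f|<\infty$, so by the standard fact that a continuous function whose right derivative is bounded by $M$ has increments bounded by $M|t-s|$, $g$ is locally Lipschitz, hence differentiable $\mathscr{L}^1$–a.e., and at every differentiability point $g'(t)=g'_+(t)=\max_{x\in\overline{\Omega}(t)}\partial_t f(t,x)$, which is the claim. I expect the only genuine obstacle to be the upper–bound step: evaluating $g$ at a fixed maximiser is immediate, but the upper estimate on $h^{-1}(g(t+h)-g(t))$ necessarily involves maximisers $x_n$ of the \emph{perturbed} problems, whose limit must be shown to maximise $f(t,\cdot)$ and along which the $t$–difference quotients of $f$ must be controlled uniformly — this equi–differentiability is precisely what the continuity of $\partial_t f$ supplies.
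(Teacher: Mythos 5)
Your argument is the standard Danskin/envelope proof and is correct \emph{under} the continuity-of-$\partial_t f$ hypothesis that you rightly flag as tacit; without it the stated hypotheses do not even guarantee that $g$ is of bounded variation, so the claimed $\mathscr{L}^1$-a.e.\ differentiability would not follow. (The paper never proves Theorem~\ref{thm:danskin} --- it cites it --- and where it is applied, in Theorem~\ref{thm:rate_of_sup_norm}, a.e.\ differentiability of $L_\varepsilon$ is established separately, from the equi-Lipschitzness of the family $t\mapsto r_t(x)$ furnished by the $\AC$ structure of $r$, with Theorem~\ref{thm:danskin} only supplying the derivative formula afterwards.) One simplification worth noting: the mean-value-theorem/subsequence step in your ``heart of the proof'' does more work than the a.e.\ statement requires. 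Once $g$ is known to be $\mathscr{L}^1$-a.e.\ differentiable, fix any differentiability point $t$ and any $x^\ast\in\overline{\Omega}(t)$, and pass to the limit in
\[
\frac{g(t+h)-g(t)}{h}\ \ge\ \frac{f(t+h,x^\ast)-f(t,x^\ast)}{h}
\]
as $h\to0^+$ to obtain $g'(t)\ge\partial_t f(t,x^\ast)$, and as $h\to0^-$ (where the inequality flips) to obtain $g'(t)\le\partial_t f(t,x^\ast)$. Hence $g'(t)=\partial_t f(t,x^\ast)$ for \emph{every} $x^\ast\in\overline{\Omega}(t)$, and the formula follows immediately, with no need for perturbed maximisers $x_n$, compactness extraction, or joint continuity of $\partial_t f$ in this step. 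Your MVT argument proves the stronger classical statement that the one-sided derivative $g'_+$ exists and equals the max at \emph{every} $t$, which is more than is asked. In short, the proof is correct but slightly over-engineered for this statement: the only genuine extra input beyond the stated hypotheses is whatever guarantees a.e.\ differentiability of $g$ --- you supply this via boundedness of $\partial_t f$, and you correctly identify it as the implicit gap in the theorem as worded.
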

\begin{lemma}[Interior estimates for supremums]\label{lemma:quant_int_estimates}
Let $\Omega \subset \mathbb{R}^d$ be a compact set and $f \in L^\infty_\mu(\Omega, \mathbb{R}^d)$. Then, for every $\delta > 0$, there exists $\varepsilon > 0$ such that
\begin{equation}\label{eq:interior_estimates}
\esssup_{x \in \Omega}f(x) - \delta \leq \esssup_{x \in \Omega^\varepsilon}f(x) \leq \esssup_{x \in \Omega} f(x),
\end{equation}
whenever $\Omega_\varepsilon \subset \Omega$ is a measurable set satisfying $\mu(\Omega \setminus \Omega_\varepsilon) < \varepsilon$. In particular, it holds that
\[
\esssup_{x \in \Omega^\varepsilon}f(x) \xrightarrow{\varepsilon \to 0^+} \esssup_{x \in \Omega}f(x),
\]
for each family of sets  {$(\Omega_\varepsilon)_{\varepsilon>0} \subset \mathrm{P}(\Omega)$} satisfying these properties.
\end{lemma}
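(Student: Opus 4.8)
The plan is to prove the two inequalities in~\eqref{eq:interior_estimates} separately, the right one being trivial and the left one being the heart of the matter, and then deduce the convergence statement as an immediate consequence.

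First I would observe that the right-hand inequality $\esssup_{x\in\Omega^\varepsilon}f(x)\le\esssup_{x\in\Omega}f(x)$ holds for \emph{any} measurable subset $\Omega_\varepsilon\subset\Omega$ of positive measure, simply because the essential supremum is monotone under inclusion of sets (if $f\le c$ $\mu$-a.e.\ on $\Omega$, then $f\le c$ $\mu$-a.e.\ on $\Omega_\varepsilon$, using that $\mu$ restricted to $\Omega_\varepsilon$ is absolutely continuous w.r.t.\ $\mu$ on $\Omega$). Note that one needs $\mu(\Omega_\varepsilon)>0$ for the essential supremum over $\Omega_\varepsilon$ to be well-defined; this is guaranteed for $\varepsilon$ small since $\mu(\Omega_\varepsilon)\ge\mu(\Omega)-\varepsilon$ and we may assume $\mu(\Omega)>0$ (otherwise the statement is vacuous).

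For the left-hand inequality, fix $\delta>0$ and set $M:=\esssup_{x\in\Omega}f(x)$. By definition of the essential supremum, the superlevel set $A_\delta:=\{x\in\Omega : f(x)>M-\delta\}$ has strictly positive $\mu$-measure, say $\mu(A_\delta)=:c_\delta>0$. Now choose $\varepsilon:=c_\delta/2$ (or any $\varepsilon<c_\delta$). Then for any measurable $\Omega_\varepsilon\subset\Omega$ with $\mu(\Omega\setminus\Omega_\varepsilon)<\varepsilon<c_\delta=\mu(A_\delta)$, we must have $\mu(A_\delta\cap\Omega_\varepsilon)=\mu(A_\delta)-\mu(A_\delta\setminus\Omega_\varepsilon)\ge\mu(A_\delta)-\mu(\Omega\setminus\Omega_\varepsilon)>c_\delta-\varepsilon=c_\delta/2>0$. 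Hence $A_\delta\cap\Omega_\varepsilon$ has positive measure and on it $f>M-\delta$, so $\esssup_{x\in\Omega^\varepsilon}f(x)\ge M-\delta$, which is exactly the left inequality. The main (and only) subtlety here is the choice of $\varepsilon$ as a function of $\delta$ through the measure $c_\delta$ of the superlevel set --- this is where the ``there exists $\varepsilon>0$'' quantifier is realized --- and making sure the set difference estimate $\mu(A_\delta\cap\Omega_\varepsilon)\ge\mu(A_\delta)-\mu(\Omega\setminus\Omega_\varepsilon)$ is applied correctly.

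Finally, the convergence $\esssup_{x\in\Omega^\varepsilon}f(x)\xrightarrow{\varepsilon\to0^+}\esssup_{x\in\Omega}f(x)$ follows directly: given $\delta>0$, pick the $\varepsilon_0>0$ produced above; then for every family $(\Omega_\varepsilon)_{\varepsilon>0}\subset\mathrm{P}(\Omega)$ with $\mu(\Omega\setminus\Omega_\varepsilon)<\varepsilon$ and every $\varepsilon<\varepsilon_0$ we have $M-\delta\le\esssup_{x\in\Omega^\varepsilon}f(x)\le M$ by~\eqref{eq:interior_estimates}, which is the definition of the limit. I do not anticipate any real obstacle; the whole argument is a careful bookkeeping of measures of superlevel sets, and the only thing to be attentive about is that all the sets over which we take essential suprema have positive $\mu$-measure so that the quantities are meaningful.
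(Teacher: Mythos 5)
Your proof is correct and follows essentially the same strategy as the paper: both isolate the superlevel set $A_\delta=\{x\in\Omega:f(x)>\esssup_\Omega f-\delta\}$ of positive measure, choose $\varepsilon$ smaller than $\mu(A_\delta)$, and argue that $A_\delta\cap\Omega_\varepsilon$ still has positive measure. The only cosmetic difference is that you obtain the positivity directly from the subadditivity estimate $\mu(A_\delta\cap\Omega_\varepsilon)\ge\mu(A_\delta)-\mu(\Omega\setminus\Omega_\varepsilon)>0$, whereas the paper argues by contradiction via $\mu(\Omega_\delta\cup\Omega_\varepsilon)=\mu(\Omega_\delta)+\mu(\Omega_\varepsilon)>\mu(\Omega)$; your version is a touch more direct and quantitative but is the same idea.
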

\begin{proof}
    The second inequality in \eqref{eq:interior_estimates} is satisfied for any subset of $\Omega$. For the first inequality, by definition of the essential supremum of a set, for every $\delta>0$ there exists a set $\Omega_\delta \subset \Omega$ of positive measure such that 
    \[
    f(x) > \esssup_{x\in \Omega} f(x) - \delta 
    \]
    for $\mu$-a.e. $x \in \Omega_\delta$. Then, by choosing $\varepsilon>0$ satisfying $\varepsilon<\mu(\Omega_\delta)$ and considering any closed set $\Omega_\varepsilon \subset \Omega$ with $\mu(\Omega \backslash \Omega_\varepsilon) < \varepsilon$, it necessarily holds that $\mu(\Omega_\delta \cap \Omega_\varepsilon) >0$. Indeed assuming the measure is $0$, by contradiction, one would have 
    \[
    \mu(\Omega_\delta \cup \Omega_\varepsilon) = \mu(\Omega_\delta) + \mu(\Omega_\varepsilon)> \mu(\Omega)\ ,
    \]
    which is a contradiction since both sets are subsets of $\Omega$. Hence, by construction we have 
     \[
    f(x) > \esssup_{x\in \Omega} f(x) - \delta\ , 
    \]
    for $\mu$-a.e. $x \in \Omega_\delta \cap\Omega_{\varepsilon}$, which concludes the proof. 
\end{proof}

We will also resort to the following comparison results.
\begin{lemma}\label{lemma:generalized_gronwall}
    Let $\phi:\R \to \R$ be a locally-Lipschitz function. Assume that $f\in C(\R^+,\R)$ solves the inequality $f(t) \leq f(0) + \int_0^t\phi(f(s))\dd s$ with $f(0)=a \in \R$ and $g\in C(\R^+,\R)$ solves the equation $g(t) = g(0) + \int_0^t\phi(g(s))\dd s$ with $f(0)\leq g(0)$. Then, for $t\geq0$,
    $f(t) \leq g(t)$.
\end{lemma}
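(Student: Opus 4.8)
The plan is a comparison argument. Fix $T>0$; it suffices to prove $f\le g$ on $[0,T]$ and then let $T\to\infty$. Since $f,g\in C([0,T])$, their ranges lie in a compact interval $\Omega\subset\R$ on which, by hypothesis, $\phi$ is Lipschitz with some constant $L>0$. Write $h:=f-g\in C([0,T])$ and $h_+:=\max\{h,0\}$; note $h(0)=a-g(0)\le 0$, so $h_+(0)=0$. The goal is to show $h_+\equiv 0$.

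The key step is the estimate $h_+(t)\le L\int_0^t h_+(s)\,\dd s$ for every $t\in[0,T]$. If $h(t)\le 0$ this is trivial, so assume $h(t)>0$ and set $\tau:=\sup\{s\in[0,t]:h(s)\le 0\}$, which is well defined because $h(0)\le 0$ and $h$ is continuous; then $\tau<t$, $h(\tau)=0$, and $f>g$ on $(\tau,t]$. Subtracting the exact identity $g(t)=g(\tau)+\int_\tau^t\phi(g(s))\,\dd s$ from the inequality $f(t)\le f(\tau)+\int_\tau^t\phi(f(s))\,\dd s$ and using $h(\tau)=0$ gives $h(t)\le\int_\tau^t\big(\phi(f(s))-\phi(g(s))\big)\,\dd s$. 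Since $f(s)>g(s)$ on $(\tau,t]$, we have $\phi(f(s))-\phi(g(s))\le L\,|f(s)-g(s)|=L\,h(s)=L\,h_+(s)$, so $h(t)\le L\int_\tau^t h_+(s)\,\dd s\le L\int_0^t h_+(s)\,\dd s$. As $h_+\ge 0$ is continuous with $h_+(0)=0$, iterating this bound (the elementary Gr\"onwall inequality with zero constant) forces $h_+\equiv 0$ on $[0,T]$, i.e.\ $f\le g$ there.

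The point that needs care is the inequality $f(t)-f(\tau)\le\int_\tau^t\phi(f(s))\,\dd s$: this is the integral bound for $f$ over the sub-interval $[\tau,t]$, not merely over $[0,t]$. In the application in Section~\ref{subsec:long_time_behaviour} the relevant function (the essential supremum of $r_t$) is produced precisely in this localized form — one differentiates via Danskin's theorem (Theorem~\ref{thm:danskin}) and integrates — so the step is legitimate there. If one wished to work only from the ``$[0,t]$'' version of the inequality, the standard remedy is to perturb: let $g_\varepsilon$ solve $g_\varepsilon'=\phi(g_\varepsilon)+\varepsilon$ with $g_\varepsilon(0)=g(0)+\varepsilon$; by continuous dependence (here one uses that $\phi$ is locally Lipschitz) $g_\varepsilon$ exists on $[0,T]$ for $\varepsilon$ small and converges uniformly to $g$, one shows $f<g_\varepsilon$ on $[0,T]$ by a first-contact argument at $\tau:=\inf\{s:f(s)=g_\varepsilon(s)\}$, and one lets $\varepsilon\to 0$. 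In either route the only genuine work is the bookkeeping at the first time $f$ reaches $g$ (resp.\ $g_\varepsilon$); everything else is Gr\"onwall, and the non-monotonicity of $\phi$ — which is what prevents a naive Gr\"onwall estimate directly on $|f-g|$ — is handled exactly by restricting attention to the interval $(\tau,t]$ on which $f>g$.
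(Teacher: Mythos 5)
Your main argument coincides with the paper's: locate the last time $\tau$ (your $\tau$, the paper's $c$) before which $f\le g$, and close with Gr\"onwall on $(\tau,t]$ using the Lipschitz constant of $\phi$ on the compact range of $f,g$. You are also right to flag the delicate step: it uses the \emph{subinterval} integral inequality $f(t)\le f(\tau)+\int_\tau^t\phi(f(s))\,\dd s$, which the stated hypothesis (only the $[0,t]$ version) does not yield. The paper's proof writes the same bound $h(t)\le h(c)+\int_c^t(\phi(f)-\phi(g))\,\dd s$ without comment, so this is a gap in how the lemma is \emph{phrased} rather than a difference between the two proofs; and your observation that the application (Theorems~\ref{thm:rate_of_sup_norm} and~\ref{thm:rate_of_inf}) produces a genuine differential inequality for the quantity in question, hence the integral form over every subinterval, is correct.

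Your proposed ``remedy'' by perturbation, however, does \emph{not} recover the lemma from the $[0,t]$ hypothesis alone, and you should not assert that it does. At the first-contact time with $g_\varepsilon$ the $[0,t]$ inequality for $f$ still carries the whole-history term $\int_0^\tau(\phi(f)-\phi(g_\varepsilon))\,\dd s$, whose sign is uncontrolled when $\phi$ is not monotone, and the extra $-\varepsilon(1+\tau)$ does not overcome it; any version of the one-sided comparison argument requires a differential or subinterval-integral control on $f$, not merely the $[0,t]$ integral bound. In fact the lemma is \emph{false} as literally written: take $\phi(x)=x^2-2x$ and $g(0)=1$, so that $g(t)=2/(1+e^{2t})\downarrow 0$, and let $f$ equal a small negative constant (say $-0.1$) on a long interval $[0,T_1]$, so that $f(0)+\int_0^t\phi(f)\,\dd s$ drifts up to roughly $0.2\,T_1$, and then rise steeply but continuously to a value near $2$; such an $f$ satisfies the $[0,t]$ inequality throughout yet overtakes $g$. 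The honest statement should assume the subinterval inequality, equivalently $f$ absolutely continuous with $f'(t)\le\phi(f(t))$ for a.e.\ $t$, which is exactly what the application furnishes and what both your argument and the paper's actually use.
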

\begin{proof}
    Let $h(t):= f(t) - g(t)$ and, by contradiction, assume there is $b > 0$ such that $h(b) > 0$. By continuity of $h$, Bolzano's theorem implies that there must be a $c \in {[0},b)$ such that $h(t) \ge 0$ for $t \in (c,b]$ and $h(c) = 0$. Then, for $t \in (c,b]$, we have 
    \[
    h(t) \le h(c) + \int_c^t\phi(f(s)) - \phi(g(s))\dd s \leq h(c) + L\int_c^t(f(s) - g(s))\dd s \leq h(c) + L \int_c^t h(s)\dd s ,
    \]
    where, for $t\in(c,b]$, $L>0$ is the local-Lipschitz constant of $\phi$. Then, by Gr\"onwall's inequality 
    \[
    h(b) \leq h(c)e^{L(b-c)} = 0 \ ,
    \]
    which gives the contradiction. 
\end{proof}
The opposite inequality holds and can be proven in a similar way.
\begin{lemma}\label{lemma:lower_diff_ineq}
    Let $\phi:\R \to \R$ be a locally-Lipschitz function. Assume that $f\in C(\R^+,\R)$ solves the inequality $f(t) \geq f(0) + \int_0^t\phi(f(s))\dd s$ with $f(0)=a \in \R$ and $g\in C(\R^+,\R)$ solves the equation $g(t) = g(0) 
 + \int_0^t\phi(g(s))\dd s$ with $f(0)\geq g(0)$. Then for, $t\geq0$, $f(t) \geq g(t)$
\end{lemma}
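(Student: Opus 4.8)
The plan is to mirror the proof of Lemma~\ref{lemma:generalized_gronwall}, exchanging the roles of the two functions and reversing all the inequalities. I would set $h(t) := g(t) - f(t)$, so that $h \in C(\R^+,\R)$ and $h(0) = g(0) - f(0) \leq 0$ by the hypothesis $f(0) \geq g(0)$, and then argue by contradiction: if there were $b > 0$ with $h(b) > 0$, continuity of $h$ together with $h(0) \leq 0$ and Bolzano's theorem would produce a point $c \in [0,b)$ — concretely $c := \sup\{s \in [0,b] : h(s) \leq 0\}$ — at which $h(c) = 0$, while $h(t) \geq 0$ for every $t \in (c,b]$.

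The next step is the Grönwall bootstrap on $[c,b]$. Since $f$ and $g$ are continuous on the compact interval $[0,b]$, their ranges lie in a compact set $\Omega_b \subset \R$, on which $\phi$ admits a Lipschitz constant $L > 0$. Using that $g$ solves the equation and $f$ the reversed integral inequality — just as in Lemma~\ref{lemma:generalized_gronwall}, this is used in the localised form $f(t) \geq f(c) + \int_c^t \phi(f(s))\,\dd s$ on $[c,b]$ — together with $h(c) = 0$ and $h \geq 0$ on $(c,b]$, I would obtain, for $t \in (c,b]$,
\[
h(t) \;\leq\; h(c) + \int_c^t\bigl(\phi(g(s)) - \phi(f(s))\bigr)\,\dd s \;\leq\; h(c) + L\int_c^t |g(s)-f(s)|\,\dd s \;=\; h(c) + L\int_c^t h(s)\,\dd s .
\]
Grönwall's inequality then gives $h(b) \leq h(c)\,e^{L(b-c)} = 0$, contradicting $h(b) > 0$; hence $h \leq 0$ on $\R^+$, i.e. $f(t) \geq g(t)$ for all $t \geq 0$.

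The only delicate point — and it is exactly the one already present in the proof of Lemma~\ref{lemma:generalized_gronwall} — is justifying the first inequality in the display, that is, passing from the hypothesis that $f$ satisfies $f(t) \geq f(0) + \int_0^t\phi(f)$ to the localised bound $f(t) \geq f(c) + \int_c^t \phi(f(s))\,\dd s$ on $[c,b]$; everything else (the Bolzano step, the local Lipschitz estimate on $\Omega_b$, and the final Grönwall bound) is routine. Since the contradiction argument is perfectly symmetric to that of Lemma~\ref{lemma:generalized_gronwall}, no genuinely new idea is required.
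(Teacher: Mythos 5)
Your proposal is exactly the proof the paper intends: the paper itself gives no argument for Lemma~\ref{lemma:lower_diff_ineq} beyond the remark that it \say{can be proven in a similar way} to Lemma~\ref{lemma:generalized_gronwall}, and you have faithfully reproduced that proof with the roles of $f$ and $g$ exchanged and all inequalities reversed. You are also right to single out the localization step as the one genuinely delicate point, and it is worth being precise about why. The stated hypothesis is only the \emph{from-zero} integral inequality $f(t) \geq f(0) + \int_0^t \phi(f(s))\,\dd s$, whereas the Gr\"onwall bootstrap on $[c,b]$ really uses $f(t) \geq f(c) + \int_c^t \phi(f(s))\,\dd s$; the former controls $f(t)-f(0)$, not $f(t)-f(c)$, and the implication from one to the other fails in general (one can build continuous $f$ satisfying the from-zero inequality that dip below and then re-cross $g$, precisely because the hypothesis does not prevent $f$ from gaining ``slack'' early on and spending it later). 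So both your write-up and the paper's own proof of Lemma~\ref{lemma:generalized_gronwall} tacitly assume the localized form. This is harmless where the lemmas are actually invoked: in Theorems~\ref{thm:rate_of_sup_norm} and~\ref{thm:rate_of_inf} the quantities $\|r_t\|_{\LinftyK}$ and $r_{t,*}$ are absolutely continuous in time and the bounds are derived from a differential inequality holding $\mathscr{L}^1$-a.e., which upon integration gives the integral inequality between \emph{any} two times $c\leq t$, not just from $0$. A clean fix is therefore to state the hypothesis of Lemmas~\ref{lemma:generalized_gronwall} and~\ref{lemma:lower_diff_ineq} in the localized form $f(t) \geq f(s) + \int_s^t\phi(f)$ for all $0\leq s\leq t$ (or, equivalently for the intended applications, as an a.e.\ differential inequality for an absolutely continuous $f$); with that modification your argument closes without any further work.
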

Throughout this section, we consider \eqref{eq:euler} on the time domain $[0,\infty)$. The well-posedness follows upon adapting assumptions \ref{ass:infty_w_continuous}-\ref{ass:infty_omega_bounded}, \eqref{eq:w_integrable_time}, \eqref{eq:infty_V_bound} and \eqref{eq:infty_velocity_Lipschitz_second_moment} to hold on $[0, \infty)$, and noticing the solution can be extended to $[0,\infty)$ in view of the bounds on the respective norms. To obtain our long-time behaviour result we need to impose some further conditions on $\eta$. First, let us recall that we can obtain an explicit representation for $\eta$ given by
\begin{equation}\label{eq:eta_explicit}
    \eta_t(x,y) = e^{-t}\eta_0(x,y) + \int_0^t e^{-(t-s)}\w_s[r](x,y) \dd s\ .
\end{equation}
We refer the reader to \cite[Lemma A.0.1]{mikol2024a} for a derivation of \eqref{eq:eta_explicit}. Using \eqref{eq:eta_explicit}, we can obtain positivity preservation for the edge-weight function $\eta$ as well as a sharper bound on its supremum norm which we present in the following two results.
\begin{lemma}\label{lemma:infty_eta_positive}
    Assume that $(r,\eta) \in AC([0,\infty), \Linfty)\times AC([0,\infty), C_b(\Kdiag))$ is the solution of \eqref{eq:euler} with initial datum $(r_0,\eta_0) \in (\Linfty,C_b(\Kdiag))$. If, in addition to assumptions \ref{ass:infty_w_continuous}-\ref{ass:infty_omega_bounded}, we impose
    \begin{equation}\label{eq:w_lower_bound}
    \w_*:=\inf_{t \in[0,\infty)} \inf_{(x,x') \in \Kdiag} \inf_{r \in \Linfty\backslash\{0\}}\w_t[r](x,x') > 0\ ,        
    \end{equation}
    and $\eta_0>0$,
     then $\eta_t >0$ for $t \in [0,\infty)$. Furthermore, if $\eta^0 \in C_b(\Kdiag)$ is symmetric, and $\w$ satisfies assumption \ref{ass:infty_omega_symmetric} for any $t\in[0,\infty)$ and $r\in\Linfty$, $\eta_t$ is symmetric for $t \in [0,\infty)$. 
\end{lemma}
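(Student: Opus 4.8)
The plan is to read off both assertions directly from the variation-of-constants representation \eqref{eq:eta_explicit}, which is available since $(r,\eta)$ solves \eqref{eq:euler} (cf.\ \cite[Lemma A.0.1]{mikol2024a}). Fix $(x,y)\in\Kdiag$ and $t\ge 0$ and write
\[
\eta_t(x,y)=e^{-t}\eta_0(x,y)+\int_0^t e^{-(t-s)}\,\w_s[r_s](x,y)\,\dd s .
\]
For the positivity claim I would argue termwise: the first summand is strictly positive because $\eta_0(x,y)>0$ and $e^{-t}>0$, and the integrand is nonnegative because, invoking \eqref{eq:w_lower_bound}, one has $\w_s[r_s](x,y)\ge\w_*>0$ for almost every $s$. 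Hence $\eta_t(x,y)\ge e^{-t}\eta_0(x,y)>0$; in fact this gives the quantitative bound $\eta_t(x,y)\ge e^{-t}\eta_0(x,y)+\w_*(1-e^{-t})$, which will be useful if a uniform-in-time lower bound $\eta_*>0$ is needed later.

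For symmetry, assume in addition that $\eta_0$ is symmetric and that $\w$ obeys \ref{ass:infty_omega_symmetric}, so $\w_s[r_s](x,y)=\w_s[r_s](y,x)$ for every $s$ and every $r_s\in\Linfty$. Then interchanging $x$ and $y$ in the displayed formula for $\eta_t$ leaves the first term and the integrand, hence the whole expression, unchanged, so $\eta_t(x,y)=\eta_t(y,x)$ for all $t\ge0$. A formula-free alternative for both claims goes through the scalar ODE $\partial_t\eta_t(x,y)=\w_t[r_t](x,y)-\eta_t(x,y)$: for positivity, set $t_0:=\inf\{t:\eta_t(x,y)=0\}$, and if $t_0<\infty$ then $\eta_{t_0}(x,y)=0$ by continuity while $\tfrac{d}{dt}\eta_{t_0}(x,y)=\w_{t_0}[r_{t_0}](x,y)\ge\w_*>0$, contradicting $\eta_s(x,y)>0$ for $s<t_0$; for symmetry, observe that $\eta_{\cdot}(x,y)$ and $\eta_{\cdot}(y,x)$ solve the same initial value problem and invoke uniqueness.

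There is no hard estimate here, so the only point requiring a little care is the step ``$\w_s[r_s](x,y)\ge\w_*$'': hypothesis \eqref{eq:w_lower_bound} is stated for $r\neq 0\in\Linfty$, so strictly one should complement it. I would do this by recording that solutions of \eqref{eq:euler} conserve the total mass $M=\int_K r_0\,\dd\mu$ (as in \cite[Proposition 2.8]{esposito_mikolas_2024}), so that $r_s\neq 0$ for all $s$ whenever $M\neq 0$, and noting that in the degenerate case (which, by $\Phi(0,0;v)=0$ and uniqueness, only occurs when $r\equiv 0$) the integrand $e^{-(t-s)}\w_s[0](x,y)$ is nonnegative for the standard choices such as \eqref{eq:omega_example}, and in any event $\eta_t(x,y)\ge e^{-t}\eta_0(x,y)>0$ survives as long as $\w_s[0](x,y)\ge0$. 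Thus the whole lemma is a direct consequence of \eqref{eq:eta_explicit} together with the sign hypotheses on $\eta_0$ and $\w$.
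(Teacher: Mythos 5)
Your proof is correct and essentially reproduces the paper's argument: both read off positivity and symmetry directly from the variation-of-constants representation \eqref{eq:eta_explicit}, combining $\eta_0>0$ with the lower bound on $\w$, and the symmetry claim is just invariance of that formula under swapping $x$ and $y$. You are a bit more careful than the paper in two respects: you note that \eqref{eq:w_lower_bound} is stated only for $r\neq 0$ and explain why $r_s\neq 0$ persists (via mass conservation and uniqueness), a point the paper passes over silently; and your pointwise bound $\eta_t(x,y)\ge e^{-t}\eta_0(x,y)>0$ sidesteps the fact that the paper's displayed estimate invokes $\inf_{\Kdiag}\eta_0$, which need not be strictly positive since $\Kdiag$ is not closed (though the paper's conclusion still holds because $\w_*(1-e^{-t})>0$ for $t>0$). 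The alternative ODE/uniqueness argument you sketch is a valid variant, though not needed once the explicit formula is in hand.
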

\begin{proof}
    Using the representation for $\eta$ in \eqref{eq:eta_explicit} we have, for any $(x,x') \in \Kdiag$ and $t\geq 0$,
    \begin{align*}
        \eta_t(x,x') & = e^{-t }\left(\eta_0(x, x')+\int_0^t e^{s} \omega_s\left[r\right](x, x') \mathrm{d} s\right)\
        \\
        & \geq \inf_{(x,x')\in\Kdiag}\eta_0(x,x')e^{-t} + \w_*(1-e^{-t}) > 0
    \end{align*}
    by \eqref{eq:w_lower_bound} and $\eta_0>0$. The claim about symmetry follows directly from the representation \eqref{eq:eta_explicit}.
\end{proof}

\begin{lemma}
    Let $(r,\eta) \in AC([0,T], \Linfty)\times AC([0,T], C_b(\Kdiag))$ solve \eqref{eq:euler} with initial datum $(r_0,\eta_0) \in (\Linfty,C_b(\Kdiag))$, where $\w$ satisfies \ref{ass:infty_w_continuous}-\ref{ass:infty_omega_symmetric}. Then 
    \begin{equation}
        \norm{\eta_t}_{\infty,C_b(\Kdiag)} \leq \norm{\eta_0}_{\infty,C_b(\Kdiag)} + C_{\w}\ . 
    \end{equation}
\end{lemma}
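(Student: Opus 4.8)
The plan is to exploit the explicit representation \eqref{eq:eta_explicit} for the edge-weight together with the crude bounds $e^{-t}\le 1$ and $e^{-(t-s)}\le 1$ for $0\le s\le t$; this is precisely the argument that in the $L^2_\mu$ setting produced the sharpened estimate \eqref{eq:norm_eta_bound} from the Gr\"onwall bound \eqref{eq:bound_eta_gronwall}, so no new idea is needed. First I would fix $(x,y)\in\Kdiag$ and $t\in[0,T]$, take absolute values in \eqref{eq:eta_explicit}, and use the triangle inequality to obtain
\[
|\eta_t(x,y)| \le e^{-t}\,|\eta_0(x,y)| + \int_0^t e^{-(t-s)}\,|\w_s[r](x,y)|\,\dd s .
\]
Since $e^{-t}\le 1$ and $e^{-(t-s)}\le 1$ for $s\in[0,t]$, this gives
\[
|\eta_t(x,y)| \le |\eta_0(x,y)| + \int_0^t \sup_{(x,y)\in\Kdiag}|\w_s[r](x,y)|\,\dd s .
\]

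Next I would take the supremum over $(x,y)\in\Kdiag$ on the left-hand side; the integrand on the right is already independent of $(x,y)$, so passing the supremum through costs nothing, and extending the time integral to $[0,T]$ and invoking \eqref{eq:w_integrable_time} (equivalently, boundedness of $\w$ via \ref{ass:infty_omega_bounded}) yields
\[
\norm{\eta_t}_{\infty,C_b(\Kdiag)} \le \norm{\eta_0}_{\infty,C_b(\Kdiag)} + C_{\w},
\]
which is the claim, uniformly in $t\in[0,T]$.

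There is no real obstacle here: the only points worth flagging are that the representation \eqref{eq:eta_explicit} must first be justified, which follows directly from the integral equation \eqref{eq:sol_eta} (see \cite[Lemma A.0.1]{mikol2024a}), and that interchanging the supremum over $(x,y)$ with the time integral requires only integrability of $s\mapsto\sup_{(x,y)}|\w_s[r](x,y)|$, guaranteed by \ref{ass:infty_w_continuous}. One could of course keep the factor $e^{-(t-s)}$ and obtain the slightly finer bound $\norm{\eta_t}_{\infty,C_b(\Kdiag)}\le e^{-t}\norm{\eta_0}_{\infty,C_b(\Kdiag)} + (1-e^{-t})\,\tilde C_\w$ using \ref{ass:infty_omega_bounded}, but the stated estimate suffices for the subsequent long-time analysis.
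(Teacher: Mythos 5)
Your proof is correct and is exactly the argument the paper intends; the paper's own proof text is just the one-line remark ``The proof follows directly from the representation \eqref{eq:eta_explicit},'' and your write-up is a faithful expansion of that: take absolute values in the Duhamel formula, bound the exponential factors by $1$, take the supremum over $\Kdiag$, and invoke the integrability/boundedness of $\w$. One small notational caveat worth keeping in mind: the lemma's constant is written $C_\w$, whereas the $L^\infty$ boundedness hypothesis \ref{ass:infty_omega_bounded} names its constant $\tilde C_\w$ and \eqref{eq:w_integrable_time} (which supplies a constant literally called $C_\w$) is stated in Proposition \ref{prop:infty_apriori_props} rather than in the assumption list \ref{ass:infty_w_continuous}--\ref{ass:infty_omega_symmetric} quoted by the lemma; your parenthetical ``(equivalently, boundedness of $\w$ via \ref{ass:infty_omega_bounded})'' correctly reconciles this, at the cost of $C_\w$ being replaced by $\tilde C_\w T$ (or $\tilde C_\w(1-e^{-t})$ if one keeps the exponential weight, as you observe). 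This is a harmless imprecision in the paper's statement, not in your proof.
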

\begin{proof}
The proof follows directly from the representation \eqref{eq:eta_explicit}. 
\end{proof}

In what follows we will use the notation 
\[
\eta_* := \inf_{t \in[0,\infty)}\inf_{(x,x')\in\Kdiag}\eta_t(x,x') \ ,
\]
and note that we have $\eta_*>0$ in view of Lemma \ref{lemma:infty_eta_positive}. The following theorem holds. 

\begin{theorem}\label{thm:rate_of_sup_norm}
    Fix $\Phi \equiv\Phi_{upwind}$. Let $(r,\eta)\in AC(\R^+,\LinftyK) \times AC(\R^+,C_b(\Kdiag))$ be a solution of \eqref{eq:euler} with a pointwise monotonic velocity field $V:\R^+\times \LinftyK \to \V^{as}(\Kdiag)$ satisfying  \eqref{eq:infty_V_bound}, \eqref{eq:infty_velocity_Lipschitz_second_moment}, and Assumption \ref{ass:V_differentiable}. Assume $\w:\R^+\times\LinftyK\times \Kdiag \to \R$ satisfies \ref{ass:infty_w_continuous}-\ref{ass:infty_omega_symmetric}, and \eqref{eq:w_lower_bound}.    
    Suppose $r_0 \geq 0$ and $\int_{K}r_0(x)\dd\mu(x) = M$, and that $\eta^0 \in C_b(\Kdiag)$ is symmetric and $\eta_0 >0$. Then we have
    \begin{subequations}
    \begin{equation}\label{eq:uniform_boundedness_r}
    \|r\|_{\infty,L^\infty_\mu(K)}\le \|r_0\|_{L^\infty_\mu(K)},   
    \end{equation}
    and, for any $t\ge0$,
    \begin{equation}\label{eq:max_upper_bound}
    \norm{r_t}_{\LinftyK} \leq \frac{\norm{r_0}_{\LinftyK}\alpha'_{*,0}\eta_{*,0}Me^{\alpha'_{*,0}\eta_*Mt}}{\alpha'_{*,0}\eta_*\left[\norm{r_0}_{\LinftyK}\mu(K)(e^{\alpha'_{*,0}\eta_*Mt} - 1) + M\right]},
    \end{equation}
        \end{subequations}
        {where $\alpha'_{*,0}$ indicates that $\alpha'_*$ depends on $r_0$.}
\end{theorem}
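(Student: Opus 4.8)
The plan is to exploit the upwind structure together with the pointwise monotonic velocity to track the evolution of the essential supremum $M_t:=\norm{r_t}_{\LinftyK}=\esssup_{x\in K}r_t(x)$. First I would establish the uniform bound \eqref{eq:uniform_boundedness_r}: since $\Phi\equiv\Phi_{upwind}$, for $\mu$-a.e.\ $x\in K$ one has $\partial_t r_t(x) = -r_t(x)\int_{K\setminus\{x\}}V^+_t(r_t(x),r_t(x'))\eta_t(x,x')\dd\mu(x') + \int_{K\setminus\{x\}}r_t(x')V^-_t(r_t(x),r_t(x'))\eta_t(x,x')\dd\mu(x')$. At a vertex achieving (approximately) the supremum, monotonicity forces $V^-_t(r_t(x),r_t(x'))=0$ whenever $r_t(x)\ge r_t(x')$ — because $V_t(r(x),r(x'))=\alpha(r(x))-\alpha(r(x'))\ge0$ there, so its negative part vanishes — hence the inflow term drops out and $\partial_t r_t(x)\le0$; combined with a symmetric argument for the essential infimum (using $r_0\ge0$ to see the infimum starts nonnegative, and the outflow term vanishes at a near-minimising vertex), one gets that $M_t$ is nonincreasing and $\essinf r_t$ is nondecreasing, so $0\le r_t\le\norm{r_0}_{\LinftyK}$ for all $t$. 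This also confirms the range $\Omega$ on which $\alpha'\ge\alpha'_{*,0}>0$ can be taken as $[\essinf r_0,\norm{r_0}_{\LinftyK}]$, justifying the subscript $0$ notation.

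Next I would make the differentiation of $M_t$ rigorous. The pointwise velocity $V_t(r_t(x),r_t(x'))$ is, via Definition~\ref{def:sol_to_euler} and absolute continuity of $r$ in $\LinftyK$, such that $(t,x)\mapsto r_t(x)$ is (a modification that is) continuous in $t$ for $\mu$-a.e.\ $x$; applying Scorza--Dragoni (Theorem~\ref{thm:scorza}) one restricts to compact $K_\varepsilon\subset K$ with $\mu(K\setminus K_\varepsilon)<\varepsilon$ on which $(t,x)\mapsto r_t(x)$ is jointly continuous and, for fixed $x$, differentiable in $t$ with derivative given by the right-hand side of \eqref{eq:euler}. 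Then Danskin's theorem (Theorem~\ref{thm:danskin}) gives that $M^\varepsilon_t:=\max_{x\in K_\varepsilon}r_t(x)$ is differentiable a.e.\ with $\tfrac{d}{dt}M^\varepsilon_t = \partial_t r_t(x_t)$ for $x_t\in\argmax_{K_\varepsilon}r_t$. At such a maximiser the upwind inflow term vanishes as above, so
\begin{equation*}
\frac{d}{dt}M^\varepsilon_t \le -M^\varepsilon_t\int_{K\setminus\{x_t\}}\big(\alpha(r_t(x_t))-\alpha(r_t(x'))\big)\eta_t(x_t,x')\dd\mu(x').
\end{equation*}
Using $\alpha'\ge\alpha'_{*,0}$ on the relevant range, $\alpha(r_t(x_t))-\alpha(r_t(x'))\ge\alpha'_{*,0}(r_t(x_t)-r_t(x'))=\alpha'_{*,0}(M^\varepsilon_t-r_t(x'))$ up to the $\varepsilon$-error between $M^\varepsilon_t$ and $M_t$, and $\eta_t\ge\eta_*>0$ (Lemma~\ref{lemma:infty_eta_positive}), one bounds the integral below by $\alpha'_{*,0}\eta_*\int_K(M^\varepsilon_t-r_t(x'))\dd\mu(x') = \alpha'_{*,0}\eta_*\big(\mu(K)M^\varepsilon_t - M\big)$, since mass $\int_K r_t\dd\mu = M$ is conserved. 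This yields the Riccati differential inequality $\tfrac{d}{dt}M^\varepsilon_t \le -\alpha'_{*,0}\eta_* M^\varepsilon_t\big(\mu(K)M^\varepsilon_t - M\big)$ (modulo $\varepsilon$-terms).

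Then I would integrate the comparison ODE $\dot{g} = -\alpha'_{*,0}\eta_* g(\mu(K)g - M)$, $g(0)=\norm{r_0}_{\LinftyK}$, which is the logistic equation whose explicit solution is precisely the right-hand side of \eqref{eq:max_upper_bound}; Lemma~\ref{lemma:generalized_gronwall} (with $\phi(g)=-\alpha'_{*,0}\eta_* g(\mu(K)g-M)$ locally Lipschitz) gives $M^\varepsilon_t\le g(t)$ up to controllable $\varepsilon$-corrections, and letting $\varepsilon\to0^+$ via the interior estimates of Lemma~\ref{lemma:quant_int_estimates} (which ensure $M^\varepsilon_t\to M_t$ and that the error terms — involving $\esssup_{K\setminus K_\varepsilon}$ quantities — vanish) produces \eqref{eq:max_upper_bound}. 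The main obstacle I expect is the rigorous passage through Scorza--Dragoni/Danskin on the $\varepsilon$-truncated set while keeping the constants ($\alpha'_{*,0}$, $\eta_*$, $M$) uniform and showing the $\varepsilon$-dependent errors in the differential inequality — coming both from $M^\varepsilon_t$ versus $M_t$ and from the fact that the maximiser $x_t$ lies in $K_\varepsilon$ not all of $K$, so the inflow term may not exactly vanish — can be absorbed or sent to zero; this is where Lemma~\ref{lemma:quant_int_estimates} does the essential work, together with the observation that the "bad" inflow contribution is controlled by $C_V\norm{\eta}_\infty$ times an integral over a set of $\mu$-measure $<\varepsilon$.
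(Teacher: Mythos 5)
Your proposal is correct and follows the same route the paper takes: Scorza--Dragoni to reduce to a compact $K^\varepsilon$, Danskin's theorem to differentiate $\max_{K^\varepsilon}r_t$, vanishing of the upwind inflow term at a maximiser, the mean-value theorem to lower-bound $\alpha(r_t(x_t))-\alpha(r_t(x'))$ by $\alpha'_{*,0}(L_\varepsilon(t)-r_t(x'))$, mass conservation to produce the logistic/Bernoulli inequality, and Lemma~\ref{lemma:generalized_gronwall} plus Lemma~\ref{lemma:quant_int_estimates} to compare with the explicit solution and send $\varepsilon\to0^+$. The only small difference is that you justify non-negativity of $r_t$ by a self-contained essential-infimum argument (which is essentially the content of Theorem~\ref{thm:rate_of_inf}), whereas the paper invokes the non-negativity-preservation result \cite[Proposition 5.2]{Esposito_on_a_class}; both are valid.
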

\begin{proof}
     We begin by proving~\eqref{eq:uniform_boundedness_r} which is needed for the second result. Let us note that, given our assumptions on $\w$ and $\eta^0$, we have that $\eta_t$ is positive and symmetric for $t \geq 0$. Thus, since we are restricting to the upwind interpolation, by \cite[Proposition 5.2]{Esposito_on_a_class}, \eqref{eq:euler} is non-negativity preserving on $t\geq 0$. Next, fix $\varepsilon>0$ and observe that by Theorem \ref{thm:scorza} there exists a compact set $K^\varepsilon \subset K$ such that the restriction $r:\R^+ \times K^\varepsilon\to \R$ is continuous and $\mu(\KminusKeps) < \varepsilon$. For all $t \geq 0$, we define the family of restricted supremum norms 
    \[
    L_\varepsilon(t) := \max_{x \in K^\varepsilon} |r_t(x)| \ .
    \]
Since $(r,\eta)$ is the solution of \eqref{eq:euler}, we have $r \in AC([0,\infty),L^\infty_\mu(\Rd))$. As any absolutely continuous curve can be reparametrized in time to be Lipschitz continuous (see, e.g.,~\cite[Box 5.1]{santambrogio2015optimal}), the mapping $L_\varepsilon(t)$ is Lipschitz, being the pointwise maximum of a family of equi-Lipschitz continuous functions. In particular, it is differentiable $\mathscr{L}^1$-almost everywhere by Rademacher's theorem. Let us denote by $\mathbb{V}_\varepsilon(t) := \argmax_{x \in K^\varepsilon}r_t(x)$. Then, by Theorem \ref{thm:danskin} we have 
\[
\partial_t L_\varepsilon(t) = \max_{x \in \mathbb{V}_\varepsilon(t) }\partial_t r_t(x) \ . 
\]
Thus, taking an arbitrary vertex $x \in \mathbb{V}_\varepsilon(t)$, we have 
\begin{align}\label{eq:r_t_max_dynamics}
    \partial_t r_t(x)  =  &\  -r_t(x) \int_{\Kepsminus}V_t^+(r_t(x),r_t(x'))\eta_t(x,x')\dd\mu(x') \nonumber
    \\
    &\  -r_t(x) \int_{\KminusKeps}V_t^+(r_t(x),r_t(x'))\eta_t(x,x')\dd\mu(x') 
    \\
    &\ + \int_{\Kepsminus}r_t(x')V_t^-(r_t(x),r_t(x'))\eta_t(x,x') \dd\mu(x')\nonumber
    \\
    & \ + \int_{\KminusKeps}r_t(x')V_t^-(r_t(x),r_t(x'))\eta_t(x,x') \dd\mu(x')\ . \nonumber
\end{align}
For any $x \in \mathbb{V}_\varepsilon(t)$, it holds 
\[
\int_{\Kepsminus} r_t(x')V_t^-(r_t(x),r_t(x')) \eta_t(x,x') \dd \mu(x') = 0 \ , 
\]
since $V_t^-(r_t(x),r_t(x')) = \big(\alpha(r_t(x')) - \alpha(r_t(x))\big)_{+}=0$, $x \in \mathbb{V}_\varepsilon(t)$, $x'\in \Kepsminus$ and $\alpha$ is monotonically increasing. Furthermore, since $\int_{\KminusKeps}V_t^+(r_t(x),r_t(x'))\eta_t(x,x')\dd\mu(x') \geq 0$, by $r_t \geq0$
 we infer%
\begin{align*}
    \partial_t r_t(x)   \leq & \ -r_t(x) \int_{\Kepsminus}V_t^+(r_t(x),r_t(x'))\eta_t(x,x')\dd\mu(x')
    \\
    & \ + \int_{\KminusKeps}r_t(x')V_t^-(r_t(x),r_t(x'))\eta_t(x,x') \dd\mu(x')\ .
\end{align*}
In particular, recalling that for any $x \in \mathbb{V}_\varepsilon(t)$ we have $r_t(x) = L_\varepsilon(t)$ we can rewrite
\begin{equation}\label{eq:differential_inequality_L_eps}
\begin{split}
    \partial_t L_\varepsilon(t)   &\leq   \ -L_\varepsilon(t) \int_{\Kepsminus}V_t^+(L_\varepsilon(t),r_t(x'))\eta_t(x,x')\dd\mu(x')
    \\
    & \quad + \int_{\KminusKeps}r_t(x')V_t^-(L_\varepsilon(t),r_t(x'))\eta_t(x,x') \dd\mu(x')\ 
    \\
    & \leq C_V\norm{r_t}_{\LinftyK}\norm{\eta}_{\infty,C_b(\Kdiag)}\varepsilon \ ,
\end{split}
\end{equation}
where we used the boundedness of $V$ and $\eta$ and that $\mu(\KminusKeps) < \varepsilon$. Integrating with respect to time yields
\begin{align*}
     L_\varepsilon(t) &\leq L_\varepsilon(0) +\varepsilon \int_0^tC_V\norm{r_s}_{\LinftyK}\norm{\eta}_{\infty,C_b(\Kdiag)} \dd s\ .
\end{align*}
Letting $\varepsilon\to 0^+$, Lemma \ref{lemma:quant_int_estimates} implies~\eqref{eq:uniform_boundedness_r}, i.e.,
\begin{align*}
    \norm{r_t}_{\Linfty}&\leq \norm{r_0}_{\Linfty}.
\end{align*}
By repeating the same argument up to~\eqref{eq:differential_inequality_L_eps}, we have
\begin{align*}
    \partial_t L_\varepsilon(t)   &\leq   -L_\varepsilon(t) \int_{\Kepsminus}V_t^+(L_\varepsilon(t),r_t(x'))\eta_t(x,x')\dd\mu(x')
    \\
    & \quad + \int_{\KminusKeps}r_t(x')V_t^-(L_\varepsilon(t),r_t(x'))\eta_t(x,x') \dd\mu(x')\ 
    \\
    &\leq   \ -L_\varepsilon(t) \eta_*\int_{\Kepsminus}V_t^+(L_\varepsilon(t),r_t(x'))\dd\mu(x')\\
    & \quad + C_V\norm{r_t}_{\LinftyK}\norm{\eta}_{\infty,C_b(\Kdiag)}\varepsilon \ ,
\end{align*}
where we used the boundedness of $V$ and $\eta$ and that $\mu(\KminusKeps) < \varepsilon$. The monotonicity of $V$, for $x \in \mathbb{V}_\varepsilon(t)$ and $\mu$-a.e. $x' \in K^\varepsilon$, 
\[
V^+_t(L_\varepsilon(t), r_t(x')) = V_t(L_\varepsilon(t), r_t(x')) \ . 
\]
Furthermore, for velocities satisfying Assumption \ref{ass:V_differentiable}, the mean-value theorem gives
\begin{align*}
    \alpha(L_\varepsilon(t)) - \alpha(r_t(x')) & = \alpha'(c_{t,x})(L_\varepsilon(t) - r_t(x'))
    \\
    & \geq {\alpha'_{*,0}}(L_\varepsilon(t) - r_t(x'))\ ,
\end{align*}
for some $c_{t,x} \in [r_t(x'),L_\varepsilon(t)]\subset \Omega_0:=[0,\|r_0\|_{L^\infty_\mu(K)}]$, where $\alpha'_{*,0}: = \inf_{y\in \Omega_0}\alpha(y)$. Hence,
\begin{align*}
    \partial_t L_\varepsilon(t) \leq & \ -L_\varepsilon(t) \eta_*\alpha'_{*,0}\int_{\Kepsminus}(L_\varepsilon(t) - r_t(x'))\dd\mu(x')
    \\
    & \ + C_V\norm{r_t}_{\LinftyK}\norm{\eta}_{\infty,C_b(\Kdiag)}\varepsilon \ ,
    \\
     = & \  -L^2_\varepsilon(t) \eta_*\alpha'_{*,0}\mu(K^\varepsilon) + \eta_*\alpha'_{*,0} L_\varepsilon(t)\int_{\Kepsminus}r_t(x')\dd\mu(x')
    \\
    & \ + C_V\norm{r_t}_{\LinftyK}\norm{\eta}_{\infty,C_b(\Kdiag)}\varepsilon \ ,
\end{align*}
whence, integrating with respect to time yields
\begin{align*}
     L_\varepsilon(t) &\leq L_\varepsilon(0) -\int_0^t \bigg[L^2_\varepsilon(s) \eta_*\alpha'_{*,0}\mu(K^\varepsilon) - \eta_*\alpha'_{*,0} L_\varepsilon(s)\int_{\Kepsminus}r_s(x')\dd\mu(x')\bigg]\dd s
    \\
    & \quad + \int_0^tC_V\norm{r_s}_{\LinftyK}\norm{\eta}_{\infty,C_b(\Kdiag)}\varepsilon \dd s\ .
\end{align*}
Letting $\varepsilon\to 0^+$, Lemma \ref{lemma:quant_int_estimates} implies 
\begin{align}\label{eq:bernoulli_eq_1}
    \norm{r_t}_{\Linfty}&\leq \norm{r_0}_{\Linfty} -\int_0^t \bigg[\norm{r_s}^2_{\Linfty} \eta_*\alpha'_{*,0}\mu(K) - \eta_*\alpha'_{*,0} \norm{r_s}_{\Linfty}M\bigg] \dd s\ ,
\end{align}
where we have used that $\int_{K}r_t(x)\dd\mu(x') = M$, by the conservation of mass property (see~\cite[Lemma A.0.2]{mikol2024a}).  
 Noting that \eqref{eq:bernoulli_eq_1} is an inequality version of a Bernoulli ODE (see~\cite[Lemma A.0.3]{mikol2024a}), using Lemma \ref{lemma:generalized_gronwall} we infer 
\begin{equation*}
    \norm{r_t}_{\LinftyK} \leq \frac{\norm{r_0}_{\LinftyK}\alpha'_{*,0}\eta_*Me^{\alpha'_{*,0}\eta_*Mt}}{\alpha'_{*,0}\eta_*\left[\norm{r_0}_{\LinftyK}\mu(K)(e^{\alpha'_{*,0}\eta_*Mt} - 1) + M\right]}\ ,
\end{equation*}
which gives the result. 
\end{proof}
Next, we find a lower bound for $r_{t,*}:= \inf_{x \in K} r_{t}(x)$. Note that since $r$ is lower bounded and $K$ is compact, the infimum and the minimum coincide.

\begin{theorem}\label{thm:rate_of_inf}
    Fix $\Phi \equiv\Phi_{upwind}$. Let $(r,\eta)\in AC(\R^+,\LinftyK) \times AC(\R^+,C_b(\Kdiag))$ be a solution of \eqref{eq:euler} with a pointwise monotonic velocity field $V:\R^+\times \LinftyK \to \V^{as}(\Kdiag)$ satisfying \eqref{eq:infty_V_bound}, \eqref{eq:infty_velocity_Lipschitz_second_moment}, and Assumption \ref{ass:V_differentiable}. Let $\w:\R^+\times\LinftyK\times \Kdiag \to \R$ satisfy  \ref{ass:infty_w_continuous}-\ref{ass:infty_omega_symmetric}.    
    Assume $r_0 \geq 0$ and $\int_{K}r_0(x)\dd\mu(x) = M$ and $\eta^0 \in C_b(\Kdiag)$ is positive and symmetric. Then we have 

\begin{align}\label{eq:min_lower_bound}
   r_{t,*} \geq \frac{r_{0,*}\eta_*\alpha'_{*,0}Me^{\eta_*\alpha'_{*,0}M t}}{\eta_*\alpha'_{*,0}(M+ (e^{\eta_*\alpha'_{*,0}M t}-1)\mu(K)r_{0,*})} \ .
\end{align}
\end{theorem}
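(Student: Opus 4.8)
The plan is to mirror the proof of Theorem~\ref{thm:rate_of_sup_norm}, but to track the restricted \emph{minimum} of $r_t$ instead of its maximum and to replace the upper comparison principle by the lower one, Lemma~\ref{lemma:lower_diff_ineq}. First I would collect the structural facts already available under these hypotheses: exactly as in Theorem~\ref{thm:rate_of_sup_norm}, positivity and symmetry of $\eta_0$ together with the representation \eqref{eq:eta_explicit} and Lemma~\ref{lemma:infty_eta_positive} give $\eta_t>0$ symmetric for all $t\ge0$, hence $\eta_*>0$; since $\Phi\equiv\Phi_{upwind}$ and $\eta_t$ is non-negative and symmetric, \cite[Proposition~5.2]{Esposito_on_a_class} yields non-negativity preservation, so $r_t\ge0$; and \eqref{eq:uniform_boundedness_r} gives $\norm{r_t}_{\LinftyK}\le\norm{r_0}_{\LinftyK}$, so $r_t$ stays in the compact interval $\Omega_0=[0,\norm{r_0}_{\LinftyK}]$, on which $\alpha'\ge\alpha'_{*,0}:=\inf_{y\in\Omega_0}\alpha'(y)>0$ by Assumption~\ref{ass:V_differentiable}.

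Next, for fixed $\varepsilon>0$ Scorza--Dragoni (Theorem~\ref{thm:scorza}) provides a compact $K^\varepsilon\subset K$ with $\mu(\KminusKeps)<\varepsilon$ on which $r:\R^+\times K^\varepsilon\to\R$ is continuous; set $\ell_\varepsilon(t):=\min_{x\in K^\varepsilon}r_t(x)$, which (after the time-reparametrization making the $\LinftyK$-valued curve $r$ Lipschitz, cf.\ \cite[Box~5.1]{santambrogio2015optimal}) is Lipschitz, hence differentiable $\mathscr{L}^1$-a.e. Applying Danskin's theorem (Theorem~\ref{thm:danskin}) to $-r$ gives $\partial_t\ell_\varepsilon(t)=\min_{x\in\mathbb{W}_\varepsilon(t)}\partial_t r_t(x)$ with $\mathbb{W}_\varepsilon(t):=\argmin_{x\in K^\varepsilon}r_t(x)$. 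The crucial observation is that for any minimiser $x\in\mathbb{W}_\varepsilon(t)$ and any $x'\in K^\varepsilon$ one has $r_t(x)\le r_t(x')$, so, $\alpha$ being increasing, $V_t^+(r_t(x),r_t(x'))=(\alpha(r_t(x))-\alpha(r_t(x')))_+=0$ there: the outflow integral over $K^\varepsilon$ vanishes identically. Writing out the upwind right-hand side, the residual outflow over $\KminusKeps$ is bounded in modulus by $C_V\norm{r_t}_{\LinftyK}\norm{\eta}_{\infty,C_b(\Kdiag)}\varepsilon$ (by \eqref{eq:infty_V_bound}), the inflow over $\KminusKeps$ is non-negative and can be dropped for a lower bound, and on $K^\varepsilon$ the monotonicity of $V$ and the mean value theorem for $\alpha$ on $\Omega_0$ give $V_t^-(r_t(x),r_t(x'))\ge\alpha'_{*,0}(r_t(x')-\ell_\varepsilon(t))$, while $r_t(x')\ge\ell_\varepsilon(t)\ge0$; combining these one obtains, for $\mathscr{L}^1$-a.e.\ $t$,
\[
\partial_t\ell_\varepsilon(t)\ \ge\ \eta_*\alpha'_{*,0}\,\ell_\varepsilon(t)\Bigl[\int_{K^\varepsilon}r_t(x')\dd\mu(x')-\ell_\varepsilon(t)\,\mu(K^\varepsilon)\Bigr]-C_V\norm{r_t}_{\LinftyK}\norm{\eta}_{\infty,C_b(\Kdiag)}\,\varepsilon\ .
\]

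Integrating in time and letting $\varepsilon\to0^+$, the essential-infimum version of Lemma~\ref{lemma:quant_int_estimates} (applied to $-r$) together with $\mu(K^\varepsilon)\to\mu(K)$, the conservation of mass $\int_K r_t\dd\mu=M$ (see \cite[Lemma~A.0.2]{mikol2024a}) and $\norm{r_t}_{\LinftyK}\le\norm{r_0}_{\LinftyK}$ (which kills the $O(\varepsilon)$ term) yield
\[
r_{t,*}\ \ge\ r_{0,*}+\int_0^t\eta_*\alpha'_{*,0}\bigl(M\,r_{s,*}-\mu(K)\,r_{s,*}^2\bigr)\dd s\ .
\]
This is the lower-inequality version of the logistic (Bernoulli) ODE $\dot y=\phi(y):=\eta_*\alpha'_{*,0}(My-\mu(K)y^2)$ with $y(0)=r_{0,*}$; the right-hand side $\phi$ is locally Lipschitz and, since $r_{0,*}\,\mu(K)\le M$ by conservation of mass, the solution remains bounded on $[0,\infty)$. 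Applying Lemma~\ref{lemma:lower_diff_ineq} together with the explicit integration of this ODE (as in \cite[Lemma~A.0.3]{mikol2024a}) gives $r_{t,*}\ge y(t)$, where $y(t)$ is precisely the right-hand side of \eqref{eq:min_lower_bound}.

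The main obstacle is the one-sided bookkeeping of the boundary layer $\KminusKeps$: one must check that every term of the ``wrong'' sign either vanishes identically at a minimiser restricted to $K^\varepsilon$ (the outflow over $K^\varepsilon$) or is non-negative and hence discardable in a lower bound (the inflow over $\KminusKeps$), so that what is left is a genuine $O(\varepsilon)$ error, controlled uniformly through \eqref{eq:uniform_boundedness_r}. A secondary, routine technicality is the passage $\varepsilon\to0^+$ inside the time integral, which requires the essential-infimum analogue of Lemma~\ref{lemma:quant_int_estimates} and a dominated-convergence argument using that the integrand is uniformly bounded in terms of $\norm{r_0}_{\LinftyK}$.
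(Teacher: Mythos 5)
Your proposal is correct and follows essentially the same route as the paper's proof: preliminary structural facts (positivity of $\eta$, non-negativity and uniform boundedness of $r$), Scorza--Dragoni to get a compact $K^\varepsilon$, Danskin's theorem at the restricted minimiser (the paper phrases this via $L^-_\varepsilon(t)=\max_{K^\varepsilon}(-r_t)$, you directly track $\ell_\varepsilon(t)=\min_{K^\varepsilon}r_t$, which is equivalent), vanishing of the outflow term over $K^\varepsilon$ at the minimiser, an $O(\varepsilon)$ bound on the boundary-layer outflow plus dropping the non-negative boundary-layer inflow, the mean-value-theorem lower bound via $\alpha'_{*,0}$, and finally Lemma~\ref{lemma:lower_diff_ineq} applied to the Bernoulli inequality after sending $\varepsilon\to0^+$. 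The chain of one-sided estimates and the final comparison argument match the paper step for step.
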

\begin{proof}
As in Theorem \ref{thm:rate_of_sup_norm}, by our assumptions on $\w$, $r^0,\eta^0$ and the use of the upwind interpolation, we have non-negativity preservation of $r$ for $t\geq 0$ by \cite[Proposition 5.2]{Esposito_on_a_class}. Furthermore, let us consider the Lipschitz continuous reparametrization of the absolutely continuous curve $r_t$. By abuse of notation, let us denote $r_t^- := -r_t$ and fix $\varepsilon>0$. By Theorem \ref{thm:scorza} there exists a compact set $K^\varepsilon\subset K$ such the restriction $r^-:[0,T]\times K^\varepsilon\to \R$ is continuous and $\mu(\KminusKeps) < \varepsilon$ for some $\varepsilon>0$. For all $t \geq 0$ define the family of restricted maxima on $K^\varepsilon$, i.e. 
    \[
    L^-_\varepsilon(t) := \max_{x \in K^\varepsilon} r^-_t(x) \ .
    \]
     Since $L^-_{\varepsilon}(\cdot)$ is the pointwise maximum of equi-Lipschitz continuous functions, Theorem \ref{thm:danskin} implies
    \begin{align}\label{eq:max_min}
        \partial_t L^-_\varepsilon(t)  & = \partial_t\max_{x \in \mathbb{V}^-_\varepsilon(t)}  r_t^-(x) = \max_{x \in \mathbb{V}^-_\varepsilon(t)}\partial_t  r_t^-(x)= \max_{x \in \mathbb{V}^-_\varepsilon(t)} -\partial_t r_t(x)\ ,
    \end{align}
    where $\mathbb{V}^-_\varepsilon(t) = \argmax_{x \in K^\varepsilon} r^-_t(x)$. Then, for any vertex $x \in \mathbb{V}^-_\varepsilon(t)$ it holds that
        \begin{align*}
        \partial_t r^-_t(x)  = &\ r_t(x) \int_{\Kepsminus}V_t^+(r_t(x),r_t(x'))\eta_t(x,x')\dd\mu(x') 
    \\
    &  +r_t(x) \int_{\KminusKeps}V_t^+(r_t(x),r_t(x'))\eta_t(x,x')\dd\mu(x') 
    \\
    & - \int_{\Kepsminus}r_t(x')V_t^-(r_t(x),r_t(x'))\eta_t(x,x') \dd\mu(x')
    \\
    &  - \int_{\KminusKeps}r_t(x')V_t^-(r_t(x),r_t(x'))\eta_t(x,x') \dd\mu(x')\ .
    \end{align*}
Note that $r^\varepsilon_{t,*}:= \min_{x \in K^\varepsilon} r_t(x)= - \max_{x \in K^\varepsilon} r^-_t(x)$. Then, in view of \eqref{eq:max_min} we have for $x \in \mathbb{V}_{\varepsilon}^-(t)$ that
\begin{align}\label{eq:evolution_minimum}
    \partial_t r_{t}(x)  = &\ - r_t(x) \int_{\Kepsminus}V_t^+(r_t(x),r_t(x'))\eta_t(x,x')\dd\mu(x')  \nonumber
    \\
    &\  -r_t(x) \int_{\KminusKeps}V_t^+(r_t(x),r_t(x'))\eta_t(x,x')\dd\mu(x') 
    \\
    &\ + \int_{\Kepsminus}r_t(x')V_t^-(r_t(x),r_t(x'))\eta_t(x,x') \dd\mu(x') \nonumber
    \\
    & \ + \int_{\KminusKeps}r_t(x')V_t^-(r_t(x),r_t(x'))\eta_t(x,x') \dd\mu(x')\ \nonumber
\end{align}
which is the evolution of $ r_t(x)$ for $x \in \argmin_{x\in K^\varepsilon} r_t(x)$, since for any function $f$ such that the maximum and the minimum exist, $\argmax -f =\argmin f$. Furthermore, this implies that 
\begin{align}\label{eq:velocity_property}
    V_t^+(r_t(x), r_t(x')) = \bigg(\alpha(r_t(x)) - \alpha(r_t(x'))\bigg)_+ = 0\ ,
\end{align}
for $x \in \mathbb{V}^-_\varepsilon(t)$ and $x'\in \Kepsminus$, since $\mathbb{V}^-_\varepsilon(t) = \argmax_{x \in K^\varepsilon} -r_t(x) = \argmin_{x \in K^\varepsilon} r_t(x)$ and $\alpha$ is monotonic. In particular, the first term vanishes. Furthermore, noting that the last term is non-negative, we can drop it to obtain the following lower bound
\begin{align*}
    \partial_t r_{t}(x)  \geq &\ - r_{t}(x) \int_{\KminusKeps}V_t^+(r_t(x),r_t(x'))\eta_t(x,x')\dd\mu(x')
    \\
    &  + \int_{\Kepsminus}r_t(x')V_t^-(r_t(x),r_t(x'))\eta_t(x,x') \dd\mu(x')
    \\
     \geq &\ -\norm{r_t}_{\LinftyK}C_V\norm{\eta}_{\infty,C_b(\Kdiag)}\varepsilon
     \\
    & + \int_{\Kepsminus}r_t(x')\bigg(\alpha(r_t(x'))-\alpha(r_t(x))\bigg)_+\eta_t(x,x') \dd\mu(x')
    \\
     = &\ -\norm{r_t}_{\LinftyK}C_V\norm{\eta}_{\infty,C_b(\Kdiag)}\varepsilon
      \\
    & + \int_{\Kepsminus}r_t(x')(\alpha(r_t(x'))-\alpha(r_t(x)))\eta_t(x,x') \dd\mu(x')
     \\
     \geq &\ -\norm{r_t}_{\LinftyK}C_V\norm{\eta}_{\infty,C_b(\Kdiag)}\varepsilon
      \\
    & +\eta_*\alpha'_{*,0}\int_{\Kepsminus} r_t(x')(r_t(x')-r_t(x))\dd\mu(x')
\end{align*}
where in the second and third inequality we used that $x \in \mathbb{V}_\varepsilon^-(t)= \argmin_{x \in K^\varepsilon}r_t(x)$, and in the third inequality we also used the monotonicity of the velocity. Since for $x \in \mathbb{V}^-_\varepsilon(t)$, $r_t(x) = r^\varepsilon_{t,*}$ we have, 
\begin{align*}\label{eq:bernoull_eq_2}
    \partial_t r^\varepsilon_{t,*}  = &  -\norm{r_t}_{\LinftyK}C_V\norm{\eta}_{\infty,C_b(\Kdiag)}\varepsilon
      \\
    & +\eta_*\alpha'_{*,0}\int_{K^\varepsilon} r_t(x')(r_t(x')-r^\varepsilon_{t,*})\dd\mu(x')
    \\
     \geq & -\norm{r_t}_{\LinftyK}C_V\norm{\eta}_{\infty,C_b(\Kdiag)}\varepsilon
      \\
    & +\eta_*\alpha'_{*,0}\left(r^\varepsilon_{t,*}\int_{K^\varepsilon}r_{t}(x') \dd \mu(x') - (r^\varepsilon_{t,*})^2\mu(K^\varepsilon)\right)\ .
\end{align*}
 Integrating in time, sending $\varepsilon \to 0^+$ and using Lemma \ref{lemma:quant_int_estimates} we obtain
\begin{equation}\label{eq:bernoull_eq_2}
    r_{t,*} \geq \ r_{0,*} +\int_0^t \eta_*\alpha'_{*,0}( r_{s,*}M - (r_{s,*})^2\mu(K))\dd s ,
\end{equation}
where we used that $\int_{K}r_t(x)\dd\mu(x) = M$ and the preservation of  mass property. Noting that~\eqref{eq:bernoull_eq_2} resembles a Bernoulli ODE, Lemma \ref{lemma:lower_diff_ineq} and \cite[Lemma A.0.3]{mikol2024a} imply 
\begin{align*}
   r_{t,*} \geq \frac{r_{0,*}\eta_*\alpha'_{*,0}Me^{\eta_*\alpha'_*M t}}{\eta_*\alpha'_{*,0}(M+ (e^{\eta_*\alpha'_{*,0}M t}-1)\mu(K)r_{0,*})} \ .
\end{align*} 
\end{proof}

Now we are ready to obtain the long-time behaviour of the upwind \eqref{eq:euler} for pointwise monotonic velocities. 

\begin{theorem}\label{thm:consensus_convergence}
     Fix $\Phi \equiv\Phi_{upwind}$. Let $(r,\eta)\in AC(\R^+,\LinftyK) \times AC(\R^+,C_b(\Kdiag))$ be a solution of \eqref{eq:euler} with a pointwise monotonic velocity field $V:\R^+\times \LinftyK \to \V^{as}(\Kdiag)$ satisfying \eqref{eq:infty_V_bound}, \eqref{eq:infty_velocity_Lipschitz_second_moment} and Assumption \ref{ass:V_differentiable}. Let $\w:\R^+\times\LinftyK\times \Kdiag \to \R$ satisfy \ref{ass:infty_w_continuous}-\ref{ass:infty_omega_symmetric}.    
    Assume $r_0 \geq 0$ and $\int_{K}r_0(x)\dd\mu(x) = M$, and $\eta^0 \in C_b(\Kdiag)$ is positive and symmetric. Then, for $\mu$-a.e. $x \in \Rd$, it holds
\[
\lim_{t \to \infty}r_t(x) = \frac{M}{\mu(K)},
\]
the uniform distribution of mass over $K$. 
\end{theorem}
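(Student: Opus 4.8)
The plan is to trap $r_t(x)$, for $\mu$-a.e.\ $x$, between the essential infimum $r_{t,*}$ and the essential supremum $\norm{r_t}_{\LinftyK}$, and then let $t\to\infty$ using the quantitative bounds of Theorem~\ref{thm:rate_of_sup_norm} and Theorem~\ref{thm:rate_of_inf}, both of which converge to $\frac{M}{\mu(K)}$. If $M=0$, then $r_0\ge 0$ and $\int_K r_0\dd\mu=0$ force $r_0=0$ $\mu$-a.e.; since $\Phi(0,0;\,\cdot\,)=0$, the constant curve $r\equiv 0$ solves~\eqref{eq:euler}, so by uniqueness (Theorem~\ref{thm:well_posedness_euler_infty}) $r_t\equiv 0=\frac{M}{\mu(K)}$ and there is nothing to prove. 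Assume henceforth $M>0$; note that $\eta_*>0$ by Lemma~\ref{lemma:infty_eta_positive} and $\alpha'_{*,0}=\inf_{[0,\norm{r_0}_{\LinftyK}]}\alpha'\ge c>0$ by Assumption~\ref{ass:V_differentiable}, so all constants in the two bounds are strictly positive.

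\textbf{Upper bound.} By~\eqref{eq:max_upper_bound}, for every $t\ge 0$,
\[
\norm{r_t}_{\LinftyK}\le \frac{\norm{r_0}_{\LinftyK}\,\alpha'_{*,0}\eta_*M\,e^{\alpha'_{*,0}\eta_*Mt}}{\alpha'_{*,0}\eta_*\bigl[\norm{r_0}_{\LinftyK}\mu(K)(e^{\alpha'_{*,0}\eta_*Mt}-1)+M\bigr]}\,.
\]
Dividing numerator and denominator by $e^{\alpha'_{*,0}\eta_*Mt}$ and letting $t\to\infty$ (note $\norm{r_0}_{\LinftyK}>0$, else we are in the case $M=0$), the right-hand side tends to $\frac{\norm{r_0}_{\LinftyK}\alpha'_{*,0}\eta_*M}{\alpha'_{*,0}\eta_*\norm{r_0}_{\LinftyK}\mu(K)}=\frac{M}{\mu(K)}$, so $\limsup_{t\to\infty}\norm{r_t}_{\LinftyK}\le\frac{M}{\mu(K)}$.

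\textbf{Lower bound (main obstacle).} When $r_{0,*}>0$, the same computation applied to~\eqref{eq:min_lower_bound} gives $\liminf_{t\to\infty}r_{t,*}\ge\frac{M}{\mu(K)}$. The delicate point is the degenerate case $r_{0,*}=0$, where~\eqref{eq:min_lower_bound} collapses to the trivial $r_{t,*}\ge 0$. I would handle it by revisiting the proof of Theorem~\ref{thm:rate_of_inf} and sharpening its last estimate: instead of bounding $\int_{K^\varepsilon}r_s(x')\bigl(r_s(x')-r^\varepsilon_{s,*}\bigr)\dd\mu(x')\ge r^\varepsilon_{s,*}\bigl(\int_{K^\varepsilon}r_s\dd\mu-r^\varepsilon_{s,*}\mu(K^\varepsilon)\bigr)$, use the Cauchy--Schwarz inequality $\int_{K^\varepsilon}r_s^2\dd\mu\ge\tfrac{1}{\mu(K)}\bigl(\int_{K^\varepsilon}r_s\dd\mu\bigr)^2$; sending $\varepsilon\to0^+$ with Lemma~\ref{lemma:quant_int_estimates}, dominated convergence and conservation of mass then yield
\[
r_{t,*}\ge r_{0,*}+\int_0^t\eta_*\alpha'_{*,0}\Bigl(\tfrac{M^2}{\mu(K)}-M\,r_{s,*}\Bigr)\dd s\,.
\]
Comparing via Lemma~\ref{lemma:lower_diff_ineq} with the solution of the linear ODE $g'=\eta_*\alpha'_{*,0}M\bigl(\tfrac{M}{\mu(K)}-g\bigr)$, $g(0)=r_{0,*}$, we obtain $r_{t,*}\ge\frac{M}{\mu(K)}\bigl(1-e^{-\eta_*\alpha'_{*,0}Mt}\bigr)+r_{0,*}e^{-\eta_*\alpha'_{*,0}Mt}\to\frac{M}{\mu(K)}$, valid whether or not $r_{0,*}$ vanishes. (An alternative is to first show $r_{\tau,*}>0$ for some $\tau>0$ and restart Theorem~\ref{thm:rate_of_inf} on $[\tau,\infty)$, using conservation of mass and $\norm{r_\tau}_{\LinftyK}\le\norm{r_0}_{\LinftyK}$ from~\eqref{eq:uniform_boundedness_r}; the sharpened estimate is cleaner and avoids a separate positivity argument, which is where I expect the subtlety to lie.)

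\textbf{From $L^\infty_\mu$ bounds to $\mu$-a.e.\ convergence.} By the integral identity in~\eqref{eq:sol_r,eta} together with Proposition~\ref{prop:infty_apriori_props}, for $\mu$-a.e.\ $x$ the map $t\mapsto r_t(x)$ is absolutely continuous, and for each fixed $t$ one has $r_{t,*}\le r_t(x)\le\norm{r_t}_{\LinftyK}$ for $\mu$-a.e.\ $x$. Intersecting these full-measure sets over rational $t$ and using continuity in $t$, the double inequality holds for all $t\ge 0$ on a single set of full $\mu$-measure, and then $\limsup_{t\to\infty}\norm{r_t}_{\LinftyK}\le\frac{M}{\mu(K)}\le\liminf_{t\to\infty}r_{t,*}$ squeezes $r_t(x)\to\frac{M}{\mu(K)}$ for $\mu$-a.e.\ $x$, which is the claim.
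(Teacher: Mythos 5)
Your overall plan coincides with the paper's proof: trap $r_t(x)$, for $\mu$-a.e.\ $x$, between $r_{t,*}$ and $\norm{r_t}_{\LinftyK}$ and pass to the limit using Theorems~\ref{thm:rate_of_sup_norm} and~\ref{thm:rate_of_inf}. However, you correctly identify a genuine gap in that argument, and your repair is sound. The paper's proof simply stacks the two rate bounds,
\begin{equation*}
\frac{r_{0,*}\eta_*\alpha'_{*,0}Me^{\eta_*\alpha'_{*,0}M t}}{\eta_*\alpha'_{*,0}\bigl(M+ (e^{\eta_*\alpha'_{*,0}M t}-1)\mu(K)r_{0,*}\bigr)}
\;\le\; r_t(x)\;\le\;
\frac{\norm{r_0}_{\LinftyK}\alpha'_{*,0}\eta_*Me^{\alpha'_{*,0}\eta_*Mt}}{\alpha'_{*,0}\eta_*\bigl[\norm{r_0}_{\LinftyK}\mu(K)(e^{\alpha'_{*,0}\eta_*Mt} - 1) + M\bigr]}\,,
\end{equation*}
and lets $t\to\infty$. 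But, as you observe, the left-hand side is identically $0$ for all $t$ whenever $r_{0,*}=0$, a case not excluded by the hypotheses $r_0\ge 0$ and $\int_K r_0\dd\mu=M$ (take, e.g., $K=[0,1]$, $\mu$ Lebesgue, $r_0(x)=2x$). In that case the squeeze only gives $\liminf_{t\to\infty} r_{t,*}\ge 0$, and the claimed convergence does not follow from the two quantitative theorems as stated.

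Your Cauchy--Schwarz refinement is a correct and essentially minimal fix. Replacing, in the final step of the proof of Theorem~\ref{thm:rate_of_inf}, the lower bound $r_s(x')\ge r^\varepsilon_{s,*}$ by
\[
\int_{K^\varepsilon}r_s^2\dd\mu \;\ge\; \frac{1}{\mu(K)}\Bigl(\int_{K^\varepsilon}r_s\dd\mu\Bigr)^2
\]
turns the differential inequality, after $\varepsilon\to 0^+$, into a comparison against the \emph{linear} ODE $g'=\eta_*\alpha'_{*,0}M\bigl(\tfrac{M}{\mu(K)}-g\bigr)$ rather than the Bernoulli ODE, whose trajectory through $g(0)=0$ is stuck at $0$. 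The linear comparison converges to $M/\mu(K)$ from any nonnegative initial datum, so it simultaneously covers $r_{0,*}>0$ and $r_{0,*}=0$. The passage to the limit in $\varepsilon$ uses the uniform bound $\norm{r}_{\infty,\LinftyK}\le\norm{r_0}_{\LinftyK}$ from~\eqref{eq:uniform_boundedness_r}, which you correctly invoke. Your separate treatment of the trivial case $M=0$ (via $\Phi(0,0;\cdot)=0$ and uniqueness from Theorem~\ref{thm:well_posedness_euler_infty}), and your explicit construction of a single full-$\mu$-measure set on which $r_{t,*}\le r_t(x)\le\norm{r_t}_{\LinftyK}$ holds for all $t\ge 0$ (intersecting over rational $t$ and using the $\mu$-a.e.\ time-continuity of $t\mapsto r_t(x)$ from~\eqref{eq:sol_r,eta} and Proposition~\ref{prop:infty_apriori_props}), are both correct and make explicit measure-theoretic details that the paper leaves implicit.
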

\begin{proof}
    For $\mu$-a.e. $x\in\Rd$ and $t\geq 0$ we have, by~\eqref{eq:max_upper_bound} and \eqref{eq:min_lower_bound}, that
    \begin{align*}
        \frac{r_{0,*}\eta_*\alpha'_{*,0}Me^{\eta_*\alpha'_{*,0}M t}}{\eta_*\alpha'_{*,0}(M+ (e^{\eta_*\alpha'_{*,0}M t}-1)\mu(K)r_{0,*})}  
       &\leq  r_{t,*}
       \\
       & \leq r_t(x) 
       \\
       & \leq \norm{{r_t}}_{{\LinftyK}}\
        \\
        & \leq \frac{\norm{r_0}_{\LinftyK}\alpha'_{*,0}\eta_*Me^{\alpha'_{*,0}\eta_*Mt}}{\alpha'_{*,0}\eta_*\left[\norm{r_0}_{\LinftyK}\mu(K)(e^{\alpha'_{*,0}\eta_*Mt} - 1) + M\right]}\ ,
    \end{align*}
    and letting $t \to \infty$ yields the result.     
\end{proof}

As a direct consequence of the previous theorem and the results in~Section~\ref{sec:graph_CE} we have the following theorem for the long-time asymptotics of the $\mu$-monokinetic solutions to~\eqref{eq:vlasov_equation}.
\begin{corollary}
Let $\Phi \equiv\Phi_{upwind}$ and let $(r,\eta)\in AC(\R^+,\LinftyK) \times AC(\R^+,C_b(\Kdiag))$ be a solution of \eqref{eq:euler} with a pointwise monotonic velocity field $V:\R^+\times \LinftyK \to \V^{as}(\Kdiag)$ satisfying \eqref{eq:infty_V_bound}, \eqref{eq:infty_velocity_Lipschitz_second_moment} and Assumption \ref{ass:V_differentiable}. Let $\w:\R^+\times\LinftyK\times \Kdiag \to \R$ satisfy \ref{ass:infty_w_continuous}-\ref{ass:infty_omega_symmetric}.    
    Assume $r_0 \geq 0$ and $\int_{K}r_0(x)\dd\mu(x) = M$, and $\eta^0 \in C_b(\Kdiag)$ is positive and symmetric. Then, the $\mu$-monokinetic solution to~\eqref{eq:vlasov_equation} weakly-* converges to $\bar\sigma=\delta_{M/\mu(K)}\otimes\mu$, as $t\to\infty$.
\end{corollary}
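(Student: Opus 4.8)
The plan is to unwind the definition of the $\mu$-monokinetic solution, test it against an arbitrary bounded continuous function, and then feed in the pointwise convergence already obtained in Theorem~\ref{thm:consensus_convergence}. First I would recall that, by the construction in Section~\ref{sec:graph_CE} (cf.\ the discussion preceding~\eqref{eq:vlasov_equation} together with Theorem~\ref{thm:vlasov_well_posedness}), the curve $t\mapsto \sigma_t:=\delta_{r_t}\otimes\mu$ is precisely the $\mu$-monokinetic solution of~\eqref{eq:vlasov_equation} associated with the solution $(r,\eta)$ of~\eqref{eq:euler}. Hence, using~\eqref{eq:mu_monokinetic}, for every $\varphi\in C_b(\R\times\Rd)$ and every $t\ge0$,
\[
\int_{\R\times\Rd}\varphi(\xi,x)\,\dd\sigma_t(\xi,x)=\int_{\Rd}\varphi\big(r_t(x),x\big)\,\dd\mu(x).
\]

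Next I would invoke Theorem~\ref{thm:consensus_convergence}, which yields $r_t(x)\to \frac{M}{\mu(K)}$ as $t\to\infty$, for $\mu$-a.e.\ $x\in\Rd$. Since $\varphi$ is continuous, $\varphi(r_t(x),x)\to\varphi\big(\tfrac{M}{\mu(K)},x\big)$ for $\mu$-a.e.\ $x$, and $|\varphi(r_t(x),x)|\le\norm{\varphi}_{C_b(\R\times\Rd)}$, which is $\mu$-integrable because $\mu\in\P(\Rd)$. The Dominated Convergence Theorem then gives
\[
\lim_{t\to\infty}\int_{\Rd}\varphi\big(r_t(x),x\big)\,\dd\mu(x)=\int_{\Rd}\varphi\Big(\tfrac{M}{\mu(K)},x\Big)\,\dd\mu(x)=\int_{\R\times\Rd}\varphi\,\dd\big(\delta_{M/\mu(K)}\otimes\mu\big),
\]
the last equality being~\eqref{eq:mu_monokinetic} applied to the constant density $\frac{M}{\mu(K)}$. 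As $\varphi\in C_b(\R\times\Rd)$ was arbitrary, this is exactly the claimed convergence $\sigma_t\to\delta_{M/\mu(K)}\otimes\mu$ weakly-$*$.

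I do not anticipate any genuine difficulty: the substantive content is Theorem~\ref{thm:consensus_convergence}, and what remains is a routine dominated-convergence argument. One minor remark worth including is that by~\eqref{eq:uniform_boundedness_r} and the non-negativity preservation used there, every $\sigma_t$ is supported in the fixed compact set $\big[0,\norm{r_0}_{\LinftyK}\big]\times K$; consequently the family $(\sigma_t)_{t\ge0}$ is tight, so narrow and weak-$*$ convergence coincide and it is legitimate to test only against $C_b(\R\times\Rd)$ (equivalently, against continuous functions on that compact set). The only care needed is the measurability of $x\mapsto\varphi(r_t(x),x)$, which is immediate since $r_t\in\LinftyK$ is $\mu$-measurable.
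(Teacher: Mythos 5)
Your proof is correct and follows essentially the same route as the paper: identify $\sigma_t=\delta_{r_t}\otimes\mu$ as the $\mu$-monokinetic solution, unwind the pairing against a test function as $\int_K \varphi(r_t(x),x)\,\dd\mu(x)$, invoke Theorem~\ref{thm:consensus_convergence} for $\mu$-a.e.\ pointwise convergence, and pass to the limit by dominated convergence. The only cosmetic difference is that the paper tests against $C_0(\R\times K)$ (which is all that weak-$*$ convergence requires) whereas you test against $C_b(\R\times\Rd)$ and justify the equivalence via uniform compact support; both are fine, and your added remarks about measurability and tightness are harmless elaboration.
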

\begin{proof}
The $\mu$-monokinetic solution to~\eqref{eq:vlasov_equation} is given by $\sigma_t=\delta_{r_t}\otimes\mu$. One can check, indeed, that $\sigma_t$ satisfies~\eqref{eq:mu_ae_sol_concept_vlasov} for $\varphi\in C_c^1([0,+\infty)\times \R\times K)$. Due to~Theorem~\ref{thm:consensus_convergence}, for any $f\in C_0(\R\times K)$ it holds
\[
\lim_{t\to\infty}\int_{\R\times K}f(\xi,x)\dd \delta_{r_t(x)}\otimes\mu(\xi,x)=\lim_{t\to\infty}\int_{K}f(r_t(x),x)\dd \mu(x)=\int_{K}f(M/\mu(K),x)\dd \mu(x),
\]
whence the thesis.
\end{proof}

\subsection*{Acknowledgements}
The authors are grateful to Ben Hambly and Markus Schmidtchen for their valuable comments and insightful remarks. The authors were supported by the Advanced Grant Nonlocal-CPD (Nonlocal PDEs for Complex Particle Dynamics: Phase Transitions, Patterns and Synchronization) of the European Research Council Executive Agency (ERC) under the European Union’s Horizon 2020 research and innovation programme (grant agreement No. 883363). AE acknowledges partial support by the EPSRC grant number EP/T022132/1 and by the InterMaths Network (www.intermaths.eu). LM was mainly supported by the EPSRC Centre for Doctoral Training in Mathematics of Random Systems: Analysis, Modelling and Simulation (EP/S023925/1).

\subsection*{Availability of data and materials}

Data sharing not applicable to this article as no datasets were generated or analysed during the current study.

\bibliographystyle{abbrv}
\bibliography{biblio}

\end{document}